\documentclass[11 pt,a4paper]{article}

\usepackage{amscd}
\usepackage{amsfonts,amsmath,amssymb,amsthm}
\usepackage{array}
\usepackage{enumerate}
\usepackage{graphicx}
\usepackage[all]{xy}
\usepackage[colorlinks=true]{hyperref}
\usepackage{fullpage}
\usepackage{tikz}


\newtheorem{thm}{Theorem}[section]

\newtheorem*{thm-prim}{Theorem~\ref{thm:prim}}
\newtheorem*{thm-nprim}{Theorem~\ref{thm:nprim}}

\newtheorem{lem}[thm]{Lemma}
\newtheorem{prop}[thm]{Proposition}
\newtheorem*{prop:prim->min}{Proposition~\ref{prop:prim->min}}

\theoremstyle{definition}
\newtheorem{defn}[thm]{Definition}
\newtheorem{example}[thm]{Example}

\newtheorem{rem}[thm]{Remark}



\newcommand{\C}{\mathbb{C}}
\newcommand{\N}{\mathbb{N}}
\newcommand{\R}{\mathbb{R}}

\newcommand{\Z}{\mathbb{Z}}

\newcommand{\M}{\mathbb{M}}
\newcommand{\E}{\mathbb{E}}
\newcommand{\Hyp}{\mathbb{H}}

\newcommand{\B}{\mathsf{B}}
\newcommand{\supp}{\text{supp}}

\newcommand{\im}{\text{Im}}

\newcommand{\val}{\text{val}}

\newcommand{\core}{\text{Core}}
\newcommand{\T}{\mathcal{T}}
\newcommand{\diam}{\text{diam}}

\newcommand{\A}{\mathcal{A}}
\newcommand{\type}{\text{type}}
\newcommand{\isom}{\text{Isom}}

\renewcommand {\epsilon}{\varepsilon}
\renewcommand {\phi}{\varphi}

\renewcommand {\leq}{\leqslant}
\renewcommand {\geq}{\geqslant}

\def\strutdepth{\dp\strutbox}
\def \ss{\strut\vadjust{\kern-\strutdepth \sss}}
\def \sss{\vtop to \strutdepth{
\baselineskip\strutdepth\vss\llap{$\diamondsuit\;\;$}\null}}

\def\strutdepth{\dp\strutbox}
\def \sst{\strut\vadjust{\kern-\strutdepth \ssss}}
\def \ssss{\vtop to \strutdepth{
\baselineskip\strutdepth\vss\llap{$\spadesuit\;\;$}\null}}

\def\strutdepth{\dp\strutbox}
\def \ssh{\strut\vadjust{\kern-\strutdepth \sssh}}
\def \sssh{\vtop to \strutdepth{
\baselineskip\strutdepth\vss\llap{$\heartsuit\;\;$}\null}}

\setcounter{tocdepth}{1} 

\begin{document}
\title{Geometric realizations of $2$-dimensional substitutive tilings}
\date{}

\author{Nicolas B\'edaride \footnote{LATP  UMR 7353, Universit\'e Paul C\'ezanne, 
avenue escadrille Normandie Niemen, 
13397 Marseille cedex 20, France.
{\it E-mail address:} {\tt nicolas.bedaride@univ-amu.fr}
}
\& Arnaud Hilion
\footnote{LATP  UMR 7353, Universit\'e Paul C\'ezanne, avenue escadrille Normandie Niemen, 
13397 Marseille cedex 20, France.
{\it E-mail address:} {\tt arnaud.hilion@univ-amu.fr}}
}

\maketitle


\begin{abstract}
We define 2-dimensional topological substitutions.
A tiling of the Euclidean plane, or of the hyperbolic
plane, is substitutive if the underlying 2-complex
can be obtained by iteration of a 2-dimensional 
topological substitution.
We prove that there is no primitive substitutive tiling of 
the hyperbolic plane $\mathbb{H}^2$.  
However, we give an example of a substitutive tiling of 
$\Hyp^2$ which is non-primitive.  
\end{abstract}

\tableofcontents

\section{Introduction}

Let $\M$ be the Euclidean plane $\E^2$ or the hyperbolic
plane $\Hyp^2$, with a given base point $O$ called the 
origin of $\M$.
The space of tilings \cite{Bel.Ben.Gam.06,Sad.08}
defined by a(typically finite) set of tiles $\textsf{T}$ of $\M$
is the set of all the tilings of $\M$ that can be 
obtained using the translates 
of $\textsf{T}$ by a given subgroup $\Gamma$
of the group $\isom(\M)$ of isometries of $\M$.
The group $\Gamma$ naturally acts on this set.
This set is given a metric topology which says that two tilings 
$x_1$ and $x_2$ are close to each other if there exists a ball 
$B$ of large radius centered at the origin $O$ of $\M$ and 
an element $g\in\isom(\M)$ which  does 
not move $B$ much, 
such that the restrictions of $x_1$, $g x_2$ 
to $B$ are equal. Details are given in Section~\ref{spacetiling}; 
in particular Proposition \ref{lem:distance} gives an explicit
distance on a space of tilings.

A space of tilings, with this topology and the action of 
$\Gamma$, can be viewed as a dynamical system.
If $x$ is a tiling in a given space of tilings, $\Omega(x)$
will denote the hull of $x$, which is the closure of the
$\Gamma$-orbit of $x$ in the space of tilings.

A finite connected union of tiles in $x$ is called a patch of the tiling $x$.
A tiling $x$ of $\M$ is repetitive if for any patch $P$ of $x$, 
there exists some $r>0$ such that in any ball of $\M$ of
radius $r$, one can see the translate 
$gP$ of $P$ by some element
$g\in\Gamma$.
The minimality of the dynamical
system $(\Omega(x),\Gamma)$ 
(i.e. the fact that every $\Gamma$-orbit is dense in $\Omega(x)$)
is related to the property
of repetitivity of the tiling $x$
by the following proposition, which is the analogue in dimension $2$ 
of a rather well known fact in dimension 1 (i.e. for subshifts). In dimension $2$ it 
is known in several cases (in particular, if 
$\M=\E^2$ and $\Gamma$ is the set of translations of $\E^2$).
We derive below (see Proposition \ref{prop:repet}) the statement in a context adapted to our purpose.

\begin{prop}[Gottschalk's Theorem]
Let $x$ be a tiling, and let 
$\Omega(x)$ be its hull.
\begin{enumerate}[(i)]
\item  If the tiling $x$ is repetitive, then the dynamical system 
          $(\Omega(x),\Gamma)$ is minimal.
\item If the dynamical system $(\Omega(x),\Gamma)$ is minimal, 
         and $\Omega(x)$ is a compact set, then the tiling $x$ is repetitive.
\end{enumerate}
\end{prop}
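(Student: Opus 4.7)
Both directions rest on the definition of the topology on $\Omega(x)$: two tilings are $\epsilon$-close exactly when one can be moved by an isometry very close to the identity so as to coincide with the other on a large ball centered at $O$. The strategy is to translate closeness of orbits into statements about which patches appear where in a given tiling.

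For (i), assume $x$ is repetitive. I would first observe that every $y \in \Omega(x)$ contains every patch of $x$: if $P$ is a patch of $x$, repetitivity yields $r > 0$ such that a $\Gamma$-translate of $P$ occurs in every ball of $M$ of radius $r$ in $x$; any $y \in \Omega(x)$ agrees with some translate $g_n x$ on an arbitrarily large ball around $O$ (up to a small isometry), so $y$ too contains a $\Gamma$-translate of $P$. To conclude minimality, pick $y, z \in \Omega(x)$ and $\epsilon > 0$: the restriction of $z$ to a sufficiently large ball $B$ around $O$ is a patch $Q$ of $z$, which (since $z$ itself is a limit of translates of $x$) is a $\Gamma$-translate of a patch of $x$; by the preliminary observation $Q$ appears in $y$, so a suitable $g \in \Gamma$ brings $gy$ into agreement with $z$ on $B$ up to a small isometry, whence $d(gy, z) < \epsilon$.

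For (ii), assume $(\Omega(x), \Gamma)$ is minimal and $\Omega(x)$ compact. Fix a patch $P$ of $x$ and set
\[
U_P = \{\, y \in \Omega(x) \mid y \text{ contains } P \text{ in a ball around } O \,\};
\]
this is an open neighborhood of $x$. Minimality forces, for every $y\in\Omega(x)$, some $g \in \Gamma$ with $gy \in U_P$, i.e.\ $\Omega(x) = \bigcup_{g \in \Gamma} g^{-1} U_P$, and compactness extracts a finite subcover indexed by $\{g_1, \dots, g_n\} \subset \Gamma$. Applied to $y = \gamma x$ for each $\gamma \in \Gamma$, this says that some $g_i \gamma x$ lies in $U_P$, so $x$ contains the $\Gamma$-translate $(g_i \gamma)^{-1} P$ of $P$ in a ball around $(g_i \gamma)^{-1}(O)$. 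Since $\Gamma$ acts on $\M$ with bounded fundamental domain, each orbit $\Gamma \cdot g_i^{-1}(O)$ is relatively dense in $\M$, and the union over the finitely many $i$ still is; this gives the uniform radius $r$ witnessing repetitivity of $x$ for the patch $P$.

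The main obstacle I expect is the careful bookkeeping in part (ii): the open neighborhood $U_P$ is defined only up to small isometries near the identity, so the passage from ``$g_i \gamma x \in U_P$'' to ``$x$ contains a $\Gamma$-translate of $P$ near $(g_i \gamma)^{-1}(O)$'' must absorb a small isometric error coming from the metric on tilings. One must also verify that the framework developed in Section~\ref{spacetiling} does provide a relatively dense $\Gamma$-orbit of $O$ in $\M$; this is essentially the cocompactness condition that makes the hull of $x$ a compact metric space in the first place, and it is what allows the finite-cover extraction to translate into a geometric radius.
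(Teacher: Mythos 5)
Your proof is correct. For part (i) you follow essentially the same route as the paper: you show that every tiling in $\Omega(x)$ contains every protopatch of $x$ (the paper phrases this as $\mathcal{L}(y)=\mathcal{L}(x)$ for every $y\in\Omega(x)$, items (i)--(ii) of its Proposition~\ref{prop:repet}) and then conclude that any two points of the hull lie in each other's orbit closures. For part (ii), however, you take a genuinely different path. The paper argues by contraposition and sequential compactness: if $x$ is not repetitive it extracts a convergent subsequence $g_n^{-1}x\to y$ such that $y$ misses some protopatch $\hat P$ of $x$, so that $\Omega(y)$ is a nonempty proper closed invariant subset of $\Omega(x)$, contradicting minimality. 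You instead give the direct covering argument: the cylinder set $U_P$ is open (which, as you anticipate, requires checking that the comparison isometries in the metric of Section~\ref{spacetiling} lie in $\Gamma$ and are close to the identity on a large ball, so that they carry a $\Gamma$-copy of $P$ near $O$ to another such copy), minimality makes $\{g^{-1}U_P\}_{g\in\Gamma}$ an open cover, and the finite subcover indexed by $g_1,\dots,g_n$ yields an explicit uniform radius $R=\rho+\max_i d(g_i^{-1}(O),O)$. Your approach produces an effective radius and avoids passing to a limit tiling; the paper's is shorter because it reuses its language characterization $y\in\Omega(x)\Leftrightarrow\mathcal{L}(y)\subseteq\mathcal{L}(x)$. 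One simplification available to you: the formal definition of repetitivity in Section~\ref{subsec:min} only asks that $\hat P$ occur in the balls $gB$ for $g\in\Gamma$, i.e.\ balls centered on the orbit $\Gamma O$, so your final appeal to cocompactness of $\Gamma$ (relative density of $\Gamma\cdot O$ in $\M$) is unnecessary --- the finite bound $\max_i d(g_i^{-1}(O),O)$ already closes the argument, and the paper nowhere assumes $\Gamma$ cocompact.
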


Examples of repetitive tilings of $\E^2$ are given by periodic
tilings, i.e. tilings invariant under 
some group isomorphic to $\Z^2$
acting discretely and cocompactly on $\E^2$ by translations. 
The hull of such a tiling is homeomorphic to a torus.
More interesting examples are given by aperiodic tilings (which
are not periodic). This is the case, for instance, 
for 
the famous
tiling given by Penrose in the early 70's, \cite{Penr.84}.

Since then, a lot of examples of such aperiodic tilings of $\E^2$ 
have been produced, based on several recipes at our disposal: 
the ``cut and project'' method for instance,
but of more interest regarding the subject of this article,
the ``substitutive examples".
The existing terminology concerning the latter is not very well established: 
we have to accomodate to 
several alternative definitions, all having
their 
own interest, see \cite{Priebe.08, Robin.04} for detailed discussion.

These substitutive tilings share the fact that their
construction involves 
a 
homothety $H$ of $\E^2$
of coefficient $\lambda>1$. 
Typically, the situation is 
as follows:
We have a finite number of polygons 
$T_1,\dots,T_d$ of $\E^2$,
such that their images by the homothety $H$ can
be tiled by translated images of $T_1,\dots,T_d$ of $\E^2$
(see Figure \ref{CHAISE} for a famous example).

\begin{figure}[ht] 
  \begin{center} 

  \includegraphics[width=8cm]{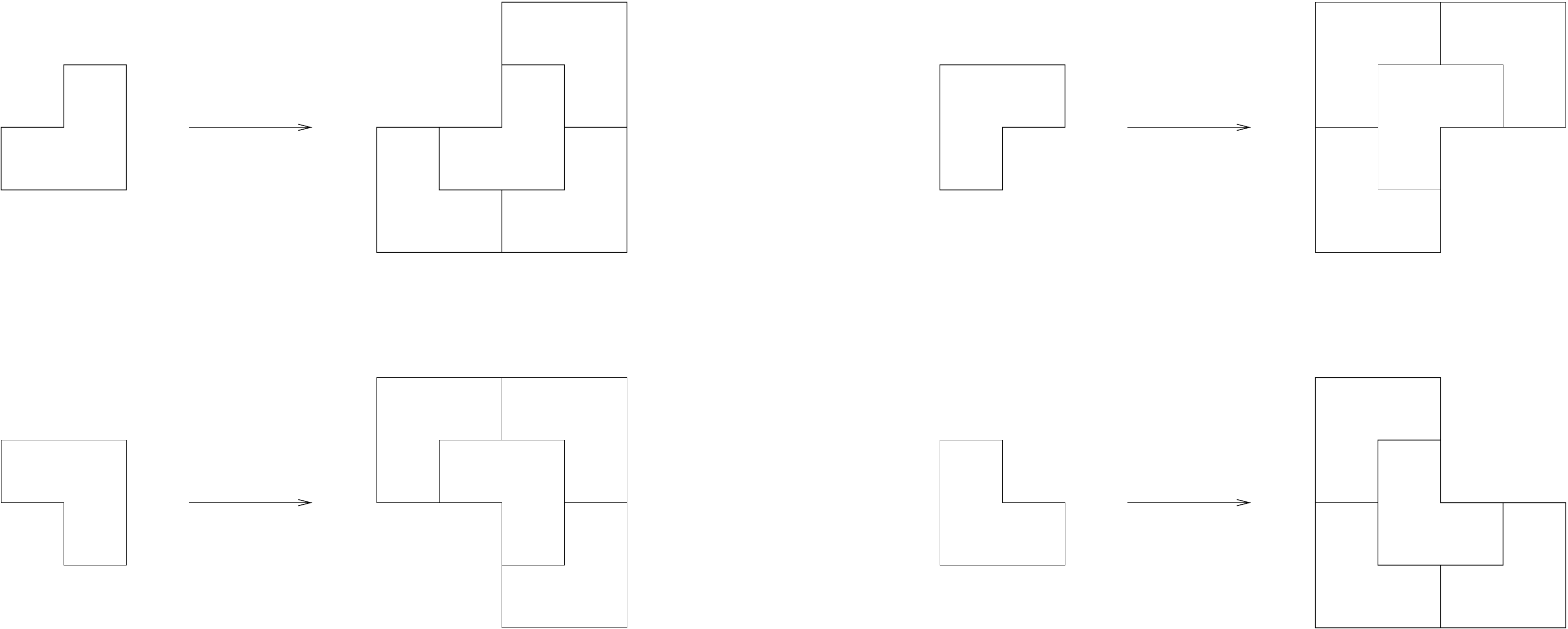}
  \end{center}
  \caption{The ``chair substitution'' is defined on 4 tiles
  $T_1$, $T_2$, $T_3$ and $T_4$, each of which differs 
  from the previous one by a rotation of angle $\frac{\pi}{2}$. 
  The image of a tile $T_i$ by
  the homothety of coefficient 2 is tiled by 4 translated copies of
  the  $T_i$.}
  \label{CHAISE}
\end{figure}

Iteration of this process leads to bigger and bigger
parts $H^k(T_1)$ of the plane tiled by translated images 
of $T_1,\dots,T_d$
(see Figure \ref{chaise-iteree}), and, up to extraction of a subsequence,
we obtain a tiling of the whole plane $\E^2$.

\begin{figure}[ht]
  \begin{center} 
 \includegraphics[width= 6cm]{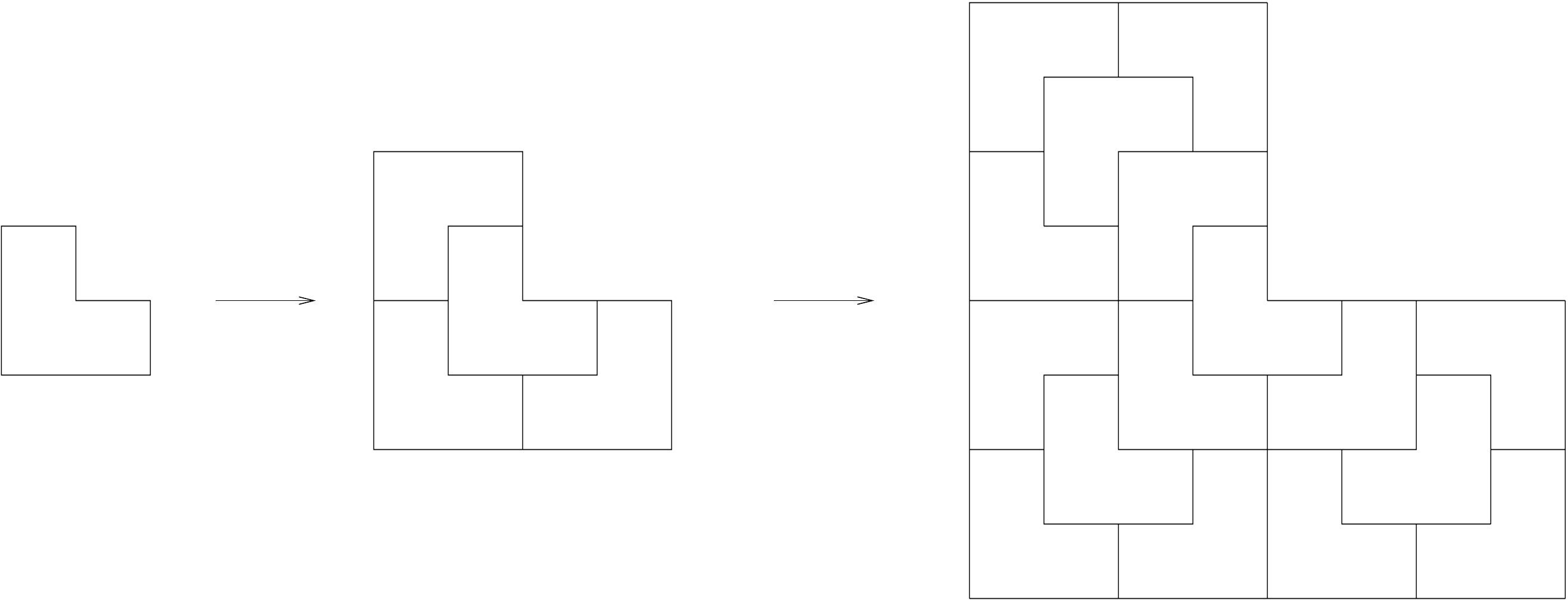}
  \end{center}
  \caption{When iterating the ``chair substitution'' on a tile,
  bigger and bigger parts of the plane are covered by tiles.}
  \label{chaise-iteree}
\end{figure}

Underlying these constructions, there are topological 
(or combinatorial) objects. Indeed, if we forget the metric
data, there 
remains a finite set of 2-cells $T_1,\dots,T_d$
and a map $\sigma$ which associates to any of these cells, say $T_i$,
a patch $\sigma(T_i)$, homeomorphic to a disc, obtained 
by gluing some copies of the $T_j$ along edges.
Moreover, such a map $\sigma$ can be consistently 
iterated. This is what we would like to call a
2-dimensional topological substitution:
for a formal definition
see Sections~\ref{sec:top} and \ref{sec:top-subst}, 
and 
in particular Definition \ref{def:top-sub}.
By iterating a topological substitution, we obtain
a 2-complex
$X$ homeomorphic to the plane,
each 2-cell of which is labelled
by some $T_i$.

A tiling $x$ of $\M$ is substitutive if the underlying 2-complex
can be obtained by iterating a 2-dimensional
topological substitution $\sigma$.
Under some 
primitivity hypothesis (see Definition \ref{def:pureprim}) 
on the substitution $\sigma$, the hull of $x$
is minimal:

\begin{prop:prim->min}
If $x$ is a primitive substitutive tiling of $\M$, 
then the hull $(\Omega(x),\Gamma)$ is minimal.
\end{prop:prim->min}

The idea of defining inflation processes on topological complexes is not new. 
For instance, it has been developed by Previte \cite{Prev-98} or 
Priebe Frank \cite{Frank-03} in dimension $1$ ({\it i.e.} for graphs). 
The version by Priebe Frank which focuses in particular on planar 
graphs, can be seen as a kind of dual construction of our topological substitution.

\medskip

The main question we investigate in this 
paper 
is:
{\em Does there exist 
a subsitutive tiling of the hyperbolic 
plane $\Hyp^2$ ?}

\medskip

The fact that there is no homothety
of ratio $\lambda>1$ in $\Hyp^2$ leads us to 
reformulate
the question as 
follows: 
Given a topological substitution $\sigma$ and a 2-complex
$X$ homeomorphic to the plane obtained by iteration of
$\sigma$, is it possible to geometrize $X$ as a tiling of
$\Hyp^2$ ? 
That is, 
can one 
associate 
to each cell $T_i$ of the
substitution a polygon $\lfloor T_i \rfloor$ of $\Hyp^2$
such that the resulting metric 
space, 
obtained by gluing
the polygons as prescribed by $X$, is isometric to the 
hyperbolic plane $\Hyp^2$ ?
We prove:

\begin{thm-prim}
There does not exist 
a 
primitive substitutive tiling of the 
hyperbolic plane $\Hyp^2$. 
\end{thm-prim}

Thus, to obtain examples of aperiodic tilings of the
hyperbolic plane $\Hyp^2$, one can not simply use, as in 
the Euclidean plane $\E^2$, a primitive substitution
(compare Proposition \ref{prop:prim->min}).
However, one can exhibit a strongly aperiodic set of tiles,
(i.e. a finite set of tiles in $\Hyp^2$ 
such that $\Hyp^2$ can be tiled by their isometric images, but no such tiling of $\Hyp^2$ is invariant under any
isometry $g\in\isom(\Hyp^2)$, $g\neq Id$).
The first example of such a strongly aperiodic set of tiles
of $\Hyp^2$ was 
proposed by Goodman-Strauss \cite{Good.05},
and is based on ``regular production 
systems'', 
which are quite elaborated objects.

Given a primitive substitution $\sigma$ and a tile $T$, 
the proof of Theorem~\ref{thm:prim}
relies on an estimate of the growth rates of  
both
the number of tiles in $\sigma^n(T)$ and the number of 
tiles in the boundary of $\sigma^n(T)$. These growth rates 
contradict the linear isoperimetric inequality which 
characterizes hyperbolicity. 

Nevertheless, we obtain:

\begin{thm-nprim}
There exists a (non-primitive) substitutive tiling of the 
hyperbolic plane $\Hyp^2$. 
\end{thm-nprim}

This theorem is proved by giving in Section~\ref{subsec:subst-pav} an explicit example of 
a non-primitive 
topological substitution. In the 
unlabelled 
complex obtained 
by iterating $\sigma$, each vertex is of valence three and 
each face is  
a heptagon. 
Thus it can be geometrized as the regular tiling of $\Hyp^2$ 
by regular heptagons of angle $\frac{2\pi}{3}$.
The trick of how 
to define a convenient substitution consists 
of 
using 
a tile $T$ as a source: at each iteration, $T$ produces an annulus 
surrounding $T$. Each tile in the annulus grows as an 
unidimensional complex, in such a way that the 
$n$-th image of the 
annulus is still an annulus perfectly surrounding the 
$(n-1)$-st
image, the whole picture forming a kind of ``target board".

The construction is a little bit tricky, and
we do not know, in general, which regular tilings of $\Hyp^2$ can be 
obtained in such a way. For instance, can we obtain the tiling by regular
squares of angle $\frac{2\pi}{5}$?

\medskip

The paper is organized as follows.
Section \ref{sec:tiling} gathers basic notions about tilings.
We then focus on spaces of tilings in Section \ref{sec:dynamics}.
Section \ref{sec:top} is devoted to the definition of 
a topological 
pre-substitution: we make explicit the condition, called compatibility, 
which 
a pre-substitution
must satisfy 
so that one can iterate it consistently.
Then we define topological 
substitutions 
in Section \ref{sec:top-subst}, 
and explain what a substitutive tiling is. 
Primitive topological 
substitutions 
are studied in Section \ref{sec:prim}, 
where we prove that primitivity implies minimality. 
We give the proof of Theorem \ref{thm:prim} in Section \ref{sec:hyp}. 
In Section~\ref{sec:bound} we explain how to check that the valence 
of the vertices in the complex obtained from a given topological 
substitution is uniformly bounded, which is a necessary condition on the complex
for it to be geometrizable.
Finally we give an example of a substitutive tiling of the hyperbolic 
plane in Section~\ref{sec:subst-pav}, 
thus proving Theorem~\ref{thm:nprim}. We give in an appendix proofs of propositions stated in Section~\ref{sec:dynamics}. 

\medskip

{\em Acknowledgment: We thank Martin Lustig for remarks and comments on this paper. This work has been supported by the Agence Nationale de la Recherche -- ANR-10-JCJC 01010.}


\section{Tilings}\label{sec:tiling}
In this section we generalize standard notions on tiling to a general metric space. 
For the references about this material we refer the reader to \cite{Robin.04}.
\subsection{Background}\label{sec:back}
Let $(M,d)$ be a complete metric space, equipped with a distinguished point $O$, called the origin. We denote by $\isom (M)$ the group of isometries of 
$M$,
and by $\Gamma$ a subgroup of $\isom(M)$.

A {\bf tile} is a compact subset of $M$ which is the 
closure of its interior
(in most of the basic examples, a tile is homeomorphic to a closed ball). 
We denote by $\partial{T}$ the boundary of a tile, 
i.e. $\partial{T}=T\smallsetminus \mathring{T}$.
Let $\A$ be a finite set of {\bf labels}. 
A {\bf labelled tile} is a pair $(T,a)$ where
$T$ is a tile and $a$ an element of $\A$.

Two labelled tiles $(T,a)$ and $(T',a')$ are {\bf equivalent} if 
$a=a'$ and there exists an isometry $g\in\Gamma$ 
such that $T'=gT$. 
An equivalence class of labelled tiles is called a {\bf prototile}.
We stress that a prototile depends on the choice of the subgroup
$\Gamma$ of $\isom(M)$.
In many cases, one does not need the labelling 
to distinguish different prototiles, for example if we consider a 
family of prototiles such that the tiles in two different prototiles 
are not isometric.

\begin{defn}
A tiling $(M,\Gamma,\mathcal{P},\textsf{T})$ of the space $(M,d)$ 
modelled on a set of prototiles $\mathcal{P}$,
is a set $\textsf{T}$ of tiles, each belonging to an element of 
$\mathcal{P}$, such that:
\begin{enumerate}[(i)]
\item $M=\displaystyle\bigcup_{T\in\textsf{T}}T,$
\item two distinct tiles of $\textsf{T}$ have disjoint interiors.
\end{enumerate}
\end{defn}

A connected finite union (of labelled) tiles is called a (labelled) {\bf patch}. 
Two finite patches are 
{\bf equivalent} if they have the same number $k$ of tiles and these 
tiles can be indexed $T_1,\dots,T_k$ and $T'_1,\dots,T'_k$, 
such that there exists $g\in 
\Gamma$ with $T'_i=g T_i$ for all $i\in\{1,\dots,k\}$. 
Two labelled patches are {equivalent} if moreover $T_i,T'_i$ 
have same labelling for all $i\in \{1,\dots,k\}$.
An equivalence class of patches is called a {\bf protopatch}.

The {\bf support} of a patch $P$, denoted by $\supp(P)$, 
is the subset of $M$ which
consists of points belonging to a tile of $P$.
A {\bf subpatch} of a patch $P$ is a patch which is a subset of the patch $P$.

Let $x=(M,\Gamma,\mathcal{P},\textsf{T})$ be a tiling, and
let $A\subseteq  M$ be a subset of $M$.
A patch $P$ {\bf occurs in $A$} if there exists 
$g\in\Gamma$ such that for any tile $ T\in P$,
$gT$ is a tile of $\textsf{T}$ which is contained in $A$:
$$gT\in \textsf{T}\;\; \text{ and }  \;\;\supp(gT)\subseteq A.$$
We note that any patch in the protopatch $\hat{P}$ defined by $P$ occurs in $A$. We says that the protopatch $\hat{P}$ {\bf occurs} in $A$.\\
The {\bf language} of $x$, denoted $\mathcal{L}(x)$, is the set of protopatches of $x$.

If $M=\mathbb{M}$, the euclidean plane or the hyperbolic plane, a polygon in $M$ is a (not necessarily convex) compact subset homeomorphic to a ball
and such that its boundary is a finite union of geodesic segments in 
$\mathbb{M}$. When all tiles of $\mathcal{P}$ are polygons in $\mathbb{M}$, the 
tiling is called a {\bf polygonal tiling}.

\subsection{The $2$-complex defined by a tiling of the plane}\label{section:complex}
For this part let us assume $M=\mathbb{M}$.
Let $X$ be a $n$-dimensional CW-complex 
(see, for instance, \cite{Hat.02} for basic facts about CW-complexes). 
We denote by $|X|$ the number of $n$-cells in $X$. The subcomplex 
of $X$ which consists of cells of dimension at most $k\in\{0,\dots,n\}$ is denoted by $X^k$.
In this paper, we will only consider CW-complexes of dimension
at most 2. The 0-cells will be called vertices, the 1-cells edges
and the 2-cells faces. 

Let $(\mathbb{M},\Gamma,\mathcal{P},\textsf{T})$ be a tiling of the plane. 
We suppose that the tiles are connected.
This tiling defines naturally a $2$-complex 
$X=X_{(\mathbb{M},\Gamma,\mathcal{P},\textsf{T})}$ in the following way. The set $X^0$ of vertices of $X$ is the set of points in $\mathbb{M}$ which belong to 
(at least) three tiles of $\mathcal{T}$. Each connected component of the set 
$\displaystyle\bigcup_{T\in\textsf{T}}\partial{T}\setminus X^0$ 
is an open arc. Any closed edge of $X$ is the closure of one of these arcs 
(and conversily).
Such an edge $e$ is glued to the endpoints $x,y\in X^0$
of the arc. The set of faces of $X$ is the set of
tiles of $(\mathbb{M},\Gamma,\mathcal{P},\textsf{T})$.
We remark that the boundary of a tile is a subcomplex of $X^1$ homeomorphic 
to the circle $\mathbb{S}^1$: this gives the gluing of the corresponding face on the 
1-skeleton. If $(\mathbb{M},\Gamma,\mathcal{P},\textsf{T})$ is a polygonal tiling, then it is a geometric realization of $X_{(\mathbb{M},\Gamma,\mathcal{P},\textsf{T})}$ such that the edges are realized by geodesic segments of $\mathbb{M}$. 

Moreover each face of the complex $X$ can be naturally labelled by the corresponding prototile of the tiling.


\section{Dynamics on space of tilings}\label{sec:dynamics}

\subsection{Space of tilings}\label{spacetiling}
As before, we fix a subgroup $\Gamma$ of $\isom(M)$,
and we consider a finite set $\mathcal{P}$ of prototiles.
The {\bf space of tilings on} $\mathcal{P}$,
denoted by $\Sigma_{\mathcal{P}}$,
is the (possibly empty) set of all the tilings of $M$ modelled on $\mathcal{P}$.

The group $\Gamma$ naturally acts on $\Sigma_{\mathcal{P}}$ in the
following way.
Let $x=(M, \Gamma, \mathcal{P}, \textsf{T})$ be an element of 
$\Sigma_{\mathcal{P}}$, and let $g$ be an element of $\Gamma$.
We define the tiling 
$gx=(M, \Gamma, \mathcal{P}, g\textsf{T})\in \Sigma_{\mathcal{P}}$
where
$g\textsf{T}=\{gT, T\in \textsf{T}\}.$

The set $\Sigma_{\mathcal{P}}$ can be equipped with a metric topology.
The idea is that two tilings $x,y$ in $\Sigma_{\mathcal{P}}$ are close
to each other if, up to moving $y$ by an element $g\in\Gamma$ which
``does not move a lot'', $x$ and $y$ agree on a large ball of $\mathbb{M}$
centered at the origin (see for instance \cite{Bel.Ben.Gam.06,Robin.04,Sad.08,Pet.06}. 

For $r\in\R$, we denote by $B_r$ the closed ball in $(M,d)$
of radius $r$, centered at the origin $O$ of $M$
(where we use the convention $B_r=\emptyset$ if $r<0$).
For $r\geq 0$ and $g\in\isom(M)$, we define:
$$||g||_{r}=\sup_{Q\in B_r}d(Q,gQ).$$

Let $x\in \Sigma_{\mathcal{P}}$ be a tiling and 
$K\subset M$ be compact subset. 
We denote by $K[x]$ the set of all the patches of $x$ 
for which the supports contain $K$. 
In particular, $B_{1/r}[x]$ is the set of subpatches
of $x$ for which the supports contain the ball $B_{1/r}$
centered at the origin $O$.
For any $x,y\in \Sigma_{\mathcal{P}}$, we define:
$$A(x,y)=\{r>0\;| \;\exists\; p\in B_{1/r}[x], \;\exists\; q\in B_{1/r}[y],
\;\exists\; g\in \Gamma: \;gp=q, \; ||g||_{1/r}\leq r\},$$
$$d(x,y)=\inf (\{1\}\cup A(x,y)).$$
This is a well known fact that the map $d$ is a distance on $\Sigma_{\mathcal{P}}$. 
However, for completeness, we give a detailed proof of this fact in our context in the Appendix.

\subsection{Minimality}\label{subsec:min}

We note that the group $\Gamma$ acts on 
$\Sigma_\mathcal{P}$ by homeomorphisms:
$(\Sigma_{\mathcal{P}},\Gamma)$ is a dynamical system.

Let  $x=(M,\Gamma, \mathcal{P},\textsf{T}) \in \Sigma_\mathcal{P}$ be a tiling. 
The {\bf hull} of $x$, denoted by $\Omega(x)$, is the closure
in $\Sigma_\mathcal{P}$ of the orbit of $x$ under the action of $\Gamma$:
$$\Omega(x)=\overline{\Gamma x}.$$

We recall that if $M$ is a topological space, and $\Gamma$ 
a group of homeomorphisms of $M$, the dynamical system $(M,\Gamma)$ 
is {\bf minimal} if every orbit is dense in $M$: 
$$\forall x\in M, \;\overline{\Gamma x}=M.$$
The minimality of $(\Omega(x),\Gamma)$ is related to a more 
combinatorial property of the tiling $x$. 
A tiling $x$ is {\bf repetitive} if
for any protopatch $\hat{P}$ of $\mathcal{L}(x)$ there exists $r>0$ such that for 
any $g\in \Gamma$, $\hat{P}$ occurs in the ball $gB$, where $B$ is the ball of radius $r$ centered at $O$.

\begin{prop}[Gottschalk's Theorem]\label{prop:repet}
Let $x$ be a tiling, and $\Omega(x)$ be its hull.

\begin{enumerate}[(i)]
\item Let $y$ be a tiling. Then  $y\in \Omega(x)$ 
$\Longleftrightarrow$ $\mathcal{L}(y)\subseteq\mathcal{L}(x)$.
\item If the tiling $x$ is repetitive and if $y\in \Omega(x)$, then 
$\mathcal{L}(y)=\mathcal{L}(x)$ and $y$ is repetitive.
\item  If the tiling $x$ is repetitive, then the dynamical system $(\Omega(x),\Gamma)$ is minimal.
\item If the dynamical system $(\Omega(x),\Gamma)$ is minimal, and $\Omega(x)$ is a compact set, then the tiling $x$ is repetitive.
\end{enumerate}
\end{prop}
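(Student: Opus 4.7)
The four parts should be proved in order, each building on its predecessors. The common thread is the translation between the metric on $\Sigma_{\mathcal{P}}$ and the combinatorial information in the languages $\mathcal{L}(\cdot)$: the distance $d(y,z)$ is small exactly when $y$ and $z$ share a large patch near the origin up to a small $\Gamma$-isometry.

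For part (i), the forward direction supposes $g_n x \to y$ and $\hat{P}\in \mathcal{L}(y)$ is represented by a patch supported inside some $B_R$. For $n$ large the definition of $d$ provides patches of $y$ and $g_n x$ that contain $B_R$ and are related by a near-identity element of $\Gamma$; the copy in $g_n x$ of the representative of $\hat{P}$ then yields $\hat{P}\in \mathcal{L}(g_n x)=\mathcal{L}(x)$. Conversely, given $\mathcal{L}(y)\subseteq \mathcal{L}(x)$, for each integer $n$ pick a patch $P_n$ of $y$ with support containing $B_n$ together with $g_n\in \Gamma$ with $g_n P_n\subset x$; the tiling $g_n^{-1}x$ then contains the patch $P_n$ itself, and plugging $g=\mathrm{Id}$ into the set $A(g_n^{-1}x,y)$ gives $d(g_n^{-1}x,y)\to 0$.

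Part (ii) follows quickly: (i) gives $\mathcal{L}(y)\subseteq \mathcal{L}(x)$, and the repetitivity of $x$ combined with the fact that $y$ is approximated by translates of $x$ on arbitrarily large balls yields the reverse inclusion and the repetitivity of $y$. Part (iii) is then immediate: for $y,z\in \Omega(x)$ and $\varepsilon>0$, pick a patch of $z$ whose support contains $B_{1/\varepsilon}$; by (ii) its protopatch lies in $\mathcal{L}(y)$, so some $\Gamma$-translate $gy$ contains exactly that patch, forcing $d(gy,z)\leq \varepsilon$.

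Part (iv) is the main step and the only place compactness of $\Omega(x)$ enters. Fix $\hat{P}\in \mathcal{L}(x)$ represented by a patch $P$ with support in $B_{r_0}$. For any $y\in \Omega(x)$, minimality gives $x\in \overline{\Gamma y}$ and part (i) then gives $\mathcal{L}(x)\subseteq \mathcal{L}(y)$, so $y$ contains a $\Gamma$-translate $g_y P$ whose support lies in some $B_{R_y}$. A direct metric continuity argument produces an open neighborhood $V_y$ of $y$ in $\Omega(x)$ on which this patch persists with support inside $B_{R_y+1}$. Compactness of $\Omega(x)$ extracts a finite subcover $V_{y_1},\ldots,V_{y_N}$, so $R=\max_i(R_{y_i}+1)$ is uniform: every $z\in \Omega(x)$ contains a $\Gamma$-equivalent copy of $P$ with support in $B_R$. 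Applying this to $z=g^{-1}x\in \Omega(x)$ for arbitrary $g\in \Gamma$ and translating back shows that $x$ contains a copy of $P$ inside the ball $gB_R$, which is exactly repetitivity. The main technical obstacle lies in the continuity step --- verifying that the condition ``contains a copy of $P$ in $B_R$'' defines an open subset of $\Omega(x)$ --- which requires tuning the metric parameters so that near-identity elements of $\Gamma$ do not push the fixed patch out of its designated ball.
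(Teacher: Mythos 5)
Your parts (i)--(iii) follow the paper's own argument essentially step for step: the forward direction of (i) by pushing protopatches of $y$ back into $x$ through the approximating sequence, the converse by choosing patches $P_n$ of $y$ covering $B_n$ and observing $d(g_n^{-1}x,y)\to 0$ with $g=\mathrm{Id}$ as witness, then (ii) from repetitivity plus (i), and (iii) from $\mathcal{L}(y)=\mathcal{L}(x)$ via (i) applied with the roles of the two tilings exchanged. Part (iv) is where you take a genuinely different route. The paper argues by contraposition: if $x$ is not repetitive, there are a protopatch $\hat{P}\in\mathcal{L}(x)$ and elements $g_n\in\Gamma$ such that $\hat{P}$ does not occur in the patch of $x$ covering $g_nB_n$; sequential compactness of $\Omega(x)$ extracts a limit $y$ of $g_n^{-1}x$ with $\hat{P}\notin\mathcal{L}(y)$, whence $\Omega(y)\subsetneq\Omega(x)$ and minimality fails. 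You instead argue directly: minimality and (i) give $\mathcal{L}(x)\subseteq\mathcal{L}(y)$ for every $y\in\Omega(x)$, and an open-cover compactness argument makes the radius of occurrence uniform over the hull, after which specializing to $z=g^{-1}x$ yields repetitivity. Both are correct, and both rest on the same semicontinuity of patch-occurrence under the tiling metric: in your version it is the openness of the set of tilings containing a copy of $P$ with support in $B_R$ (the continuity step you flag, which does go through --- a tile of $z$ contained in $B_{1/r}$ is a tile of any patch of $z$ whose support contains $B_{1/r}$, so the copy of $P$ is carried into $z'$ by the near-identity element $g$, landing in $B_{R+1}$); in the paper's version it is the dual fact that a patch absent from larger and larger central balls of $g_n^{-1}x$ is absent from the limit. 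The paper's contrapositive is slightly leaner, since there the needed semicontinuity is exactly the definition of convergence; your direct version gives the marginally stronger byproduct that the repetitivity radius for $\hat{P}$ can be chosen uniformly over all tilings in the hull.
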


For completeness we include a proof of Proposition~\ref{prop:repet} in Appendix.

\section{Topological pre-substitutions}\label{sec:top}
In this section, we introduce the main object of this paper: 
topological pre-substitutions (Definition \ref{pre-subst-def}). 
Concretely, a topological pre-substitution can be seen as a replacement rule 
which explains how to replace a tile by a patch of tiles. 
But there is an issue to deal with: we want to be able to iterate a pre-substitution. 
For that, a notion of compatibility naturally arises: 
when two tiles are adjacent along an edge in a patch image of a given tile, 
this implies constraints on the images of the two tiles 
(if we want to be able to iterate), and so on. 
The elaboration of this leads us to recursively define the iterates  
of a topological pre-substitution and check the compatibility of 
these iterates in Section \ref{subsec-compat}.
In fact, we note in Section \ref{subsect-heredity} that the compatibility of a 
topological pre-substitution can be encoded in a finite graph.

Two examples of pre-substitutions are treated in Section~\ref{sec:subst-pav} and can be used as illustrations of the notions introduced below.

\subsection{Definition}\label{pre-subst-def}

A {\bf topological $k$-gon} ($k\geq 3)$ is a $2$-CW-complex made of one face, 
$k$ edges and $k$ vertices, which is homeomorphic to a closed disc $\mathbb{D}^2$, 
and such that the $1$-skeleton is the boundary $\mathbb{S}^1$ of the closed disc. 
A {\bf topological polygon} is a topological $k$-gon for some $k\geq 3$.

We consider a finite set $\T=\{T_1,\dots,T_d\}$ of topological polygons.
The elements of $\T$ are called {\bf tiles}, and $\T$ is called the {\bf set of tiles}.
If $T_i$ is a $n_i$-gon,
we denote by $E_i=\{e_{1,i},\dots,e_{n_i,i}\}$ the set of edges of $T_i$.

A {\bf patch} $P$ {\bf modelled} on $\T$ is a $2$-CW-complex homeomorphic to the closed disc 
$\mathbb{D}^2$ such that 
for each closed face $f$ of $P$, there exists a tile $T_i\in\T$ and a homeomorphism 
$\tau_{f}:f\rightarrow T_i$ which respects the cellular structure. 
Then $T_i=\tau_f(f)$ is called the {\bf type} of the face $f$, and denoted by $\type(f)$.

An edge $e$ of $P$ is called a {\bf boundary edge} if it is contained
in the boundary $\mathbb{S}^1$ of the disc $\mathbb{D}^2\cong P$. Such a boundary
edge is contained in exactly one closed face of $P$.
An edge $e$ of $P$ which is not a boundary edge
is called an {\bf interior edge}.
An interior edge is contained in exactly
two closed faces of $P$.

In the following definition, and for the rest of this article, 
the symbol $\sqcup$ stands for the disjoint union.

\begin{defn}
A {\bf topological pre-substitution} is a triplet 
$(\T,\sigma(\T),\sigma)$ where:
\begin{enumerate}[(i)]
	\item $\T=\{T_1,\dots,T_d\}$ is a set of tiles, 
	\item $\sigma(\T)=\{\sigma(T_1),\dots,\sigma(T_d)\}$ 
	is a set of patches modelled on $\T$,
	\item 
	$\displaystyle \sigma: \bigsqcup_{i\in\{1,\dots,d\}} T_i 
	\rightarrow \bigsqcup_{i\in\{1,\dots,d\}} \sigma(T_i)$ 
	is a homeomorphism,
	which restricts to homeomorphisms 
	$\sigma_i:T_i\rightarrow\sigma(T_i)$,
	such that the image of a vertex of $T_i$ is a vertex of 
        the boundary of $\sigma(T_i)$.
\end{enumerate}
\end{defn}

Since $\sigma$ is a homeomorphism, 
it maps the boundary of $T_i$ homeomorphically
onto the boundary of $\sigma(T_i)$.
We note that if $e$ is an edge of $T_i$ which joins the vertex
$v$ to the vertex $v'$, then $\sigma(e)$ is an edge path in the boundary 
of $\sigma(T_i)$ which joins the vertex $\sigma(v)$ to the vertex $\sigma(v')$.

For simplicity, the topological pre-substitution 
$(\T,\sigma(\T),\sigma)$ will be often denoted by $\sigma$.

\subsection{Iterating a topological pre-substitution}

Let $\mathcal{T}=\{T_1,\dots, T_d\}$ be the set
of tiles of $\sigma$, and let 
$E_i=\{e_{1,i},\dots,e_{n_i,i}\}$ be the
the set of edges of $T_i$ ($i\in\{1,\dots,d\}$).
We denote by $E$ the set of all edges of all
the tiles of $\mathcal{T}$: 
$\displaystyle{E=\cup_i E_i}$.

\subsubsection{Balanced edges}

A pair $(e,e')\in E\times E$ is {\bf balanced} if
$\sigma(e)$ and $\sigma(e')$ have the same length
(= the number of edges in the edge path).

The {\bf flip} is the involution of $E\times E$
defined by $(e,e')\mapsto (e',e)$.
The quotient of $E\times E$ obtained by identifying 
a pair and its image by the flip
is denoted by $E_2$.
We denoted by $[e,e']$ the image of a pair
$(e,e')\in E\times E$ in $E_2$.
Since the flip preserves balanced pairs,
the notion of ``being balanced" is well defined
for elements of $E_2$.
The subset of $E_2$ which consists of balanced
elements is called the {\bf set of balanced pairs}, and
denoted by $\mathcal{B}$.

Let $[e,e']\in \mathcal{B}$ a balanced pair.
In other words, $\sigma(e)$ and $\sigma(e')$
are paths of edges which have same length say $p\geq 1$:
$\sigma(e)=e_1\dots e_p$, $\sigma(e')=e'_1\dots e'_p$
(with $e_i,e'_i\in E$ for $i\in\{1,\dots,p\}$). 
Then the $[e_i,e'_i]\in E_2$
($i\in\{1,\dots,p\}$) are called the {\bf descendants}
of $[e,e']$.

Now, we consider a patch $P$ modelled on $\mathcal{T}$. 
An interior edge $e$ of $P$ defines an 
element $[\varepsilon,\varepsilon']$ of $E_2$. 
Indeed, let $f$ and $f'$ be the two faces adjacent to $e$ in $P$.
We denote by $\varepsilon=\tau_f(e)$ the edge of $\type(f)$
corresponding to $e$, and by $\varepsilon'=\tau_{f'}(e)$
the edge of $\type(f')$ corresponding to $e$.
The edge $e$ is said to be {\bf balanced} 
if $[\varepsilon,\varepsilon']$ is balanced.

\subsubsection{Compatible pre-substitution}\label{subsec-compat}

We define, by induction on $p\in\N$, 
the notion of a $p$-compatible topological pre-substitution $\sigma$.
To any $p$-compatible topological pre-substitution $\sigma$
we associate a new pre-substitution which will be denoted by
$\sigma^p$.

\begin{defn}\label{defpcomp}
\begin{enumerate}[(a)]
\item Any pre-substitution $(\T,\sigma(\T),\sigma)$ 
is {\bf $1$-compatible}. 
\item A pre-substitution $(\T,\sigma(\T),\sigma)$ is said 
to be {\bf $p$-compatible} ($p\geq 2$) if:
\begin{enumerate}[(i)]
\item $(\T,\sigma(\T),\sigma)$ is $(p-1)$-compatible 
\item for all $i\in\{1,\dots,d\}$, 
every interior edge $e$ of $\sigma^{p-1}(T_i)$
is balanced.
\end{enumerate}
\end{enumerate}
\end{defn}

We suppose now that $(\T,\sigma(\T),\sigma)$ is a 
$p$-compatible pre-substitution. 
Then we define $\sigma^p(T_i)$ ($i\in\{1,\dots,d\}$) as
the patch obtained in the following way: 

We consider the collection of patches $\sigma(\type(f))$ for each
face $f$ of $\sigma^{p-1}(T_i)$. 
Then, if $f$ and $f'$ are two faces of $\sigma^{p-1}(T_i)$ adjacent 
along some edge $e$, we glue, edge to edge, 
$\sigma(\type(f))$ and $\sigma(\type(f'))$
along $\sigma(\tau_{f}(e))$ and $\sigma(\tau_{f'}(e))$. 
This is possible since the $p$-compatibility of $\sigma$
ensures that the edge $e$ is balanced.
The resulting patch $\sigma^p(T_i)$ is defined by:

$$\sigma^p(T_i)=\left.\left(\bigsqcup_{f \text{ face of } \sigma^{p-1}(T_i)}
\sigma(\type(f)) \right) \right/ \sim$$
where $\sim$ denotes the gluing.

We define $\sigma^p(\T)$ to be the set 
$\{\sigma^p(T_1),\dots,\sigma^p(T_d)\}$.

The map $\sigma$ induces a natural map on the faces of each $\sigma^{p-1}(T_i)$ which factorizes to a map
$\sigma_{i,p}:\sigma^{p-1}(T_i) \rightarrow \sigma^{p}(T_i)$
thanks to the $p$-compatibility hypothesis:

\begin{equation*}
\begin{CD}
  \displaystyle\bigsqcup_{f \text{ face of } \sigma^{p-1}(T_i)}
\type(f)  @> \sigma >> \displaystyle\bigsqcup_{f \text{ face of } \sigma^{p-1}(T_i)}
\sigma(\type(f)) \\
@V{\sim}VV        @VV{\sim}V\\
 \sigma^{p-1}(T_i) @>>\sigma_{i,p}> \sigma^{p}(T_i)
\end{CD}
\end{equation*}

We note that $\sigma_{i,p}$ is a homeomorphism which
sends vertices to vertices.
Then we define the map
$$\sigma^{p}_i: T_i \rightarrow \sigma^{p}(T_i)$$
as the composition:
$\sigma^{p}_i= \sigma_{i,p}\circ\sigma_i^{p-1}$.
This is an homemorphism which
sends vertices to vertices.
Then $\sigma^p$ is naturally defined
such that the restriction of $\sigma^p$
on $T_i$ is $\sigma_i^p$.
We remark that $\sigma^1=\sigma$.

We have thus obtained a topological pre-substitution 
$(\T,\sigma^p(\T),{\sigma}^p)$.

\begin{rem}
We can define $\sigma^0$ to be the identity to
$\bigsqcup_i T_i$.
Thus, setting that $(\T,\T,\sigma^0)$ is $0$-compatible, 
we consistently extend Definition \ref{defpcomp} 
to all non-negative integers $p$.

\end{rem}

\begin{example}
We give an example of a pre-substitution $\sigma$ which is $1$-compatible 
but not $2$-compatible. The set of tiles consists of only one tile $T$, which is a triangle.
The vertices of $T$ are denoted $t_1,t_2,t_3$ 
(or simply $1,2,3$ on Figure \ref{fig:triangle})
and the edge between 
$t_i$ and $t_{i+1}$ is denoted $t_{i,i+1}$, (for $i\in\{1,2,3\}$ modulo 3). 
The patch $\sigma(T)$ is obtained by gluing two copies of $T$ along 
the edges $t_{12}$ for one and $t_{13}$ for the other one. 
The image $\sigma(t_i)$ of the vertex $t_i$ is indicated on the figure.
 This pre-susbtitution is not $2$-compatible since 
$|\sigma(t_{13})|=2, |\sigma(t_{12})|=1$, see Figure \ref{fig:triangle}.
\end{example}

 \begin{figure}[!ht] 
 \begin{center} 
 \begin{tikzpicture}[scale=.6]
\draw (0,0)--(4,0)--(2,3.236)--cycle;
\draw (0.7,.4) node{{\tiny1}};
\draw (3.3,.4) node{{\tiny 2}};
\draw (2,2.6) node{{\tiny 3}};
\draw (2,1) node{T};

\draw (0,0) node{$\bullet$};
\draw (4,0) node{$\bullet$};
\draw (2,3.236) node{$\bullet$};

\draw (4,1.5)--(5.5,1.5);
\draw (5.3,1.6)--(5.5,1.5);
\draw (5.3,1.4)--(5.5,1.5);

\draw (6,0)--(10,0)--(8,3.236)--cycle;
\draw (10,0)--(8,3.236)--(12,3.236)--cycle;
\draw (6.7,.4) node{{\tiny 3}};
\draw (9.3,.4) node{{\tiny 1}};
\draw (8,2.6) node{{\tiny 2}};
\draw (11.3,2.8) node{{\tiny 2}};
\draw (10,.6) node{{\tiny 1}};
\draw (8.7,2.8) node{{\tiny 3}};
\draw (8,1) node{T};
\draw (10,2) node{T}; 
\draw (6.1,-.7) node{$\sigma(1)$};
\draw (9.7,-.7) node{$\sigma(2)$};
\draw (12.3,3.7) node{$\sigma(3)$};

\draw (6,0) node{$\bullet$};
\draw (10,0) node{$\bullet$}; 
\draw (12,3.236) node{$\bullet$};
 
 \draw (12,1.5)--(13.5,1.5);
\draw (13.3,1.6)--(13.5,1.5);
\draw (13.3,1.4)--(13.5,1.5);
 
\draw (16,0)--(14,3.236)--(18,3.236)--cycle;
\draw (16,0)--(20,0)--(18,3.236)--cycle;
\draw (20.3,0)--(18.3,3.236)--(22.3,3.236)--cycle;  
\draw (22.3,3.236)--(19.3,1.618);  
  
\draw (18,2.7) node{{\tiny 1}};
\draw (16.7, .4) node{{\tiny 2}};
\draw (16,.6) node{{\tiny 3}};
\draw (17.3,2.8) node{{\tiny 1}};
\draw (19.2,.4) node{{\tiny 3}};
\draw (20.3,.6) node{{\tiny 3}};  
\draw (21.5,2.4) node{{\tiny 1}};
\draw (19.8,1.4) node{{\tiny 2}};  
\draw (19.4,2.1) node{{\tiny 3}};
\draw (20.9,2.85) node{{\tiny 1}};
\draw (18.9,2.8) node{{\tiny 2}};
 \draw (14.7,2.8) node{{\tiny 1}};
 \end{tikzpicture}

 \end{center}
 \caption{A topological pre-substitution which is not {\tiny 2}-compatible}
 \label{fig:triangle}
 \end{figure}
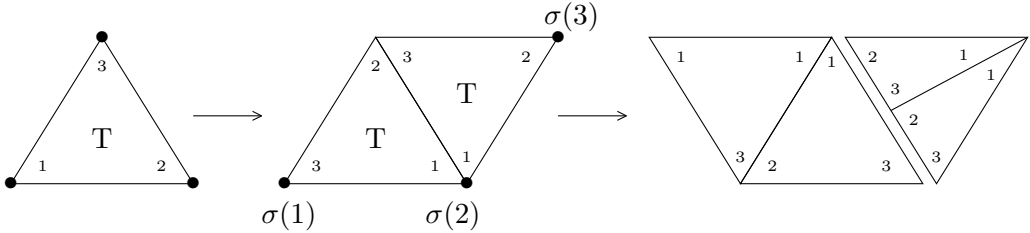

\begin{defn}
A pre-substitution $\sigma=(\T,\sigma(\T),\sigma)$ is 
{\bf compatible} if it is $p$-compatible for every 
$p\in\N$.
\end{defn}

\begin{rem} 
If the map $e\mapsto |\sigma(e)|$ is constant  on the set $E$, then $\sigma$ is compatible. Let  $\sigma=(\T,\sigma(\T),\sigma)$ be a compatible pre-substitution and let $T\in\mathcal{T}$. 
For all $p,q\in\mathbb{N}$, we will denote by $\sigma^{q}$ the natural map defined by
$\sigma^q:\sigma^p(T)\rightarrow \sigma^{p+q}(T)$.
\end{rem}

\subsubsection{Heredity graph of edges $\mathcal{E}(\sigma)$}\label{subsect-heredity}

In this subsection we give an algorithm which decides
whether a pre-substitution is compatible. Examples of heredity graphs are computed in Section~\ref{sec:subst-pav}.

Suppose that $\sigma$ is $p$-compatible.
We define $W_p$ as the set of elements 
$[\varepsilon,\varepsilon']\in E_2$ such that there
exists $i\in\{1,\dots, d\}$, as well as two 
faces $f$ and $f'$ in $\sigma^p(T_i)$ glued along 
an edge $e$, such that
$\tau_f(e)=\varepsilon$ and $\tau_{f'}(e)=\varepsilon'$. 
The topological pre-substitution $\sigma$ is 
$(p+1)$-compatible if and only if  $W_p$ is contained in the 
set of balanced pairs $\mathcal{B}$:
$W_p\subseteq \mathcal{B}$. Then:

\begin{itemize}
\item either $W_p\nsubseteq \mathcal{B}$: the
algorithm stops, telling us that the substitution is
not compatible,
\item or $W_p\subseteq \mathcal{B}$: then
we define $V_p=V_{p-1}\cup W_p$.
\end{itemize}
By convention we settle $V_0=\emptyset$.

Suppose that $\sigma$ is compatible.
The sequence $(V_p)_{p\in\N}$ is an increasing
sequence (for the inclusion) of subsets of
the finite set $E_2$. Hence
there exists some $p_0\in\N$ such that
$V_{p_0+1}=V_{p_0}$ (and thus 
$V_{p}=V_{p_0}$ for all $p\geq p_0$).
The algorithm stops at step $p_0+1$
(where $p_0$ is the smallest integer
such that $V_{p_0+1}=V_{p_0}$),
telling that $\sigma$ is compatible.

The {\bf heredity graph of edges} of $\sigma$,
denoted $\mathcal{E}(\sigma)$, is defined in the following 
way. The set of vertices of $\mathcal{E}(\sigma)$
is $V_{p_0}$. There is an oriented edge from
vertex $[e,e']$ to vertex $[\epsilon,\epsilon']$
if $[\epsilon,\epsilon']$ is a descendant of $[e,e']$.


\section{Topological substitutions and substitutive tilings}\label{sec:top-subst}
We introduce in this section the notion of ``core property" which is natural to obtain an expanding dynamical system.
\subsection{The core property}
Let $P$ be a patch modelled on $\mathcal{T}=\{T_1,\dots,T_d\}$.
The {\bf thick boundary} $\B(P)$ of $P$ 
is the closed sub-complex of $P$ consisting of 
the closed faces which contain at least one vertex 
of the boundary $\partial P$ of $P$.
The {\bf core} $\core(P)$ of $P$ is 
the closure in $P$ of the complement
of $\B(P)$: in particular, $\core(P)$ is a closed subcomplex of $P$
-- see Figure~\ref{fig:thick-core}.

\begin{figure}[!ht] 
 \begin{center} 
 \includegraphics[width=5cm]{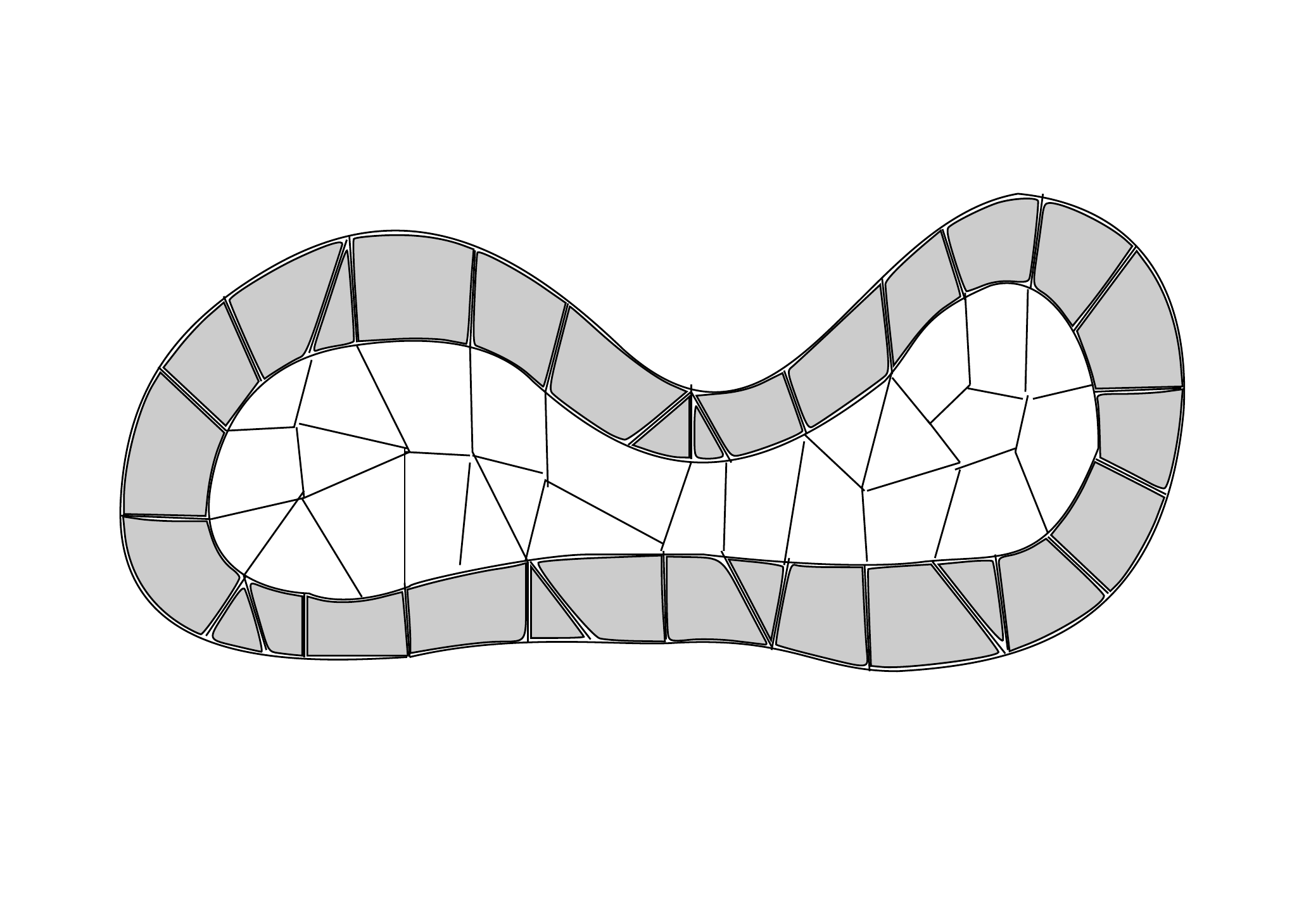}
 \caption{The thick boundary is the grey subcomplex and the core is the white subcomplex}
 \label{fig:thick-core}
 \end{center}
 \end{figure}
 
\begin{rem}\label{rem:core}
Let $(\T,\sigma(\T),\sigma)$ be a pre-substitution. 
It follows directly from the definition of $\sigma^p$
that
for any $T\in\T$, and any $p\in\N$, one has:
$$\sigma(\core(\sigma^p(T)))\subseteq \core(\sigma^{p+1}(T)),$$
$$\B(\sigma^{p+1}(T))\subseteq\sigma(\B(\sigma^p(T))).$$
\end{rem}

\begin{defn}
A pre-substitution $(\T,\sigma(\T),\sigma)$ has the {\bf core property} 
if there exist $i\in\{1\dots d\}$ and $k\in\mathbb{N}$ such that the core 
of $\sigma^k(T_i)$ 
is non-empty.
\end{defn}
For simplicity, in the following we will assume that $i=1$.

\begin{example}
The substitution defined by Figure \ref{carre} does not have the core property.
Indeed, the thick boundary of $\sigma^n(T)$ is equal to $\sigma^n(T)$ for every integer $n$. 

 \begin{figure}[ht]
\begin{center} 
 \includegraphics[width= 9cm]{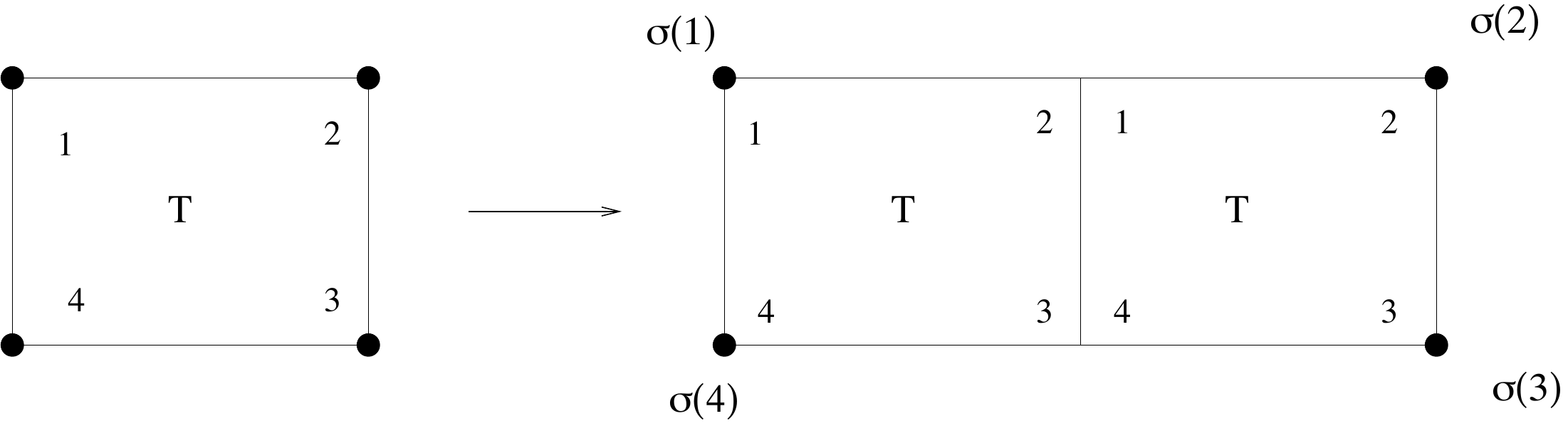}
 \end{center}

\caption{For every integer $k$, the $2$-complex $\sigma^k(T)$ has empty core}\label{carre}
 \end{figure}

\end{example}

\begin{defn}\label{def:top-sub}
A {\bf topological substitution} is a pre-substitution 
which is compatible and has the core property. 
\end{defn}

\subsection{Inflation}\label{sec:inflation}

Consider a tile $T\in\mathcal{T}$ such that the core of 
$\sigma(T)$ contains a face of type $T$.
Then, we can identify the tile  $T$ with a subcomplex of 
the core of $\sigma(T)$. 
By induction, $\sigma^k(T)$ is thus identified with a subcomplex 
of $\sigma^{k+1}(T)$ ($k\in\mathbb{N}$). 
We define $\sigma^\infty(T)$ as the increasing union:

$$\sigma^\infty(T)=\bigcup_{k=0}^\infty\sigma^k(T).$$
By construction, the complex $\sigma^\infty(T)$ 
is homeomorphic to $\mathbb{R}^2$.
We say that such a complex is obtained {\bf by inflation from $\sigma$}. 
Moreover this complex can be labelled by the types of the topological polygons.

\begin{defn}
A tiling of the plane $\mathbb{M}$
is {\bf substitutive} if the labelled complex associated to it (see Section \ref{section:complex})
can be obtained by inflation from a topological substitution. 
In this case the geometric realization of a tile $T$ is denoted $\lfloor T\rfloor$.
\end{defn}

We remark that in this case, the substitution can be realized in a map $\lfloor \sigma \rfloor$.

\begin{example}
The regular tiling of $\mathbb{E}^2$ by equilateral triangles is substitutive. Indeed the associated complex can be obtained by inflation from the topological substitution $\sigma$ defined in Figure \ref{triang-equ}. We  notice that the core of $\sigma(T)$ is empty, but the core of $\sigma^2(T)$ is nonempty.
\end{example}
 \begin{figure}[ht]
\begin{center} 
\begin{tikzpicture}[scale=.4]
\draw (0,0)--(4,0)--(2,3.236)--cycle;
\draw (0.7,.4) node{{\tiny 1}};
\draw (3.3,.4) node{{\tiny 2}};
\draw (2,2.6) node{{\tiny 3}};
\draw (2,1) node{{\tiny T}};

\draw (0,0) node{$\bullet$};
\draw (4,0) node{$\bullet$};
\draw (2,3.236) node{$\bullet$};

\draw (6,1.5)--(7.5,1.5);
\draw (7.3,1.6)--(7.5,1.5);
\draw (7.3,1.4)--(7.5,1.5);

\draw (8,0)--(16,0)--(12,6.472)--cycle;
\draw (12,0)--(14,3.236)--(10,3.236)--cycle;
\draw (8.7,.4) node{{\tiny 1}};
\draw (11.3,.4) node{{\tiny 2}};
\draw (10,2.6) node{{\tiny 3}};
\draw (10,1) node{{\tiny T}};

\draw (12.7,.4) node{{\tiny 1}};
\draw (15.3,.4) node{{\tiny 2}};
\draw (14,2.6) node{{\tiny 3}};
\draw (14,1) node{{\tiny T}};

\draw (10.7,3.6) node{{\tiny 1}};
\draw (13.3,3.6) node{{\tiny 2}};
\draw (12,5.7) node{{\tiny 3}};
\draw (12,4) node{{\tiny T}};

\draw (11.9,.7) node{{\tiny 1}};
\draw (10.8,2.6) node{{\tiny 3}};
\draw (13,2.6) node{{\tiny 2}};
\draw (12,2) node{{\tiny T}};

\draw (8,0) node{$\bullet$};
\draw (16,0) node{$\bullet$};
\draw (12,6.472) node{$\bullet$};

\draw (8,-1) node{$\sigma(1)$};
\draw (16,-1) node{$\sigma(2)$};
\draw (12,7.5)  node{$\sigma(3)$};

\draw (18,1.5)--(19.5,1.5);
\draw (19.3,1.6)--(19.5,1.5);
\draw (19.3,1.4)--(19.5,1.5);

\draw (20,0)--(36,0)--(28,12.948)--cycle;
\draw (28,0)--(32,6.472)--(24,6.472)--cycle;
\draw (24,0)--(26,3.236)--(22,3.236)--cycle;
\draw (32,0)--(34,3.236)--(30,3.236)--cycle;
\draw (28,6.472)--(26,3.236)--(30,3.236)--cycle;
\draw (28,6.472)--(26,9.708)--(30,9.708)--cycle;
\draw (20,0) node{$\bullet$};
\draw (36,0) node{$\bullet$};
\draw (28,12.948) node{$\bullet$};
\draw (20.7,.4) node{{\tiny 1}};
\draw (23.3,.4) node{{\tiny 2}};
\draw (22,2.6) node{{\tiny 3}};
\draw (24.7,.4) node{{\tiny 1}};
\draw (27.3,.4) node{{\tiny 2}};
\draw (26,2.6) node{{\tiny 3}};

\draw (28.7,.4) node{{\tiny 1}};
\draw (31.3,.4) node{{\tiny 2}};
\draw (30,2.6) node{{\tiny 3}};

\draw (32.7,.4) node{{\tiny 1}};
\draw (35.3,.4) node{{\tiny 2}};
\draw (34,2.6) node{{\tiny 3}};

\draw (22.7,3.6) node{{\tiny 1}};
\draw (25.3,3.6) node{{\tiny 2}};
\draw (24,5.7) node{{\tiny 3}};

\draw (26.7,3.6) node{{\tiny 3}};
\draw (29.3,3.6) node{{\tiny 1}};
\draw (28,5.7) node{{\tiny 2}};

\draw (30.7,3.6) node{{\tiny 1}};
\draw (33.3,3.6) node{{\tiny 2}};
\draw (32,5.7) node{{\tiny 3}};

\draw (24.7,6.8) node{{\tiny 1}};
\draw (27.3,6.8) node{{\tiny 2}};
\draw (26,8.9) node{{\tiny 3}};

\draw (28.7,6.8) node{{\tiny 1}};
\draw (31.3,6.8) node{{\tiny 2}};
\draw (30,8.9) node{{\tiny 3}};

\draw (26.7,10) node{{\tiny 1}};
\draw (29.3,10) node{{\tiny 2}};
\draw (28,12.1) node{{\tiny 3}};

\draw (24.8,2.6) node{{\tiny 2}};

\draw (27.9,.7) node{{\tiny 1}};
\draw (28.8,2.6) node{{\tiny 2}};
\draw (27,2.6) node{{\tiny 3}};

\draw (31.9,.7) node{{\tiny 1}};
\draw (32.8,2.6) node{{\tiny 2}};
\draw (31,2.6) node{{\tiny 3}};

\draw (23.9,.7) node{{\tiny 1}};

\draw (23,2.6) node{{\tiny 3}};

\draw (26,3.8) node{{\tiny 1}};
\draw (26.8,5.7) node{{\tiny 2}};
\draw (25,5.7) node{{\tiny 3}};

\draw (30,3.8) node{{\tiny 1}};
\draw (30.8,5.7) node{{\tiny 2}};
\draw (29,5.7) node{{\tiny 3}};

\draw (28,7) node{{\tiny 1}};
\draw (28.8,9) node{{\tiny 2}};
\draw (27,8.9) node{{\tiny 3}};
\end{tikzpicture}
 \end{center}
 \caption{A topological substitution which gives rise to the regular tiling 
 of $\mathbb{E}^2$ by equilateral triangles}
\label{triang-equ}
 \end{figure}
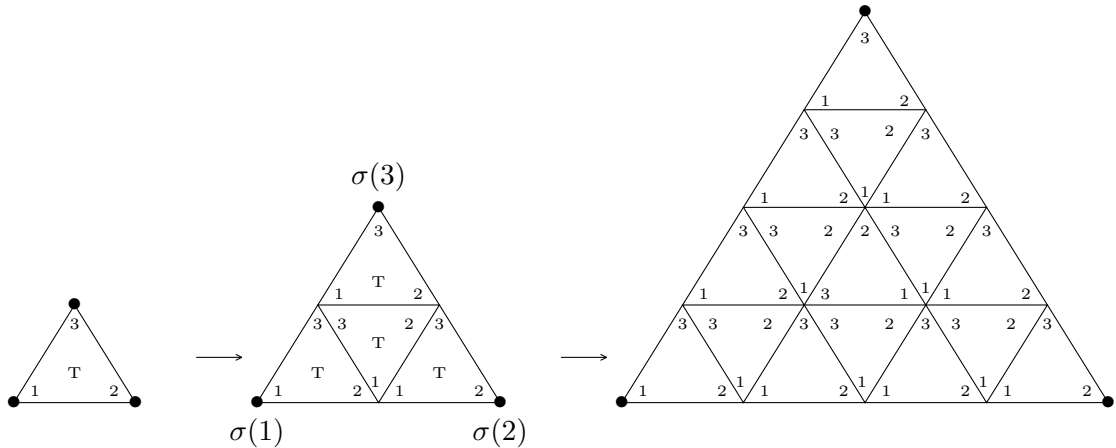

\begin{rem}\label{rem:path}
Consider a substitutive tiling 
$(\mathbb{M},\Gamma,\mathcal{P},\textsf{T})$ 
associated to the topological  substitution 
$(\mathcal{T},\sigma(\mathcal{T}),\sigma)$.
A path joining the geometric realization $\lfloor \partial\sigma^n(T) \rfloor$
of the boundary of $\sigma^n(T)$ to
the geometric realization $\lfloor \partial\sigma^{n+k}(T) \rfloor$
of the boundary of $\sigma^{n+k}(T)$ 
intersects at least $k$ tiles.  Indeed such a patch has to intersect the 
geometric realizations of the thick boundaries $B(\sigma^{k+i}(T))$ $(i=1\dots n)$, which are disjoint.
\end{rem}

\begin{rem}\label{rem:supertile}
Consider a substitutive tiling 
$(\mathbb{M},\Gamma,\mathcal{P},\textsf{T})$ 
associated to the topological  substitution 
$(\mathcal{T},\sigma(\mathcal{T}),\sigma)$. 
There exists a substitutive tiling
$(\mathbb{M},\Gamma,\mathcal{P}_k,\textsf{T}_k)$ 
associated to the topological  substitution 
$(\sigma^{k}(\mathcal{T}),\sigma(\sigma^{k}(\mathcal{T})),\sigma)$.  
Each element of $\mathcal{P}$ is the geometric realization of a 
$T_i$ $(i=1\dots d)$, and each element of $\mathcal{P}_k$ is the 
geometric realization of a patch $\sigma^{k}(T_i)$ $(i=1\dots d)$. 
An element of $\mathcal{P}_k$ is called a {\bf super-tile} of 
order $k$.
\end{rem}


\section{Primitive topological substitutions}\label{sec:prim}
\subsection{Primitive matrix}\label{subsec:pf}
An integer matrix $M\in \mathcal{M}_d(\mathbb{N})$ is {\bf primitive} 
if there exists some $k\in\mathbb{N}$ such that  all entries 
of $M^k$ are positive.
A primitive matrix $M$ satisfies the Perron-Frobenius Theorem 
(see for instance \cite[Chapter 1]{Sen.06}). 
We cite here only the part of it which is relevant in the context of the paper.   

\begin{thm}[Perron-Frobenius]\label{th:PF}
Let $M\in\mathcal{M}_d(\mathbb{N})$ be a primitive matrix.
There exists a positive real number $\lambda$, called the 
Perron Frobenius eigenvalue, such that $\lambda$ is an 
eigenvalue of $M$, and any other eigenvalue $r$ (possibly complex) 
has modulus strictly smaller than $\lambda$, {\it i.e} $|r|<\lambda$. 
Moreover there exists a left eigenvector $v$ of $M$ associated with 
$\lambda$, $v = (v_1,\dots,v_d)$, with strictly 
positive coordinates:
$$vM=\lambda v\;\;\; \text{ with }\; v_i>0 \; \text{ for all } \; 1\leq i\leq n .$$
If one asks besides that $v_1+\dots+v_d=1$, then the eigenvector
$v$ is unique.
\end{thm}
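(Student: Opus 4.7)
The plan is to apply Brouwer's fixed point theorem on the standard simplex $\Delta = \{x \in \R^d_{\geq 0} : x_1 + \cdots + x_d = 1\}$ to produce the left eigenvector, then use primitivity to deduce its strict positivity, and finally exploit positivity together with the case of equality in the triangle inequality to establish strict dominance of the Perron eigenvalue over all other spectral values.

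First I would consider the continuous self-map
$$ f : \Delta \to \Delta,\qquad f(x) = \frac{xM}{\|xM\|_1}, $$
where $x$ is viewed as a row vector. This is well-defined because primitivity forbids a zero row of $M$ (such a row would persist in every $M^k$), so $xM \neq 0$ for every $x \in \Delta$. Brouwer's theorem then furnishes a fixed point $v \in \Delta$, giving $vM = \lambda v$ with $\lambda = \|vM\|_1 > 0$. To upgrade $v$ to a strictly positive vector, I would choose $k$ such that all entries of $M^k$ are positive and use the identity $vM^k = \lambda^k v$: since $v \geq 0$ and $v \neq 0$, the left-hand side has all entries strictly positive, hence $v_i > 0$ for every $i$.

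Next I would show that any other eigenvalue $\mu \in \C$ satisfies $|\mu| < \lambda$. Let $w \in \C^d$ be a corresponding left eigenvector, and let $|w|$ denote the vector of coordinatewise moduli. The triangle inequality yields $|\mu| \, |w| \leq |w| M$ coordinatewise. Applying the same Brouwer construction to $M^T$ produces a strictly positive \emph{right} Perron vector $u$ with $Mu = \lambda u$. Pairing both sides of the iterated inequality $|\mu|^k |w| \leq |w| M^k$ with $u$ gives $|\mu|^k (|w| \cdot u) \leq \lambda^k (|w| \cdot u)$, and since $|w| \cdot u > 0$, this forces $|\mu| \leq \lambda$. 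For strict inequality, assume $|\mu| = \lambda$: then equality holds in the triangle inequality at every coordinate, and combining this with $M^k > 0$ forces the phases of all coordinates of $w$ to align, so $w$ is (up to a unit complex scalar) a nonnegative vector, reducing to the uniqueness of the Perron ray and yielding $\mu = \lambda$.

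For uniqueness of the normalized $v$, if $v'$ is another normalized positive left $\lambda$-eigenvector, I would take the smallest $t > 0$ such that $v - tv'$ has a vanishing coordinate; since $v - tv'$ is then a nonnegative left $\lambda$-eigenvector with a zero entry, the positivity argument above forces $v - tv' = 0$, whence $v = v'$. The main obstacle will be the strict inequality $|\mu| < \lambda$: the weak inequality is a routine consequence of the triangle inequality, but excluding equality requires carefully exploiting the positivity of $M^k$ to propagate phase alignment from a single coordinate to all the others.
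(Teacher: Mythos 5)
The paper does not prove this statement: it is quoted as a classical result with a pointer to \cite[Chapter 1]{Sen.06}, so there is no in-paper argument to compare yours against. Judged on its own, your proposal is a correct and standard self-contained proof (the Brouwer fixed-point route, as opposed to, say, the Collatz--Wielandt variational characterization or the Hilbert projective metric contraction). All the main steps are sound: the map $x\mapsto xM/\|xM\|_1$ is well defined on the simplex because primitivity excludes a zero row; positivity of $v$ follows from $vM^k=\lambda^k v$ with $M^k>0$; the weak bound $|\mu|\le\lambda$ follows from pairing $|\mu|\,|w|\le |w|M$ with a positive right eigenvector; the equality case propagates phase alignment through the strictly positive matrix $M^k$; and the minimal-$t$ argument gives uniqueness of the normalized eigenvector. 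Two small points deserve a line each in a written-up version: (1) applying Brouwer to $M^{T}$ a priori produces a positive right eigenvector $u$ for some eigenvalue $\lambda'$, and you should note that $vMu=\lambda(v\cdot u)=\lambda'(v\cdot u)$ with $v\cdot u>0$ forces $\lambda'=\lambda$ before you pair with $u$; (2) in the equality case $|\mu|=\lambda$ you should first observe that pairing with $u$ forces $|w|M=\lambda|w|$, hence $|w|>0$, before invoking the equality case of the triangle inequality coordinate by coordinate --- you need every $w_i\ne 0$ for the phase-alignment argument to conclude. Neither is a genuine gap.
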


Given a primitive matrix $M\in \mathcal{M}_d(\mathbb{N})$,
for $x=(x_1,\dots,x_d)\in\R^d$ we define: 
$$||x||_{PF}=\displaystyle\sum_{i=1}^dv_ix_i=v\;{}^t\!x$$
where $v$ is the unique eigenvector given by Theorem \ref{th:PF}.
The theorem allows us to compute the norm
of $M^nx$.
\begin{equation}\label{eq:M^nx}
||M^nx||_{PF}=vM^n\;{}^t\!x=\lambda^n(v\;{}^t\!x)=\lambda^n||x||_{PF}.
\end{equation}

\subsection{Primitive substitution}
The {\bf transition matrix} 
$M_\sigma\in\mathcal{M}_{d}(\mathbb{N})$ associated
to the topological substitution $\sigma$ is the matrix whose entry $m_{i,j}$ 
is the number of faces of type $T_i$ in the patch $\sigma(T_j)$.
We note that if $M_\sigma$ is primitive then there exists an integer $k$ such that for any tile 
$T\in\T$, 
the patch $\sigma^k(T)$ contains a face of each possible type in $\T$.

\begin{lem}\label{prim}
Let $(\T,\sigma(\T),\sigma)$ be a topological substitution with a primitive transition matrix. There exists an 
integer $k$ such that for all $T,T'\in\mathcal{T}$, the core of $\sigma^k(T)$ 
contains a face of type $T'$. 
\end{lem}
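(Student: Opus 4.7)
The plan is to combine the core property, which only gives information about the single tile $T_1$, with the primitivity of $M_\sigma$, so as to obtain a nonempty core starting from every tile, and then to populate these cores with faces of every type.

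First, by primitivity, fix $N\in\N$ such that every entry of $M_\sigma^N$ is positive: for every $T,T'\in\T$, the patch $\sigma^N(T)$ contains at least one face of type $T'$. By the core property, also fix $k_0$ such that $\core(\sigma^{k_0}(T_1))$ is nonempty, and let $T^*\in\T$ be the type of some face in this core.

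Now for an arbitrary $T\in\T$, the patch $\sigma^N(T)$ contains a face of type $T_1$, so $\sigma^{N+k_0}(T)$ contains $\sigma^{k_0}(T_1)$ as a subpatch. The key topological observation is that whenever a patch $P$ sits inside a patch $Q$ (both homeomorphic to a disc), one has $\core(P)\subseteq \core(Q)$: any vertex of a face of $\core(P)$ lies in the topological interior of $P$, hence in the topological interior of $Q$, hence not on $\partial Q$. Applied here, this yields $\core(\sigma^{k_0}(T_1))\subseteq\core(\sigma^{N+k_0}(T))$, so $\core(\sigma^{N+k_0}(T))$ contains a face of type $T^*$.

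Finally, apply $\sigma^N$ one more time. By Remark~\ref{rem:core} iterated, $\sigma^N(\core(\sigma^{N+k_0}(T)))\subseteq \core(\sigma^{2N+k_0}(T))$, so the image $\sigma^N(T^*)$ sits inside $\core(\sigma^{2N+k_0}(T))$. By the choice of $N$, $\sigma^N(T^*)$ contains a face of every type, so $k=2N+k_0$ works uniformly in $T$. The only delicate point in this plan is the topological inclusion $\core(P)\subseteq \core(Q)$ for a subpatch $P\subseteq Q$; once that is in hand, the remainder is pure bookkeeping with Remark~\ref{rem:core} and the definition of primitivity.
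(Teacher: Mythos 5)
Your proof is correct and follows essentially the same route as the paper's: get a nonempty core from the core property, then use primitivity together with the inclusion $\sigma(\core(\sigma^p(T)))\subseteq\core(\sigma^{p+1}(T))$ of Remark~\ref{rem:core} to populate the core with faces of every type. The step you flag as delicate --- that a face of $\core(P)$ for a subpatch $P\subseteq Q$ is also a face of $\core(Q)$ --- is indeed valid (such a face misses every vertex of $\partial P$, hence lies in the manifold interior of the disc $P$, which by invariance of domain is contained in the manifold interior of $Q$), and it is exactly the point the paper's proof elides when it asserts outright that $\core(\sigma^{k_0}(T))\neq\emptyset$ for an \emph{arbitrary} tile $T$, even though the core property is only hypothesized for one tile.
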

\begin{proof}
There exists some $k_0$ such that $\core(\sigma^{k_0}(T))\neq\emptyset$: there is a tile of type $T''$ in $\core(\sigma^{k_0}(T))$.
By primitivity, there exists some $k_1$ such that $\sigma^{k_1}(T'')$ contains 
a tile of type $T'$.
Since the core of $\sigma^{k_0+k_1}(T)$ contains the image under 
$\sigma^{k_1}$ of the core of $\sigma^{k_0}(T)$, we obtain that 
the core of $\sigma^{k_0+k_1}(T)$ contains a face of type $T'$. 
\end{proof}

\begin{defn}
Let $P$ be a patch and $P'$ be a subpatch. The patch $P$ is {\bf separable by} $P'$ if the complement of the closure of $P'$ in $P$,
$P\smallsetminus\overline{P'}$, is not connected. 
The patch $P$ is {\bf separated} if there exists a tile $T$ in $P$ such that 
$P$ is separable by $T$. 
\end{defn}
For instance, for the substitution $\sigma$ defined in Figure \ref{carre}, $\sigma(T)$ is not separated, but $\sigma^2(T)$ is separated.

\begin{defn}\label{def:pureprim}
A topological substitution $(\T,\sigma(\T),\sigma)$ is {\bf pure primitive} if:
\begin{enumerate}[(i)]
\item for all $T\in\mathcal{T}$, the patches $\sigma(T), \sigma^2(T)$ are not separated,
\item for all $T, T'\in\mathcal{T}$, the core of $\sigma(T)$ contains a face of type $T'$.
\end{enumerate}
A topological substitution $(\T,\sigma(\T),\sigma)$ is {\bf primitive} if there exists an integer $k$ such that $\sigma^k$  is pure primitive.
\end{defn}

Property (ii) of a pure primitive substitution $\sigma$ ensures that  
the complex $\sigma^{\infty}(T)$ exists for all $T\in\mathcal{T}$.

\begin{rem}
In the case of a ``usual'' substitutive tiling of the Euclidean plane, the condition that the substituted patches 
are not separated is insured by the linear expansion underlying the substitution. 
In our topological setting, in order to mimic the ``usual'' behavior,
we need to demand Property (i) as an {\em ad hoc} condition.
\end{rem}

\begin{lem}\label{lem:separable}
Let  $(\T,\sigma(\T),\sigma)$ be a pure primitive topological substitution, 
then for every integer $i\in\{1\dots d\}$ and for every integer $n\in\mathbb{N}$, 
the patch $\sigma^n(T_i)$ is not separated.
\end{lem}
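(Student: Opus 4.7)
I proceed by strong induction on $n$; the cases $n=1$ and $n=2$ are exactly condition~(i) of pure primitivity (and $n=0$ is trivial). For the inductive step, I fix $n\ge 2$, assume that $\sigma^k(T)$ is not separated for every $k\le n$ and every $T\in\mathcal{T}$, and suppose for contradiction that $\sigma^{n+1}(T_i)$ is separable by some tile $\tau$. The key topological fact underlying the argument is that, since $\sigma^{n+1}(T_i)$ is a topological disc and $\tau$ is a closed $2$-cell inside it, the set $\sigma^{n+1}(T_i)\setminus\overline\tau$ is disconnected if and only if $\partial\tau\cap\partial\sigma^{n+1}(T_i)$ has at least two connected components, viewed as a subset of the cycle $\partial\tau$.

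I first decompose $\sigma^{n+1}(T_i)=\sigma(\sigma^n(T_i))$, so that $\tau$ lies in exactly one super-tile $\sigma(T_{j_0})$ for some face $T_{j_0}$ of $\sigma^n(T_i)$. Set $A=\partial\tau\cap\partial\sigma(T_{j_0})$, which is a connected arc on the cycle $\partial\sigma(T_{j_0})$ by condition~(i) (since $\sigma(T_{j_0})$ is not separated); and $C_{j_0}=\partial T_{j_0}\cap\partial\sigma^n(T_i)$, a connected arc on $\partial T_{j_0}$ by the induction hypothesis applied to $\sigma^n(T_i)$, so that $\sigma(C_{j_0})=\partial\sigma(T_{j_0})\cap\partial\sigma^{n+1}(T_i)$ is likewise a connected arc on $\partial\sigma(T_{j_0})$. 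The separation condition, that $A\cap\sigma(C_{j_0})=\partial\tau\cap\partial\sigma^{n+1}(T_i)$ be disconnected, for two connected arcs on a cycle forces $A\cup\sigma(C_{j_0})=\partial\sigma(T_{j_0})$ with neither arc containing the other, equivalently $A\supseteq\sigma(\partial T_{j_0}\setminus C_{j_0})$: the boundary of $\tau$ must contain as a sub-path the entire \emph{interior boundary} of the super-tile $\sigma(T_{j_0})$, namely the edges of $\partial\sigma(T_{j_0})$ shared with neighboring super-tiles in $\sigma^{n+1}(T_i)$.

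The hardest step, which I expect to be the main obstacle, is to rule out this last configuration. My plan is to iterate the same analysis with the alternate decomposition $\sigma^{n+1}(T_i)=\sigma^2(\sigma^{n-1}(T_i))$ (available since $n\ge 2$), under which $\tau$ lies in a super-tile $\sigma^2(T'_{k_0})$ that is still not separated, again by condition~(i). The analogous conclusion will be that $\partial\tau$ contains the entire interior boundary $\sigma^2(\partial T'_{k_0}\setminus C'_{k_0})$, which is a path consisting of $\sum_{e\in\partial T'_{k_0}\setminus C'_{k_0}}|\sigma^2(e)|$ edges. Combining this with the core property~(ii)---which forces every prototile to appear in the core of each $\sigma(T)$, so that $\sigma$ cannot act trivially on interior structure---I expect this sum to strictly exceed $\max_i|\partial T_i|\ge|\partial\tau|$, contradicting the inclusion $\partial\tau\supseteq\sigma^2(\partial T'_{k_0}\setminus C'_{k_0})$. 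Establishing the strict-inflation statement for $\sigma^2$ required for the final count, presumably through an auxiliary induction that exploits primitivity together with the boundary-gluing data of compatibility, is the technical heart of the argument I anticipate.
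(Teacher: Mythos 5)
Your first reduction is sound and runs parallel to the paper's: you locate $\tau$ inside the super-tile $\sigma(T_{j_0})$, use condition~(i) to see that $\partial\tau\cap\partial\sigma(T_{j_0})$ is a single arc, use the induction hypothesis to see that $\partial\sigma(T_{j_0})\cap\partial\sigma^{n+1}(T_i)$ is a single arc, and conclude that $\partial\tau$ must contain the whole interior boundary of the super-tile. (The paper phrases the same conclusion as: $\sigma(T_{j_0})$ meets only one component $\Sigma_1$ of $\sigma^{n+1}(T_i)\smallsetminus\tau$, and, since $\sigma^{n+1}(T_i)$ is not separated by the super-tile, $\sigma(T_{j_0})=\tau\cup\Sigma_1$ and there are exactly two components.) The problem is the step you yourself flag as ``the technical heart'': your plan to rule out this configuration by an edge count is based on an inequality that is false in general. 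Pure primitivity does not force any edge expansion: a pre-substitution with $|\sigma(e)|=1$ for every edge $e$ can perfectly well have a nonempty core containing every type (e.g.\ a triangle whose image is a small central triangle surrounded by three quadrilaterals, each quadrilateral carrying exactly one boundary edge), and the paper even notes that such substitutions are automatically compatible. In that situation the interior boundary $\partial T'_{k_0}\smallsetminus C'_{k_0}$ may consist of a single edge and $\sum_{e}|\sigma^2(e)|=1$, which certainly does not exceed $\max_i|\partial T_i|$. So the ``strict-inflation statement for $\sigma^2$'' you hope to prove is not a consequence of the hypotheses, and the contradiction does not materialize.

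The paper closes the argument without any counting, by passing to the \emph{grandparent}. With $T'$ the face of $\sigma^{n-1}(T_i)$ whose image contains $\tau$ and $T''$ the face of $\sigma^{n-2}(T_i)$ whose image contains $T'$, one first gets (as above) $\sigma(T')=\tau\cup\Sigma_1$ with exactly two components $\Sigma_1,\Sigma_2$. Then $\sigma^2(T'')$ is not separated by $\tau$ (this is exactly why condition~(i) is imposed on $\sigma^2(T)$ as well as on $\sigma(T)$), so $\sigma^2(T'')$ meets at most one $\Sigma_i$; since it contains $\sigma(T')=\tau\cup\Sigma_1$ it cannot meet $\Sigma_2$, hence $\sigma^2(T'')\subseteq\tau\cup\Sigma_1=\sigma(T')$, forcing $\sigma^2(T'')=\sigma(T')$ and therefore $\sigma(T'')=T'$ --- a single tile, which has empty core, contradicting condition~(ii). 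You should replace your counting step with this (or an equivalent) use of the second-order super-tile; as it stands, your proof has a genuine gap at precisely the point where all the content of the lemma lies.
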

\begin{proof}
We make the proof by induction on $n$. By hypothesis, it is true for $n=1, 2$. 
Consider a patch $\sigma^n(T_i)$, and assume that it is separated by a 
(closed) tile $T$.
Let $\Sigma_1, \Sigma_2,\dots, \Sigma_p$, $p\geq 2$, be the connected components of  $\sigma^n(T_i)\smallsetminus T$.

Let $T'$ be the tile in $\sigma^{n-1}(T_i)$ such that $T$ lies in $\sigma(T')$,
and let $T''$ be the tile in $\sigma^{n-2}(T_i)$ such that $T'$ lies in $\sigma(T'')$.

Note that $\sigma(T')$ has nonempty intersection with at most one of
the $\Sigma_i$'$s$, say $\Sigma_1$:
otherwise $\sigma(T')$ would be separated by $T$, which contradicts pure 
primitivity. 
Then  $\sigma(T')$ must have nonempty intersection with  $\Sigma_1$
(otherwise, $\sigma(T')$ would have empty core).
By the induction hypothesis, $\sigma^{n-1}(T_i)$ is not separated by $T'$.
Thus, since $\sigma:\sigma^{n-1}(T_i)\rightarrow\sigma^{n}(T_i)$ is a homeomorphism, 
$\sigma^{n}(T_i)$ is not separated by $\sigma(T')$.
This implies that $\sigma(T')$ is the union of $T$ and $\Sigma_1$.

The patch $\sigma^2(T'')$ is not separated by $T$.  
Arguing as previously,
$\sigma^2(T'')$ has nonempty intersection with at most one of the $\Sigma_i$'$s$. 
Moreover $\sigma^2(T'')$ contains $\sigma(T')=T\cup\Sigma_1$.
Thus $\sigma^2(T'')=\sigma(T')$, and so  $\sigma(T'')=T'$ has empty
core: a contradiction. 
\end{proof}

The {\bf norm} of a patch $P$ modelled on $\T$ is defined as: 
$$||P||=\displaystyle\sum_{i=1}^dv_i|P|_i$$ 
where $|P|_i$ denotes the number of faces of type $T_i$ 
in the patch $P$.

Using the above equality (\ref{eq:M^nx}), we obtain directly

\begin{lem}\label{lem:perron} 
Let $(\T,\sigma(\T),\sigma)$ be a primitive topological substitution.
For any tile $T\in\T$ and for any integer $n\in\mathbb{N}$,
$||\sigma^n(T)||=\lambda^n ||T||.$ 
\end{lem}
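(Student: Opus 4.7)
The plan is to reduce the statement to the Perron--Frobenius computation in equation~(\ref{eq:M^nx}), by keeping track of how the vector of tile-type counts transforms under one application of $\sigma$.

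First, I would introduce the notation $[P]=(|P|_1,\dots,|P|_d)^t\in\N^d$ for the column vector whose $i$-th entry is the number of faces of type $T_i$ in a patch $P$ modelled on $\T$. With this notation, the norm defined just before the lemma reads $\|P\|=v\,[P]$, where $v=(v_1,\dots,v_d)$ is the left Perron--Frobenius eigenvector of $M_\sigma$ given by Theorem~\ref{th:PF} (which applies because $\sigma$ is primitive, hence some power of $M_\sigma$ has all positive entries). In particular, $[T_j]=e_j$ (the $j$-th standard basis vector), so $\|T_j\|=v_j$.

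Second, I would establish the key linearity property: for every patch $P$,
$$[\sigma(P)] = M_\sigma\,[P].$$
This is immediate from the definition of the transition matrix: applying $\sigma$ to $P$ replaces each face of type $T_k$ in $P$ by a copy of the patch $\sigma(T_k)$, which contains exactly $m_{i,k}$ faces of type $T_i$. Summing over $k$ yields
$$|\sigma(P)|_i = \sum_{k=1}^d m_{i,k}\,|P|_k = (M_\sigma\,[P])_i.$$
Note that no glueing effect can create or destroy faces, since $\sigma$ is a cellular map which is a homeomorphism on each $T_k$.

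Third, I would iterate: by induction on $n$, $[\sigma^n(T_j)] = M_\sigma^n\,[T_j] = M_\sigma^n e_j$. Applying the computation~(\ref{eq:M^nx}) gives
$$\|\sigma^n(T_j)\| = v\,M_\sigma^n e_j = \lambda^n\,v e_j = \lambda^n v_j = \lambda^n\|T_j\|,$$
which is the desired equality. There is no substantive obstacle here; the only thing worth verifying carefully is the linearity $[\sigma(P)]=M_\sigma[P]$, and that rests on the fact that the substitution acts tile by tile and that the iterated substitutions $\sigma^n(T_i)$ have been defined precisely so that no face is lost or duplicated when adjacent $\sigma(T_k)$-patches are glued together.
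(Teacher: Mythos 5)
Your proposal is correct and follows exactly the route the paper intends: the paper's ``proof'' is the single sentence that the lemma follows directly from equality~(\ref{eq:M^nx}), and your argument simply makes explicit the intermediate step $[\sigma^n(T)]=M_\sigma^n[T]$ that justifies applying that equality. No discrepancy to report.
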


\subsection{Minimality and primitive substitutive tilings}

In this subsection we consider a substitutive tiling of $\mathbb{M}$ and its associated CW complex. We now prove that a primitive substitutive tiling generates a minimal dynamical system. We use the definitions of Section \ref{spacetiling}.

\begin{prop}\label{prop:prim->min}
Let $(\mathcal{T},\sigma(\mathcal{T}),\sigma)$ be a primitive topological 
substitution and let $x$ be a geometrical 
realization of $\sigma^\infty(T)$ (where $T$ is a tile of $\mathcal{T}$). 
Then $(\Omega(x),\Gamma)$ is minimal, where $\Omega(x)$ is the hull of the tiling $x$.
\end{prop}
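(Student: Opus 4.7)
The plan is to invoke Gottschalk's Theorem (Proposition~\ref{prop:repet}(iii)): to prove that $(\Omega(x),\Gamma)$ is minimal, it suffices to show that $x$ is repetitive.

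First I would reduce to the pure primitive case. By Definition~\ref{def:pureprim}, there exists $k_0\in\mathbb{N}$ such that $\sigma^{k_0}$ is pure primitive. Since $(\sigma^{k_0})^\infty(T)=\sigma^\infty(T)$ (both are the same increasing union $\bigcup_n\sigma^n(T)$, merely reindexed along a cofinal subsequence in the former), replacing $\sigma$ by $\sigma^{k_0}$ does not change the tiling $x$. So I may assume throughout that $\sigma$ itself is pure primitive.

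Next, fix a patch $P$ in $x$ with protopatch $\hat{P}$. Since $x=\bigcup_n\lfloor\sigma^n(T)\rfloor$ is an increasing exhaustion of $\mathbb{M}$ and $P$ has compact support, $P\subseteq\lfloor\sigma^n(T)\rfloor$ for all sufficiently large $n$. By property~(ii) of pure primitivity, for every tile $T_i\in\mathcal{T}$ the core of $\sigma(T_i)$ contains a face of type $T$; iterating $\sigma$ a further $n$ times, every super-tile $\sigma^{n+1}(T_i)=\sigma^n(\sigma(T_i))$ contains a sub-patch equivalent to $\sigma^n(T)$, and hence a sub-patch equivalent to $P$. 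Thus every super-tile of order $n+1$ in $x$, regardless of its type, contains an equivalent copy of $P$.

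Finally I would invoke the super-tile decomposition of $x$ at level $n+1$ (Remark~\ref{rem:supertile}): $\mathbb{M}$ is tiled by super-tiles of order $n+1$, of which there are only $d$ types $\lfloor\sigma^{n+1}(T_1)\rfloor,\dots,\lfloor\sigma^{n+1}(T_d)\rfloor$. Setting $D=\max_i\diam\lfloor\sigma^{n+1}(T_i)\rfloor<\infty$ and $r=D$, for any $g\in\Gamma$ the centre $g\cdot O$ of $gB_r$ lies in some super-tile $S$ of order $n+1$; since $\diam(S)\leq D=r$, we have $S\subseteq gB_r$. As $S$ contains an equivalent copy of $P$, the protopatch $\hat{P}$ occurs in $gB_r$, proving that $x$ is repetitive. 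The hard part is mostly bookkeeping: (a) confirming that pure primitivity, iterated, really yields a copy of $P$ as a sub-patch inside each $\sigma^{n+1}(T_i)$ (which follows from the homeomorphism property of $\sigma$ together with property~(ii)); and (b) using the finiteness of super-tile types at a fixed level to obtain the \emph{uniform} diameter bound $D$ that converts ``every super-tile contains $P$'' into ``every ball of radius $r$ contains $P$''.
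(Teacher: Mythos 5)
Your proposal is correct and follows essentially the same route as the paper's proof: reduce to the pure primitive case, observe that property (ii) forces every super-tile $\lfloor\sigma^{k+1}(T_j)\rfloor$ to contain a copy of $\lfloor\sigma^{k}(T_i)\rfloor$ (hence of any given patch $P$ for $k$ large), and then use the super-tile decomposition of Remark~\ref{rem:supertile} together with the maximum diameter of the finitely many super-tile types to get the uniform radius needed for repetitivity, concluding by Gottschalk's Theorem. Your explicit justification of the reduction step ($(\sigma^{k_0})^\infty(T)=\sigma^\infty(T)$) is slightly more careful than the paper's one-line remark, but the argument is the same.
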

\begin{proof}
According to Theorem \ref{prop:repet}, it is sufficient to prove that $x$ is repetitive.

Up to replacing $\sigma$ by a power, we can suppose that $\sigma$ is pure primitive. 
In particular, for all $i,j\in\{1\dots d\},$ the tile $ T_i$ occurs in the support 
of the realization of the patch $ \sigma(T_j)$. 
Then for all $i,j\in\{1\dots d\}$, and for all $k\in\mathbb{N}$, 
$\lfloor \sigma^{k}(T_i)\rfloor$ occurs in $\lfloor \sigma^{k+1}(T_j)\rfloor$. 
Let $r=r(k)$ be $\displaystyle\max_{1\leq i\leq d}\diam\lfloor \sigma^{k+1}(T_i) \rfloor $, 
and $B(r)$ a ball of radius at least $r$ in $\mathbb{M}$.
By Remark \ref{rem:supertile}, $\mathbb{M}$ is naturally tiled by super-tiles of order $k+1$. 
Let $S$ be such a super-tile which contains the center of $B(r)$. 
Then $S$ is contained in $B(r)$, by definition of $r$. 
Thus there exists $j\in\{1\dots d\}$ such that $\lfloor \sigma^{k+1}(T_j)\rfloor$ occurs in $B(r)$.  
Therefore for every ball $B(r)$ of radius $r$ in $\mathbb{M}$, 
and for all $i\in\{1\dots d\}$ $\lfloor \sigma^k(T_i)\rfloor$ occurs in $B(r)$. 
This proves that $x$ is a repetitive tiling, since any patch of $x$ occurs in 
$\lfloor \sigma^K(T)\rfloor$ for some $K\in\mathbb{N}, T\in \{T_1\dots T_d\}.$
\end{proof}

This result is similar to the situation in symbolic dynamics, where the fixed point 
of a substitution is minimal if and only if the substitution is a primitive one, 
see \cite{Pyth.02}, or \cite{Kurk.03}. For instance in dimension 2, see 
\cite{Solom.97,Solom.99,Robin.04}.

\section{Primitive substitutive tilings of the hyperbolic plane}\label{sec:hyp}
In this section we give the proof of Theorem \ref{thm:prim}: 

\begin{thm}\label{thm:prim}
There does not exist a primitive substitutive tiling of the hyperbolic
plane $\Hyp^2$.
\end{thm}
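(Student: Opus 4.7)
I would prove the theorem by contradiction: suppose $x$ is a primitive substitutive tiling of $\Hyp^2$ associated to a primitive topological substitution $(\T,\sigma(\T),\sigma)$, and fix a tile $T\in\T$. The plan is to estimate the growth rates of the total number $a_n:=|\sigma^n(T)|$ of tiles in $\sigma^n(T)$ and of the number $b_n:=|\B(\sigma^n(T))|$ of tiles in its thick boundary, and then show that these two growths together violate the linear isoperimetric inequality that every region of $\Hyp^2$ must satisfy.

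First, by Lemma~\ref{lem:perron} one has $\|\sigma^n(T)\|=\lambda^n\|T\|$, where $\lambda>1$ is the Perron--Frobenius eigenvalue of the transition matrix $M_\sigma$; hence $a_n$ grows like $\lambda^n$ and, since the minimum hyperbolic area of a prototile is positive, the hyperbolic area of the realization $\lfloor\sigma^n(T)\rfloor$ is at least $c_1\lambda^n$ for some $c_1>0$. Second, since $\sigma$ maps $\partial T$ homeomorphically onto $\partial\sigma(T)$, iterating gives $\partial\sigma^n(T)=\sigma^n(\partial T)$, so the number $\ell_n$ of edges of $\partial\sigma^n(T)$ satisfies $\ell_{n+1}\leq M\ell_n$ with $M:=\max_{e\in E}|\sigma(e)|$, and therefore $\ell_n\leq M^n\ell_0$. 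Using the uniform bound $V$ on vertex valence in the complex (a necessary condition for geometrizability in $\Hyp^2$), one gets $b_n\leq V\ell_n\leq VM^n\ell_0$, and the hyperbolic perimeter of $\lfloor\sigma^n(T)\rfloor$ is at most $c_2M^n$.

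The key technical step is then to establish the strict inequality $M<\lambda$. The intuition is dimensional: a 2-dimensional substitution should grow faces strictly faster than their bounding edges. Quantitatively, this uses the core property of the primitive substitution $\sigma$: after replacing $\sigma$ by a high enough power, Lemma~\ref{prim} ensures that $\sigma(T_i)$ has non-empty core for every tile $T_i$, so that $\sigma(T_i)$ has strictly more tiles than its thick boundary for every $i$. A comparison of the Perron--Frobenius eigenvalue of the edge-substitution matrix with that of the face-substitution matrix then yields $M<\lambda$. Carrying out this comparison rigorously from the combinatorial data alone, using only primitivity and the core property, is what I expect to be the main obstacle of the proof.

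Once $M<\lambda$ is at hand, one applies the linear isoperimetric inequality characteristic of $\Hyp^2$: there exists a universal $K>0$ such that every embedded topological disc $D\subset\Hyp^2$ satisfies $\mathrm{Area}(D)\leq K\cdot\mathrm{Length}(\partial D)$. Applied to the disc $\lfloor\sigma^n(T)\rfloor$, this yields $c_1\lambda^n\leq Kc_2M^n$, i.e.\ $(\lambda/M)^n\leq Kc_2/c_1$ for every $n\in\N$. But $\lambda>M$ forces the left-hand side to diverge, giving the desired contradiction and proving that no primitive substitutive tiling of $\Hyp^2$ can exist.
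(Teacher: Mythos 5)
Your overall strategy --- compare the exponential growth of the area of $\lfloor\sigma^n(T)\rfloor$ with that of its boundary and contradict the linear isoperimetric inequality of $\Hyp^2$ --- is exactly the paper's, but the step you yourself flag as ``the main obstacle'' is a genuine gap, and the quantity you propose to control is the wrong one. You bound the thick boundary by $b_n\leq V\ell_n\leq VM^n\ell_0$ with $M=\max_{e\in E}|\sigma(e)|$ and then need $M<\lambda$. There is no reason for this inequality to hold: $M$ is the worst-case one-step expansion of a single edge, and nothing in primitivity or the core property prevents some edge from having a long image while the face count grows only at rate $\lambda$. Even when $M\geq\lambda$ the theorem is still true, because $M^n\ell_0$ is only a crude upper bound on the actual boundary size; the relevant quantity is the true growth rate of the thick boundary, not of the iterated edge substitution. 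So the ``comparison of Perron--Frobenius eigenvalues of the edge- and face-substitution matrices'' you invoke cannot be carried out as stated, and the proof does not close.

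The paper's way around this is to bound the thick boundary recursively rather than through the $1$-skeleton. One observes that $\B(\sigma^{k}(T))\subseteq\bigcup_{f\ \text{face of}\ \B(\sigma^{k-1}(T))}\B(\sigma(f))$ (see Remark~\ref{rem:core}), sets $\alpha=\min_i \|\core(\sigma(T_i))\|/\|\sigma(T_i)\|>0$ (positivity is exactly where primitivity and the core property enter), and deduces $\|\B(\sigma(f))\|\leq(1-\alpha)\lambda\|f\|$, hence by induction $\|\B(\sigma^k(T))\|\leq((1-\alpha)\lambda)^k\|T\|=(1-\alpha)^k\|\sigma^k(T)\|$ (Lemma~\ref{lem:bord-tout}). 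This gives a boundary-to-area ratio decaying like $(1-\alpha)^n$ with no need for any inequality between edge and face growth rates. Two smaller points you skip over: the paper needs Lemma~\ref{lem:separable} (non-separatedness of $\sigma^n(T)$) to organize the boundary faces into a cycle and produce an honest piecewise-$\mathcal{C}^1$ simple closed curve $\gamma_n$ to which the isoperimetric inequality applies, and Lemma~\ref{lem:curv-int} to guarantee $\gamma_n$ still encloses $\lfloor\sigma^{n-n_0}(T)\rfloor$; your appeal to ``the boundary of the disc $\lfloor\sigma^n(T)\rfloor$'' silently assumes this curve is regular enough, which requires an argument since tiles are a priori only compact sets.
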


First of all, we recall some facts about the isoperimetric inequality in $\Hyp^2$.

\subsection{Isoperimetric inequality in $\Hyp^2$}
Let $D$ be a domain of $\Hyp^2$. We denote by $\mathcal{A}(D)$ the area of the domain $D$.
Let $\mathcal{C}$ be a piecewise $\mathcal{C}^1$ curve in $\Hyp^2$. 
We denote by $L(\mathcal{C})$ the length of $\mathcal{C}$. 
If moreover $\mathcal{C}$ is a simple closed curve, 
$\mathcal{A}(\mathcal{C})$ will denote the area of the bounded 
component of the complement of $\mathcal{C}$ in $\Hyp^2$.

\begin{prop}[Isoperimetric inequality]\label{lem:gro}
Let  $\mathcal{C}$ be a piecewise $\mathcal{C}^1$ simple curve in $\Hyp^2$. Then:
$$\mathcal{A}(\mathcal{C})\leq  L(\mathcal{C}). $$
\end{prop}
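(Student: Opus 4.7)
The plan is to derive the inequality as an immediate consequence of Stokes' theorem, using an explicit primitive of the area form whose pointwise hyperbolic norm is bounded by $1$.

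First I work in the upper half-plane model $\Hyp^2 = \{(x,y)\in\R^2 : y>0\}$ with hyperbolic metric $g = (dx^2+dy^2)/y^2$ and area form $dA = dx\wedge dy/y^2$. I introduce the $1$-form
$$\omega = \frac{dx}{y}$$
and check by a direct computation that $d\omega = d(1/y)\wedge dx = -y^{-2}\,dy\wedge dx = y^{-2}\,dx\wedge dy = dA$. So $\omega$ is a globally defined primitive of the area form on $\Hyp^2$.

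Next I compute the pointwise norm of $\omega$ with respect to the dual metric: since $g_{ij} = y^{-2}\delta_{ij}$, the inverse metric on the cotangent bundle is $g^{ij} = y^2\delta^{ij}$, hence $|dx|_g = y$ and therefore $|\omega|_g = |dx|_g / y = 1$ at every point of $\Hyp^2$.

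Now let $D$ be the bounded connected component of $\Hyp^2\setminus\mathcal{C}$; since $\Hyp^2$ is homeomorphic to $\R^2$, $D$ is a topological disc with $\partial D = \mathcal{C}$. Applying Stokes' theorem (which is valid on $D$ despite the finitely many corners of $\mathcal{C}$, by subdividing $\mathcal{C}$ into its smooth arcs and summing the contributions), and using the Cauchy-Schwarz inequality $\omega(\tau)\leq |\omega|_g \cdot |\tau|_g$ pointwise for a unit tangent $\tau$ to $\mathcal{C}$, I obtain
$$\mathcal{A}(\mathcal{C}) \;=\; \int_D d\omega \;=\; \int_\mathcal{C}\omega \;=\; \int_\mathcal{C}\omega(\tau)\,ds \;\leq\; \int_\mathcal{C}|\omega|_g\,ds \;=\; \int_\mathcal{C}1\,ds \;=\; L(\mathcal{C}),$$
which is the desired inequality. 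The only nontrivial step is guessing the primitive $\omega = dx/y$; once that is in hand, the proof is just Stokes' theorem together with a one-line estimate of $|\omega|_g$, and the piecewise $\mathcal{C}^1$ regularity of $\mathcal{C}$ is clearly sufficient for these manipulations.
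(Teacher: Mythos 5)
Your proof is correct. Note first that the paper does not actually prove this proposition: it is stated as a known fact with a pointer to the literature (Gromov, Bridson--Haefliger), so any complete argument you give is necessarily ``different'' from the paper's treatment. Your argument is the standard calibration proof of the linear isoperimetric inequality in $\Hyp^2$: the computations $d\omega = dA$ for $\omega = dx/y$ and $|\omega|_g = 1$ are both right, and Stokes' theorem on the bounded Jordan domain $D$ (whose closure is compact in $\Hyp^2$ and whose boundary is piecewise $\mathcal{C}^1$) gives exactly $\mathcal{A}(\mathcal{C}) = \int_{\partial D}\omega \leq \int_{\mathcal{C}} |\omega|_g\,ds = L(\mathcal{C})$. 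Two cosmetic points: the curve must be \emph{closed} for $\mathcal{A}(\mathcal{C})$ to be defined (the paper's preceding paragraph makes this explicit even though the proposition's wording omits ``closed''), and strictly speaking Stokes produces $\int_{\partial D}\omega$ with a choice of orientation, so one should bound $\bigl|\int_{\mathcal{C}}\omega\bigr|$ by $\int_{\mathcal{C}}|\omega(\tau)|\,ds$; neither affects the conclusion. The benefit of your route is that it makes the proposition self-contained and makes transparent why the constant is $1$, whereas the paper's citation buys brevity and defers to sources that prove the sharper inequality $L^2 \geq 4\pi\mathcal{A} + \mathcal{A}^2$.
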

For references see for instance \cite{Gro.86,Brid.Haef.99} and the references therein.

\subsection{Proof of Theorem \ref{thm:prim}}

\begin{lem}\label{lem:bord-tout}
Let $\sigma$ be a primitive topological substitution. 
There exists $\alpha, 0<\alpha<1$, such that for every tile $T$ 
such that the core of $\sigma(T)$ contains a face of type $T$:
\begin{equation}\label{eq:B-sigma}
||\B(\sigma^{k}(T))||\leq (1-\alpha)^k||\sigma^k(T)||.
\end{equation}
\end{lem}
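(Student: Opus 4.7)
The plan is to prove exponential decay of the ratio $r_k := \|\B(\sigma^k(T))\| / \|\sigma^k(T)\|$. I first reduce to the pure primitive case: Definition~\ref{def:pureprim} supplies an integer $N$ with $\sigma^N$ pure primitive, and the analogous bound for $\sigma^N$ entails the statement for $\sigma$ after routine bookkeeping using the monotonicity $r_{k+1}\leq r_k$. This monotonicity itself is a direct consequence of Remark~\ref{rem:core} (which gives $\|\B(\sigma^{k+1}(T))\|\leq \lambda \|\B(\sigma^k(T))\|$) combined with $\|\sigma^{k+1}(T)\|=\lambda\|\sigma^k(T)\|$ from Lemma~\ref{lem:perron}. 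I therefore assume henceforth that $\sigma$ itself is pure primitive.

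The heart of the argument is a face-by-face sharpening of Remark~\ref{rem:core}. Let $f$ be a face of $\sigma^k(T)$ and let $g$ be a face of $\sigma(f)$ lying in $\core(\sigma(f))$. Then every vertex of $g$ is interior to $\sigma(f)$. But in the construction of $\sigma^{k+1}(T)$ the neighbouring patches $\sigma(f')$ are glued only along their boundaries, so these vertices remain interior to $\sigma^{k+1}(T)$, which forces $g\in\core(\sigma^{k+1}(T))$. Combined with the global inclusion of Remark~\ref{rem:core}, this yields
$$\|\B(\sigma^{k+1}(T))\|\;\leq\;\sum_{f\in\B(\sigma^k(T))}\|\B(\sigma(\type(f)))\|.$$
Set $\gamma := \max_{i}\, \|\B(\sigma(T_i))\|/\|\sigma(T_i)\|$. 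Pure primitivity makes $\core(\sigma(T_i))$ nonempty for every $i$ (it even contains a face of each type), so $\gamma<1$. Factoring out $\gamma$ and using $\|\sigma(T_i)\|=\lambda v_i$ (Theorem~\ref{th:PF}) together with Lemma~\ref{lem:perron} transforms the previous inequality into the recursion
$$r_{k+1}\;\leq\;\gamma\, r_k.$$
Iterating from $r_0=1$ gives $r_k\leq \gamma^k$, and setting $\alpha:=1-\gamma\in(0,1)$ completes the pure primitive case; the reduction from the first paragraph then delivers the result in general.

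The main obstacle is precisely the sharpened inclusion: Remark~\ref{rem:core} only says that the core of $\sigma^k(T)$ inflates into the core of $\sigma^{k+1}(T)$ \emph{globally}, whereas I need the \emph{local} statement that the core of each inflated tile $\sigma(f)$ survives into the core of the whole patch after all the gluings have taken place. This local control is what produces the nontrivial multiplicative gain $\gamma<1$ at each step, as opposed to the trivial ratio $1$, and rests on a careful inspection of which vertices get identified by the boundary gluings of a compatible pre-substitution (see Section~\ref{sec:top}).
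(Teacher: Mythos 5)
Your proof is correct and follows essentially the same route as the paper's: the same key inclusion $\B(\sigma^{k+1}(T))\subseteq\bigcup_{f\in\B(\sigma^k(T))}\B(\sigma(f))$ (which the paper justifies only by a figure and you argue in more detail), the same constant $\gamma=1-\alpha=\max_i\|\B(\sigma(T_i))\|/\|\sigma(T_i)\|$, and the same recursion closed via Lemma~\ref{lem:perron}. Your explicit reduction to the pure primitive case is a worthwhile refinement rather than a different approach, since the paper's assertion that $\alpha>0$ tacitly requires every $\core(\sigma(T_i))$ to be nonempty, which primitivity alone does not guarantee.
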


\begin{proof}
Let $\sigma$ be a primitive topological substitution,
and let $T$ be a tile such that the core of $\sigma(T)$ contains
a face of type $T$.
We remark that for any $k>1$
$$\B(\sigma^{k}(T))\subset \displaystyle
\bigcup_{f\; \text{face of}\; \B(\sigma^{k-1}(T))}\B(\sigma(f))$$
(see Figure \ref{fig:core}).
Thus 
\begin{equation}\label{eq:boundary}
||\B(\sigma^{k}(T))||\leq \displaystyle
\sum_{f\; \text{face of}\;  \B(\sigma^{k-1}(T))}||\B(\sigma(f))||.
\end{equation}
We define $$\alpha=\min_{i\in\{1,\dots,d\}} 
\frac{||\core(\sigma(T_i))||}{||\sigma(T_i)||}.$$
We notice that $0<\alpha<1$ (where the first inequality comes from
the primitivity of $\sigma$). 
Then we obtain for any $f$: 
$$||\B(\sigma(f))||\leq (1-\alpha)||\sigma(f)||=(1-\alpha)\lambda||f||,$$
where the last equality follows from Lemma \ref{lem:perron}.
Combining with (\ref{eq:boundary}), we obtain:
$$||\B(\sigma^{k}(T))||\leq (1-\alpha)\lambda||\B(\sigma^{k-1}(T))||.$$
We conclude, by induction, that:
$$||\B(\sigma^{k}(T))||\leq ((1-\alpha)\lambda)^k||T||.$$
Using again Lemma \ref{lem:perron}, we deduce that:

$$||\B(\sigma^{k}(T))||\leq (1-\alpha)^k||\sigma^k(T)||.$$

\end{proof}

\begin{lem}\label{lem:curv-int}
Let  $x=(\mathbb{M},\Gamma,\mathcal{P},\textsf{T})$ be a tiling. 
For every $r>0$, there exists $C=C(r)>0$, 
such that any ball of radius $r$ in $\mathbb{M}$ intersects at most $C$ tiles of $\textsf{T}$. 
\end{lem}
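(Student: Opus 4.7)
The plan is to use a volume (area) comparison argument that exploits the finiteness of the prototile set $\mathcal{P}$. Two uniform quantities can be extracted from $\mathcal{P}$: first, a maximum diameter $D = \max_{P \in \mathcal{P}} \diam(P)$, and second, a minimum area $a = \min_{P \in \mathcal{P}} \mathcal{A}(P)$. The constant $a$ is strictly positive because each prototile is represented by a tile that is the closure of its (non-empty) interior, and a minimum over a finite family of positive numbers is positive. Since every tile $T \in \textsf{T}$ is the isometric image of a prototile representative, one has $\diam(T) \leq D$ and $\mathcal{A}(T) \geq a$ uniformly across the tiling.

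Now fix a ball $B$ of radius $r$ centered at a point $O' \in \mathbb{M}$. Any tile $T \in \textsf{T}$ that meets $B$ satisfies $T \subseteq B'$, where $B'$ is the ball of radius $r+D$ centered at $O'$: indeed, a point of $B \cap T$ lies within distance $\diam(T) \leq D$ of every point of $T$, and within distance $r$ of $O'$. Because the tiles in $\textsf{T}$ have pairwise disjoint interiors (by the definition of a tiling), areas add up, and so
$$ \#\{T \in \textsf{T} : T \cap B \neq \emptyset\} \cdot a \;\leq\; \sum_{T \cap B \neq \emptyset} \mathcal{A}(T) \;\leq\; \mathcal{A}(B'). $$

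The area $\mathcal{A}(B')$ depends only on $r+D$ (since $\mathbb{M}=\mathbb{E}^2$ or $\mathbb{H}^2$ is homogeneous, so the area of a ball depends only on its radius), and $D, a$ are fixed by the data of $\mathcal{P}$. Thus setting $C(r) = \mathcal{A}(B(r+D))/a$ yields the desired bound independent of the center of $B$. There is no real obstacle here; the only point requiring care is the positivity of $a$, which is guaranteed by the definition of a tile as the closure of its interior combined with the finiteness of $\mathcal{P}$.
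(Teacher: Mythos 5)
Your proof is correct and follows essentially the same volume-comparison argument as the paper: both enlarge the ball to radius $r+D$ with $D$ the maximum prototile diameter, and both use disjointness of tile interiors to bound the count by $\mathcal{A}(B(r+D))$ divided by a uniform positive lower bound per tile. The only cosmetic difference is that the paper divides by the area of a ball of radius $r_0$ inscribed in each prototile, whereas you divide directly by the minimum prototile area.
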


\begin{proof}
The set of prototiles $\mathcal{P}$ is finite. 
We choose a representative of each prototile: $T_1,\dots, T_d$. 
We put $D=\displaystyle\max_{1\leq i\leq d}{\diam{(T_i)}}$. 
Since the tiles $T_i$ have nonempty interior, 
there exists $r_0>0$ such that each tile $T_i$ contains a ball of radius $r_0$. 
We denote the area of the ball of radius $r$ in $\mathbb{M}$ by $\mathcal{A}(r)$.

We define: 
$$C(r)=\frac{\mathcal{A}(r+D)}{\mathcal{A}(r_0)}.$$

Let $B$ be a ball of radius $r$, and $N(B)$ the number of tiles of $\textsf{T}$ intersected by $B$. Then the ball of radius $r+D$ with the same center as $B$ contains $N(B)$ disjoints balls of radius $r_0$. Thus $N(B)\leq C(r).$  
\end{proof}

\begin{prop}\label{prop:long-courb}
Let $x=(\M,\Gamma, \mathcal{P},\textsf{T})$ be a tiling of $\mathbb{M}$ obtained as a geometric realization of a topological substitution $(\mathcal{T},\sigma(\mathcal{T}),\sigma): x=\lfloor \sigma^\infty(T)\rfloor$. There exist a constant $A>0$ and an integer $n_0$  such that for every integer $n> n_0$, there exists a simple closed curve $\gamma_n$ in $\M$
such that:
\begin{itemize}
\item 
$\lfloor\sigma^{n-n_0}(T)\rfloor$ is contained in the bounded component of
$\M\smallsetminus \gamma_n$, 
\item the length $L(\gamma_n)$ satisfies: $L(\gamma_n)\leq A||\B(\sigma^n(T))||.$
\end{itemize}
\end{prop}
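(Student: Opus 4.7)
The plan is to take $\gamma_n = \lfloor \partial \sigma^n(T)\rfloor$, the boundary curve of the geometric realization of $\sigma^n(T)$ itself, and to take $n_0 = 1$. Since $\sigma^n(T)$ is a patch (hence homeomorphic to a closed disc) and the tiling is polygonal, $\lfloor \sigma^n(T)\rfloor$ is a closed topological disc in $\M$ whose boundary $\gamma_n$ is a piecewise-geodesic simple closed curve; the bounded component of $\M \smallsetminus \gamma_n$ is its interior $\mathring{\lfloor \sigma^n(T)\rfloor}$.

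To establish the containment, I would show by induction on $n$ that $\sigma^{n-1}(T)$ is identified with a subcomplex of $\core(\sigma^n(T))$. The base case $n=1$ is built into the inflation procedure of Section \ref{sec:inflation}, where by hypothesis $T$ is a subcomplex of $\core(\sigma(T))$. For the inductive step, applying $\sigma$ to the inclusion $\sigma^{n-1}(T)\subseteq \core(\sigma^n(T))$ and invoking the first inclusion of Remark \ref{rem:core} gives $\sigma^n(T)\subseteq \sigma(\core(\sigma^n(T)))\subseteq \core(\sigma^{n+1}(T))$. By the definition of $\B$, no face of $\core(\sigma^n(T))$ contains a vertex of $\partial \sigma^n(T)$, and one checks that a face containing no such vertex is entirely disjoint from $\partial \sigma^n(T)$ in the geometric realization (it cannot share an edge either, as that would force the face to contain the edge's endpoints). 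Hence $\lfloor \sigma^{n-1}(T)\rfloor$ is disjoint from $\gamma_n$, so it lies in $\mathring{\lfloor \sigma^n(T)\rfloor}$, the bounded component.

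For the length estimate, every edge of $\gamma_n$ is a boundary edge of $\sigma^n(T)$, and the unique face of $\sigma^n(T)$ adjacent to it has both endpoints on $\partial \sigma^n(T)$, hence belongs to $\B(\sigma^n(T))$. Setting $N_{\max} = \max_i n_i$ (the maximum number of edges of a prototile) and $L_{\max}$ an upper bound for the geometric length of any edge of any prototile realization (both finite since $\mathcal{P}$ is finite), each face of $\B(\sigma^n(T))$ contributes at most $N_{\max}$ boundary edges of length at most $L_{\max}$, so
\[
L(\gamma_n) \;\leq\; N_{\max}\,L_{\max}\,|\B(\sigma^n(T))|,
\]
where $|\cdot|$ is the total number of faces. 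To pass to the Perron--Frobenius norm, Theorem \ref{th:PF} gives $v_{\min} := \min_i v_i > 0$, whence $|\B(\sigma^n(T))| = \sum_i |\B(\sigma^n(T))|_i \leq \|\B(\sigma^n(T))\|/v_{\min}$, and the constant $A = N_{\max}\,L_{\max}/v_{\min}$ yields the required estimate. No serious obstacle arises; the argument is essentially combinatorial bookkeeping, the only mildly subtle point being the propagation of the base core property through Remark \ref{rem:core}, which gives the tight value $n_0 = 1$.
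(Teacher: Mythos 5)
Your construction is genuinely different from the paper's. The paper does not take $\gamma_n$ to be the boundary of the realized patch: it marks a point in the interior of each prototile, orders the faces $K_0,\dots,K_p$ of $\B(\sigma^n(T))$ cyclically so that consecutive ones share an edge (invoking Lemma \ref{lem:separable}, hence primitivity), joins consecutive marked points by geodesic arcs of length at most $2D$ (with $D$ the maximal tile diameter), and extracts from the resulting closed piecewise-geodesic curve a simple closed subcurve $\gamma_n$. The price is a larger $n_0$ (the integer part of $(C(2D)+1)/2$, obtained from Lemma \ref{lem:curv-int} and Remark \ref{rem:path}, needed to guarantee that the geodesic chords do not cut into $\lfloor\sigma^{n-n_0}(T)\rfloor$). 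Your route, when it applies, is cleaner: the containment comes from the core property via Remark \ref{rem:core} (your induction giving $\sigma^{n-1}(T)\subseteq\core(\sigma^n(T))$ is correct, as is the observation that a face of the core meets a boundary edge in at most its endpoints), you get $n_0=1$, and you never need the cyclic ordering of the boundary faces, so you avoid the appeal to Lemma \ref{lem:separable}.

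There is, however, a genuine gap: you assume the tiling is polygonal, and the statement does not. A tile in this paper is only a compact set equal to the closure of its interior, and a geometric realization of a topological substitution is not required to realize edges by geodesic segments (the paper reserves the term \emph{polygonal tiling} for that special case and imposes it neither here nor in Theorem \ref{thm:prim}, which this proposition serves). For general tiles the edges of $\lfloor\partial\sigma^n(T)\rfloor$ need not be rectifiable, so $L(\gamma_n)$ may be infinite and the bound $L(\gamma_n)\le N_{\max}L_{\max}\,|\B(\sigma^n(T))|$ has no content; moreover $\gamma_n$ need not be piecewise $\mathcal{C}^1$, which is required when the isoperimetric inequality (Proposition \ref{lem:gro}) is applied to $\gamma_n$ in the proof of Theorem \ref{thm:prim}. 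This is precisely the difficulty the marked-point construction circumvents: the chords joining marked points are geodesic regardless of the shape of the tiles. Your argument is complete and correct under the extra hypothesis that the tiles are polygons (or merely that realized edges are piecewise $\mathcal{C}^1$ of uniformly bounded length), but as written it does not prove the proposition in the stated generality.
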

\begin{proof}
First we number the faces of $\B(\sigma^n(T))$: $K_0,\dots,K_p$, in such a way that $K_i, K_{i+1}$ share a common edge (with the convention that $p+1=0$).
This is possible thanks to Lemma \ref{lem:separable}.

For each prototile of $\mathcal{P}$, we fix a representative and mark a point in its interior.
Using the action of $\Gamma$, each tile $T$ of $\textsf{T}$ has now a marked point $m_T$. 
Let $s_i$ be the geodesic arc joining the marked point of the geometric realization of $K_i$ 
to the marked point of the geometric realization of $K_{i+1}$: 
$s_i=[m_{\lfloor K_i\rfloor}, m_{\lfloor K_{i+1}\rfloor}]$. 
The length $L(s_i)$ of $s_i$ is bounded by $2D$, 
where $D=\displaystyle\max_{1\leq i\leq d}{\diam{(T_i)}}$. 
By Lemma \ref{lem:curv-int}, $s_i$ intersects at most $C=C(2D)$ tiles of $\textsf{T}$. 
Thus $s_i$ does not intersect the geometric realization of $\sigma^{n-n_0}(T)$ 
with $n_0$ the integer part of $(C+1)/2$, see Remark \ref{rem:path}. 
Hence the closed curve $\eta_n$ obtained by concatening the segments 
$s_0,\dots, s_p$ does not intersect the geometric realization of $\sigma^{n-n_0}(T)$. 
The curve $\eta_n$ is not necessarily simple, but it contains, 
as a subset, a simple closed curve $\gamma_n$, 
the bounded component of the complementary in $\M$ of which contains $\lfloor \sigma^{n-n_0}(T)\rfloor $.
Since $\eta_n$ is piecewise $\mathcal{C}^1$, $\gamma_n$ 
is also piecewise $\mathcal{C}^1$. 
Of course the length of $\gamma_n$ is bounded by the length of $\eta_n$.

Let $N_n$ be the number of faces in $\B(\sigma^n(T))$. 
Then $L(\gamma_n)\leq 2DN_n$.
Since $$N_n\leq ||\B(\sigma^n(T)||/\displaystyle\min_{1\leq i\leq d}{v_i},$$
using the notation of Section \ref{subsec:pf},
the Proposition is proved with
$A=2D/\displaystyle\min_{1\leq i\leq d}{v_i}$.

\end{proof}

\begin{proof}[Proof of Theorem \ref{thm:prim}]
We consider such a primitive substitutive tiling of $\mathbb{H}^2$, 
and we use the notation of Proposition \ref{prop:long-courb}. Then:
$$
\begin{array}{rcll}
  L(\gamma_n)&\leq& A||\B(\sigma^n(T))||& \text{ by Proposition \ref{prop:long-courb} }\\
  & \leq & A(1-\alpha)^n||\sigma^n(T)||& \text{ by Lemma \ref{lem:bord-tout} }\\
  & \leq & A(1-\alpha)^n\lambda^n||T||& \text{ by Lemma \ref{lem:perron}, }
\end{array}
$$
with $0<\alpha<1$.
Besides:
$$
\begin{array}{rcll}
\mathcal{A}(\gamma_n)&\geq& \mathcal{A}(\lfloor \sigma^{n-n_0}(T)\rfloor) & \text{ by the proof of Proposition \ref{prop:long-courb}}\\
  & \geq & \displaystyle\frac{\displaystyle\min_{1\leq i\leq d}\mathcal{A}(T_i)}{\displaystyle\max_{1\leq i\leq d}v_i}||\sigma^{n-n_0}(T)|| & \text{ using notations of Section \ref{subsec:pf}  }\\
  & \geq & \frac{\displaystyle\min_{1\leq i\leq d}\mathcal{A}(T_i)}{\displaystyle\max_{1\leq i\leq d}v_i}\lambda^{n-n_0}||T||. & \text{ by Lemma \ref{lem:perron}.  }
\end{array}
$$
We derive $$L(\gamma_n)\leq C(1-\alpha)^n\mathcal{A}(\gamma_n)$$
with $C=A\lambda^{n_0}\frac{\displaystyle\max_{1\leq i\leq d}v_i}{\displaystyle\min_{1\leq i\leq d}\mathcal{A}(T_i)}$.
This is a contradiction to the isoperimetric inequality of Lemma~\ref{lem:gro}.
\end{proof}

\begin{figure}[ht]
\begin{center} 
\includegraphics[width= 11cm]{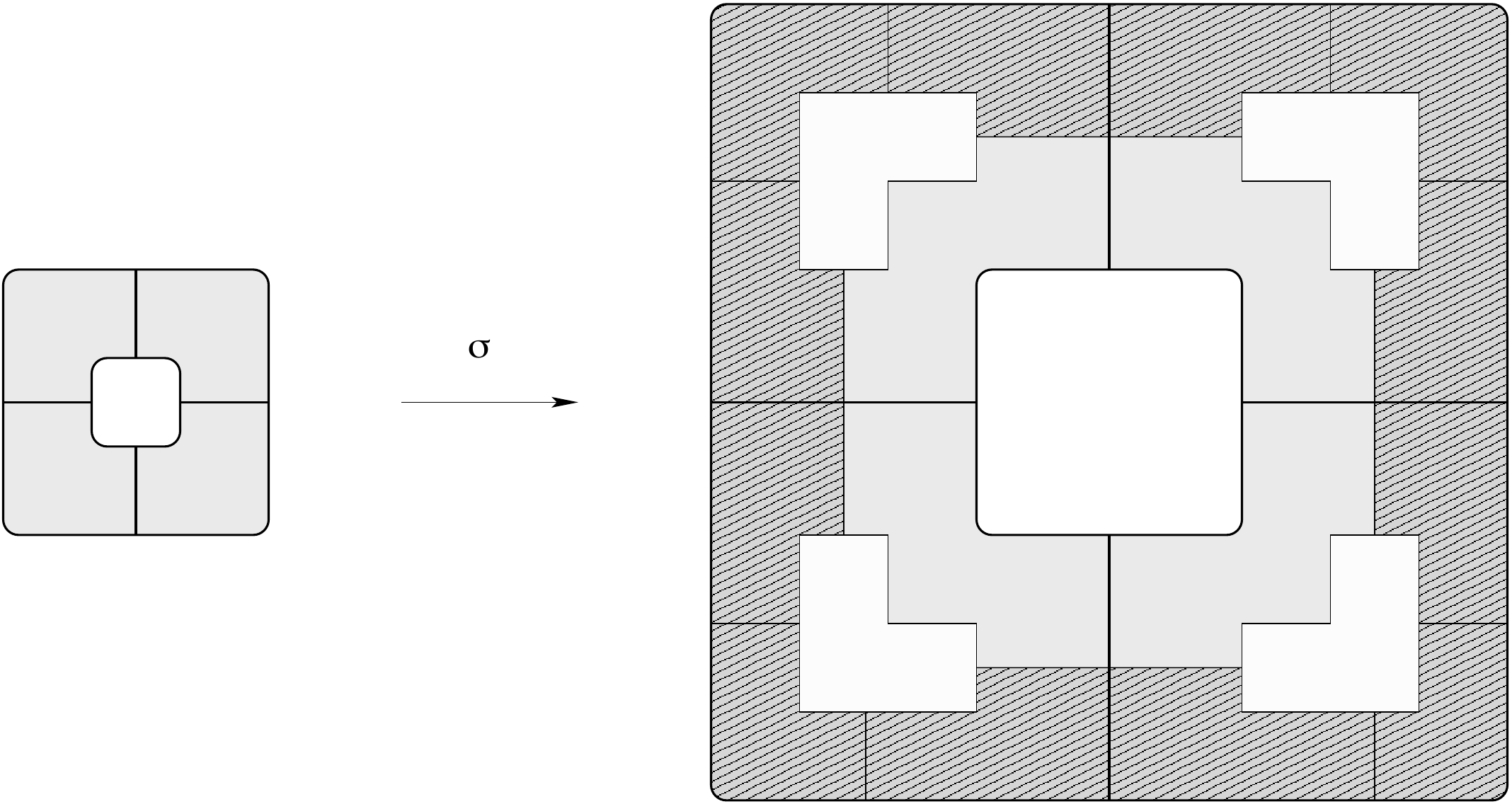}
\end{center}
\caption{The thick boundary of $\sigma^{k}(T)$ (hatched on the picture)
is contained in the union of the thick boundaries of the images of the
tiles in the thick boundary of $\sigma^{k-1}(T)$ (in grey on the picture).
}\label{fig:core}
\end{figure}

\section{Bounded valence}\label{sec:bound}

Let $(\M,\Gamma, \mathcal{P},\textsf{T})$ be a polygonal
tiling of the plane $\M$, and
let $X$ be the complex associated to this tiling.
We suppose that the set of prototiles $\mathcal{P}$
is finite.
Then the valence of each vertex of $X$ is bounded 
by $2\pi/\theta$, where $\theta>0$ is the minimum
of the angles of the (polygonal) prototiles.

Let $(\mathcal{T},\sigma(\mathcal{T}),\sigma)$ be a 
topological substitution, and let 
$X=\sigma^\infty(T)$ a complex obtained by inflation
from $\sigma$.
A necessary condition for $X$ 
to be geometrizable is that the valence of the
vertices of $X$ is bounded. In this section, we investigate this topic. 
For that, we introduce two graphs $\mathcal{V}(\sigma)$
and $\mathcal{C}(\sigma)$ associated to $\sigma$. 
We need the notions introduced in this section to study in Section \ref{sec:subst-pav} 
the substitutions $\alpha$ and $\beta$. We will give explicitly the associated 
graphs $\mathcal{C}(\alpha)$ and $\mathcal{C}(\beta)$.

\subsection{Heredity graph of vertices  $\mathcal{V}(\sigma)$}

Let $(\mathcal{T},\sigma(\mathcal{T}),\sigma)$
be a topological substitution.
We denote by $V$ the set of vertices of the tiles
of $\mathcal{T}$:
$$V=\bigcup_{T\in \mathcal{T}}\{v:\; v \textrm{ vertex of } T\}.$$
The {\bf heredity graph of vertices} is an oriented
graph denoted by 
$\mathcal{V}(\sigma)$. 
The set of vertices of $\mathcal{V}(\sigma)$
is the set $V$. Let $v\in V$ be a vertex of a tile $T\in\mathcal{T}$ and $v'\in V$ be a vertex of a tile $T'$.
There is an oriented edge from $v$ to $v'$ in  $\mathcal{V}(\sigma)$
if $\sigma(v)$ is a vertex of type $v'$ of a tile of type $T'$.

\subsection{Control of the valence}
A vertex $v\in V$ is a {\bf divided vertex} if there are
at least two oriented edges in $\mathcal{V}(\sigma)$
coming out of $v$. 
We denote by $V_D$ the subset of $V$ which consists
of all divided vertices.

Let $M$ be the maximum of the valence of the vertices
in the image of a tile:
$$M=\max\{\val(v)\mid 
v \textrm{ a vertex of } \sigma(T),\; T\in\mathcal{T}\}.$$
We examine what happens to the valence of 
vertices when passing from
$\sigma^k(T)$ to $\sigma^{k+1}(T)$.
Let $v$ be a vertex of $\sigma^{k+1}(T)$.

\underline{First case}: $v$ is not the image of a 
vertex of $\sigma^k(T)$.
Then, 
\begin{itemize}
\item either $v$ is a vertex in the interior of the
image of a tile of $\sigma^k(T)$,
and in this case, $\val(v)\leq M$;
\item or $v$ is a vertex in the boundary of the image
of a tile of $\sigma^k(T)$.
Since $v$ is not the image of a vertex of 
$\sigma^k(T)$, $v$ is in the image of the interior
of an edge. This edge is common to at most
two tiles of $\sigma^k(T)$.
Hence, $\val(v)\leq 2M$.
\end{itemize}

\underline{Second case}: $v$ is the image of a 
vertex $v_0$ of $\sigma^k(T)$; $v=\sigma(v_0)$.
Then $\val(v)\geq\val(v_0)$. Moreover, the inequality
is strict if and only if 
$v_0$ is a vertex of a face of $\sigma^k(T)$,
for which $v_0$ is a vertex with type in $V_D$. 

\begin{prop}\label{prop:Hvertex}
The following properties are equivalent.
\begin{enumerate}[(i)]
\item \label{BV} The complex $\sigma^\infty(T)$ 
has bounded valence.
\item \label{inf-SD} Every infinite oriented path in 
$\mathcal{V}(\sigma)$ crosses 
only finitely many vertices of $V_D$.
\item \label{cyc-SD} The oriented cycles of 
$\mathcal{V}(\sigma)$ 
do not cross any vertex of $V_D$.
\end{enumerate}
\end{prop}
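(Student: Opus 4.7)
The plan is to establish (ii)$\Leftrightarrow$(iii) by a direct graph-theoretic argument exploiting the finiteness of $\mathcal{V}(\sigma)$ and of $V_D$, and then to establish (i)$\Leftrightarrow$(iii) by quantifying how the valence at a vertex of $\sigma^\infty(T)$ grows under iteration, using the case analysis of new vertices already given in this section.

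For (ii)$\Leftrightarrow$(iii): since $\mathcal{V}(\sigma)$ is finite, a cycle through some $v\in V_D$ can be iterated to yield an infinite oriented path crossing $V_D$ infinitely often. Conversely, any infinite path crossing $V_D$ infinitely often must, by finiteness of $V_D$, revisit some $v\in V_D$, and the segment between two consecutive visits of $v$ is then a cycle through $v$.

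For (iii)$\Rightarrow$(i) the key observation is that under (iii) no oriented path in $\mathcal{V}(\sigma)$ visits a $V_D$-vertex twice (otherwise the segment between two such visits would be a cycle through $V_D$). Thus any oriented path visits $V_D$ at most $|V_D|$ times; since non-$V_D$ vertices have out-degree $1$ and $V_D$-vertices have out-degree at most $M$, the number of oriented paths of length $n$ starting at any vertex of $\mathcal{V}(\sigma)$ is bounded by $M^{|V_D|}$, uniformly in $n$. I would then combine this with the inductive description of $\sigma^\infty(T)$: a vertex $u$ appears for the first time at some stage $k_0$ with valence at most $2M$ (by the case analysis given above), and at subsequent stages iteration of $\sigma$ refines each face corner at $u$ of type $v$ into a family of face corners in bijection with the out-edges of $v$ in $\mathcal{V}(\sigma)$. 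Iterating, the face corners at (the image of) $u$ at stage $k_0+n$ correspond bijectively to oriented paths of length $n$ in $\mathcal{V}(\sigma)$ starting at the types of the initial corners, so the valence of $u$ at stage $k_0+n$ is bounded by $2M\cdot M^{|V_D|}$, a bound independent of $u$ and $n$, which bounds the valence of $u$ in $\sigma^\infty(T)$.

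For $\neg$(iii)$\Rightarrow\neg$(i), suppose a cycle $C$ of length $L$ passes through some $v\in V_D$. Each return to $v$ along $C$ offers at least two distinct choices of continuation, so the number of oriented paths of length $n$ in $\mathcal{V}(\sigma)$ starting at $v$ grows at least like $2^{\lfloor n/L\rfloor}$, and in particular tends to infinity. By the same bijection as above, any vertex $u$ of $\sigma^\infty(T)$ admitting a face corner of type $v$ would have valence at stage $k+n$ growing without bound, contradicting (i). The main obstacle, and the core of this direction of the proof, is to exhibit such a vertex $u$: one must verify that the cyclic type $v$ is actually realised as a face corner in the inflation $\sigma^\infty(T)$. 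This requires restricting $\mathcal{V}(\sigma)$ to the subgraph of types reachable from the vertex types of the initial tile $T$; once this restriction is made, a cycle through $V_D$ in the restricted graph forces the tile containing $v$ to appear recurrently in $\sigma^k(T)$, furnishing the desired vertex $u$.
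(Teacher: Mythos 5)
Your proof is correct and takes essentially the same route as the paper's (which simply refers the equivalence (i)$\Leftrightarrow$(ii) to the case analysis of valence growth preceding the proposition and calls (ii)$\Leftrightarrow$(iii) an elementary fact about finite oriented graphs): you merely make explicit the correspondence between face corners at a vertex and oriented paths in $\mathcal{V}(\sigma)$, and count paths using the branching at $V_D$. The realizability caveat you raise at the end --- that a cycle through $V_D$ only forces unbounded valence if its vertex types actually occur as corners in $\sigma^\infty(T)$, so that strictly one should restrict $\mathcal{V}(\sigma)$ to the types reachable from those of $T$ --- is a legitimate point that the paper's two-line proof passes over in silence, and your proposed fix is the right one.
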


\begin{proof}
The equivalence of conditions (\ref{BV}) and (\ref{inf-SD})
follows from the discussion preceding the Proposition.
The equivalence of conditions (\ref{inf-SD}) and (\ref{cyc-SD})
is an elementary fact about finite oriented graphs.
\end{proof}

\begin{rem}
Proposition \ref{prop:Hvertex} gives
an algorithm to check whether a given topological substitution
generates a complex with bounded valence.
It is sufficient to build the graph $\mathcal{V}(\sigma)$,
and then to check that the (finite number of) 
elementary oriented cycles do not cross $V_D$.
\end{rem}

\begin{figure}[ht]
\begin{center} 
\includegraphics[width= 13cm]{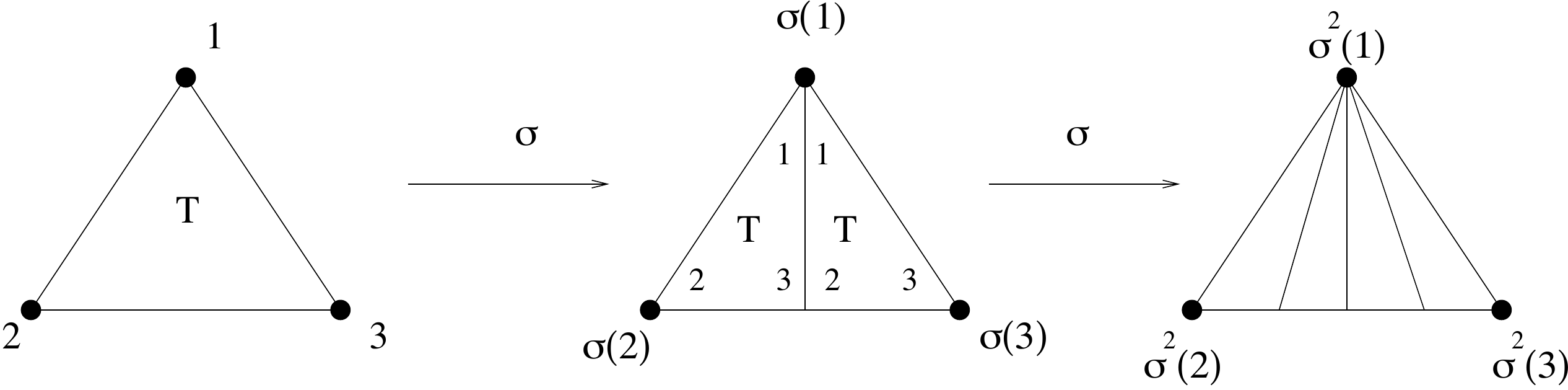}
\end{center}
\caption{The vertex $1$ of the tile $T$ of the above substitution is a divided vertex. The valence of $\sigma^k(1)$ in $\sigma^k(T)$ is $2^k+1$}\label{fig:val-expl}
\end{figure}

\subsection{Configuration graph of vertices $\mathcal{C}(\sigma)$}

Let $(\mathcal{T},\sigma(\mathcal{T}),\sigma)$ be a topological susbtitution  
such that $\sigma^{\infty}(T)$ has bounded valence.
One can be interested in understanding more precisely the evolution
of the neighborhood of the vertices when iterating $\sigma$.
For that, we introduce a new graph, which is called
the {\bf configuration graph of vertices} and denoted by 
$\mathcal{C}(\sigma)$.

We consider the equivalence relation on $k$-tuples ($k\in\N$) generated by:
$$(x_1,\dots,x_k)\sim(x_k,x_1,\dots,x_{k-1}),$$
$$(x_1,\dots,x_k)\sim(x_k,x_{k-1},\dots,x_1).$$
Let $[x_1,\dots,x_k]$ denote the equivalence class of $(x_1,\dots,x_k)$.

Let $W$ be the set of equivalence classes of $k$-tuples with $2\leq k\leq K$,
where $$K=\displaystyle\max_{\text{v vertex of}\; \sigma^{\infty}(T)} val(v).$$ 
A vertex $v$ in the interior of a patch $\sigma^n(T), (n\geq 1,T\in\mathcal{T})$ 
defines an element $[x_1,\dots,x_k]\in W$ 
(where $k$ is the valence of $v$). 
Indeed, the faces adjacent to $v$ are cyclically ordered, 
and $x_i$ is the type of the vertex of the $i$th face which is glued on $v$. 
We define the oriented graph $\mathcal{C}(\sigma)$ as follows.
The set of vertices of $\mathcal{C}(\sigma)$ is the subset $W_0$ of $W$ defined 
by the vertices in the interior of some $\sigma^n(T)$ for $n\geq 1, T\in\mathcal{T}$. 

For any $s\in W_0$, we choose some $T\in \mathcal{T}, n\geq 1$ and $v$ a vertex in the interior of $\sigma^n(T)$ which defines $s$. 
Let $s'$ the element of $W_0$ defined by $\sigma(v)$.
There is an oriented edge in $\mathcal{C}(\sigma)$ from $s$ to $s'$. 
Remark that this construction does not depend of the choice of $T$ and $n$.


\section{Substitutive tilings of $\mathbb{H}^2$}\label{sec:subst-pav}

In this section we study two examples of topological substitutions. The first one does not lead to a tiling of the plane $\Hyp^2$, but it gives rise to a tiling of an unbounded convex subset of $\Hyp^2$.The second one gives rise to a substitutive tiling of $\Hyp^2$.
\subsection{A first attempt: the substitution $\alpha$}

We consider the pre-substitution $\alpha$ defined on Figure~\ref{fig:hpara}.

\begin{figure}[ht!]
\begin{center}
\includegraphics[width= 8cm]{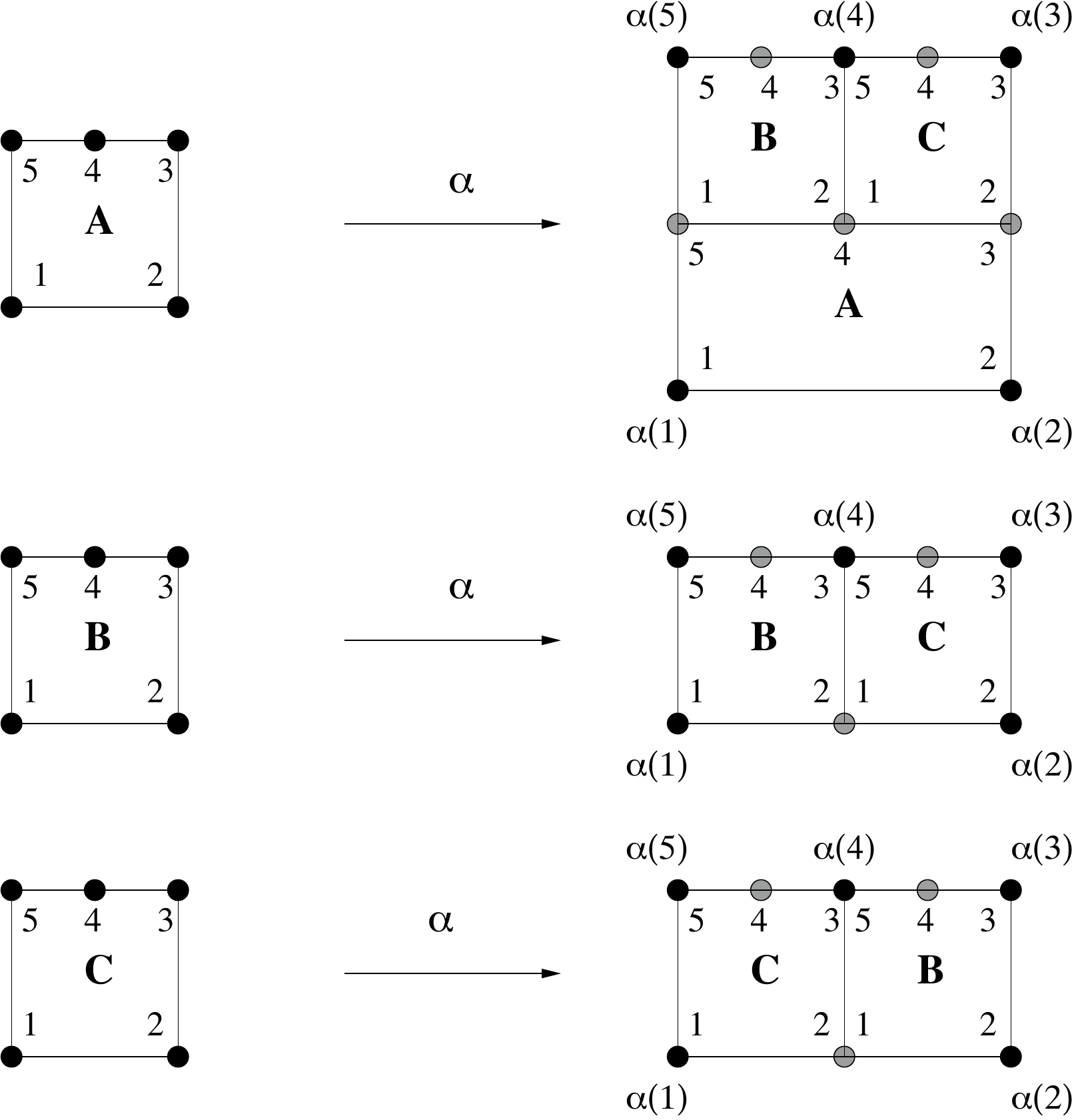}
\end{center}
\caption{Definition of the substitution $\alpha$}
\label{fig:hpara}
\end{figure}

The compatibility of $\alpha$ is checked by computing the heredity
graph of edges of $\alpha$: see Figure~\ref{fig:hpara-edge}.
\begin{figure}[ht!]
\begin{center}
$\xymatrix{
& a_{34}c_{21}\ar@(ur,ul)[d]\ar[dl] & &\\
c_{34}b_{21} \ar@(ul,ur)[d]\ar@/^-0.5pc/[dr]
& c_{45}c_{21} \ar[l] \ar@(ur,u)[] 
& b_{23}c_{15}\ar@(ul,ur)[d] & b_{15}c_{23}\ar@(ul,ur)[d] \\
b_{45}b_{21} \ar[r] \ar@(ul,u)[] 
& b_{34}c_{21}\ar[u] \ar@/^-0.5pc/[ul] & 
c_{23}c_{15} \ar@(ul,ur)[u] & b_{15}c_{23} \ar@(ul,ur)[u] \\
a_{45}b_{21}\ar[u]\ar[ur] & &&
}$
\caption{The heredity graph of edges $\mathcal{E}(\alpha)$}\label{fig:hpara-edge}
\end{center}
\end{figure}
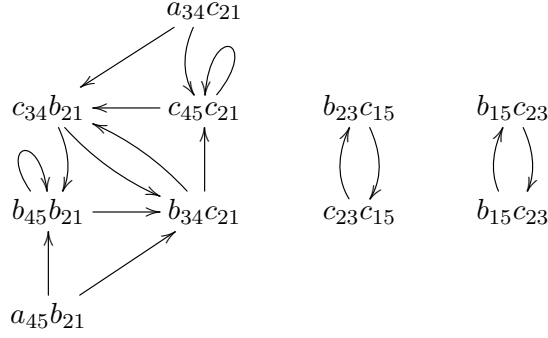

\begin{figure}[b]
\begin{center}
$\xymatrix{
a_4c_1b_2 \ar[d] & c_4b_1c_2 \ar[d] & b_4c_1b_2 \ar[d] \ar[d] \\
b_3c_5c_1c_2 \ar@(ul,ur)[d] & c_3b_5b_1b_2 \ar@(ul,ur)[d] & b_3c_5c_1c_2 \ar@(ul,ur)[d]  \\
c_3c_5c_1b_2 \ar@(ul,ur)[u] & b_3b_5b_1c_2 \ar@(ul,ur)[u] & 
c_3c_5c_1b_2 \ar@(ul,ur)[u] 
}$
\caption{The configuration graph of vertices $\mathcal{C}(\alpha)$}\label{fig:hpara-val}
\end{center}
\end{figure}
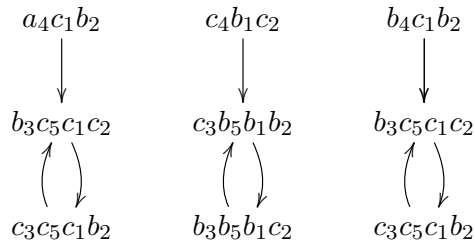

\begin{figure}[ht!]
\begin{center}
\begin{tikzpicture}[scale=.5]
\draw (-22,0)--(10,0);
\draw (-2,.5)--(-2,2);
\draw (-6,.5)--(-6,4);
\draw (-10,.5)--(-10,2);
\draw (-14,.5)--(-14,4);
\draw (2,.5)--(2,4);
\draw (10,.5)--(10,4);
\draw (10,.5)--(10,2);
\draw (6,.5)--(6,2);

\draw (0,.5)--(0,1);
\draw (4,.5)--(4,1);
\draw (8,.5)--(8,1);
\draw (-4,.5)--(-4,1);
\draw (-8,.5)--(-8,1);
\draw (-12,.5)--(-12,1);

\draw (2,1) arc(45:135:1.414);
\draw (4,1) arc(45:135:1.414);
\draw (6,1) arc(45:135:1.414);
\draw (8,1) arc(45:135:1.414);
\draw (10,1) arc(45:135:1.414);
\draw (0,1) arc(45:135:1.414);
\draw (-2,1) arc(45:135:1.414);
\draw (-4,1) arc(45:135:1.414);
\draw (-6,1) arc(45:135:1.414);
\draw (-8,1) arc(45:135:1.414);
\draw (-10,1) arc(45:135:1.414);
\draw (-12,1) arc(45:135:1.414);
\draw (-14,1) arc(45:135:1.414);
\draw (-16,1) arc(45:135:1.414);
\draw (-18,1) arc(45:135:1.414);
\draw (-20,1) arc(45:135:1.414);

\draw (1,0.5) arc(45:135:.707);
\draw (2,0.5) arc(45:135:.707);
\draw (3,0.5) arc(45:135:.707);
\draw (4,0.5) arc(45:135:.707);
\draw (5,0.5) arc(45:135:.707);
\draw (6,0.5) arc(45:135:.707);
\draw (7,0.5) arc(45:135:.707);
\draw (8,0.5) arc(45:135:.707);
\draw (9,0.5) arc(45:135:.707);
\draw (10,0.5) arc(45:135:.707);
\draw (0,0.5) arc(45:135:.707);
\draw (-1,0.5) arc(45:135:.707);
\draw (-2,0.5) arc(45:135:.707);
\draw (-3,0.5) arc(45:135:.707);
\draw (-4,0.5) arc(45:135:.707);
\draw (-5,0.5) arc(45:135:.707);
\draw (-6,.5) arc(45:135:.707);
\draw (-7,0.5) arc(45:135:.707);
\draw (-8,0.5) arc(45:135:.707);
\draw (-9,0.5) arc(45:135:.707);
\draw (-10,0.5) arc(45:135:.707);
\draw (-12,0.5) arc(45:135:.707);
\draw (-11,0.5) arc(45:135:.707);
\draw (-13,0.5) arc(45:135:.707);
\draw (-15,0.5) arc(45:135:.707);
\draw (-17,0.5) arc(45:135:.707);
\draw (-14,0.5) arc(45:135:.707);
\draw (-16,0.5) arc(45:135:.707);
\draw (-18,0.5) arc(45:135:.707);
\draw (-19,0.5) arc(45:135:.707);
\draw (-20,0.5) arc(45:135:.707);
\draw (-21,0.5) arc(45:135:.707);

\draw (-10,2) arc(45:135:2.828);
\draw (2,2) arc(45:135:2.828);
\draw (-2,2) arc(45:135:2.828);
\draw (6,2) arc(45:135:2.828);
\draw (10,2) arc(45:135:2.828);
\draw (-6,2) arc(45:135:2.828);
\draw (-14,2) arc(45:135:2.828);
\draw (-18,2) arc(45:135:2.828);

\draw[densely dotted] (-22,0)--(-22,.5);
\draw[densely dotted] (-20,0)--(-20,.5);
\draw[densely dotted] (-18,0)--(-18,.5);
\draw[densely dotted] (-16,0)--(-16,.5);
\draw[densely dotted] (-14,0)--(-14,.5);
\draw[densely dotted] (-12,0)--(-12,.5);
\draw[densely dotted] (-10,0)--(-10,.5);
\draw[densely dotted] (-8,0)--(-8,.5);
\draw[densely dotted] (-6,0)--(-6,.5);
\draw[densely dotted] (-4,0)--(-4,.5);
\draw[densely dotted] (-2,0)--(-2,.5);
\draw[densely dotted] (0,0)--(0,.5);
\draw[densely dotted] (4,0)--(4,.5);
\draw[densely dotted] (6,0)--(6,.5);
\draw[densely dotted] (8,0)--(8,.5);
\draw[densely dotted] (2,0)--(2,.5);
\draw[densely dotted] (10,0)--(10,.5);
\draw[densely dotted] (-2,0)--(-2,.5);
\draw[densely dotted] (-13,0)--(-13,.5);
\draw[densely dotted] (-11,0)--(-11,.5);
\draw[densely dotted] (-9,0)--(-9,.5);
\draw[densely dotted] (-7,0)--(-7,.5);
\draw[densely dotted] (-5,0)--(-5,.5);
\draw[densely dotted] (-3,0)--(-3,.5);
\draw[densely dotted] (-1,0)--(-1,.5);
\draw[densely dotted] (1,0)--(1,.5);
\draw[densely dotted] (3,0)--(3,.5);
\draw[densely dotted] (5,0)--(5,.5);
\draw[densely dotted] (7,0)--(7,.5);
\draw[densely dotted] (9,0)--(9,.5);

\draw (10,6) arc(45:135:11.313);
\draw (-6,6) arc(45:135:11.313);

\draw (10,.5)--(10,10);
\draw (-6,6)--(-6,.5);
\draw (-22,10)--(-22,.5);
\draw[densely dotted] (-22,0)--(-22,.5);

\draw (-18,.5)--(-18,2);
\draw (-16,.5)--(-16,1);
\draw (-20,.5)--(-20,1);
\draw[densely dotted] (-17,0)--(-17,.5);
\draw[densely dotted] (-19,0)--(-19,.5);
\draw[densely dotted] (-21,0)--(-21,.5);
\draw[densely dotted] (-15,0)--(-15,.5);
\draw[densely dotted] (-12,0)--(-12,.5);

\draw (2,4) arc(45:135:5.656);
\draw (10,4) arc(45:135:5.656);
\draw (-6,4) arc(45:135:5.656);
\draw (-14,4) arc(45:135:5.656);

\draw (-6,9) node{\tiny{A}};

\draw (2,7) node{\tiny{C}};
\draw (-14,7) node{\tiny{B}};

\draw (-18,4) node{\tiny{B}};
\draw (-2,4) node{\tiny{C}};
\draw (6,4) node{\tiny{B}};
\draw (-10,4) node{\tiny{C}};

\draw (-12,2) node{\tiny{C}};
\draw (-8,2) node{\tiny{B}};
\draw (-4,2) node{\tiny{C}};
\draw (0,2) node{\tiny{B}};
\draw (-20,2) node{\tiny{B}};
\draw (-16,2) node{\tiny{C}};
\draw (4,2) node{\tiny{B}};
\draw (8,2) node{\tiny{C}};

\draw (-21,1) node{\tiny{B}};
\draw (-19,1) node{\tiny{C}};
\draw (-15,1) node{\tiny{B}};
\draw (-17,1) node{\tiny{C}};
\draw (-11,1) node{\tiny{B}};
\draw (-13,1) node{\tiny{C}};
\draw (-9,1) node{\tiny{B}};
\draw (-7,1) node{\tiny{C}};

\draw (-5,1) node{\tiny{C}};
\draw (-3,1) node{\tiny{B}};
\draw (-1,1) node{\tiny{B}};
\draw (1,1) node{\tiny{C}};
\draw (3,1) node{\tiny{B}};
\draw (5,1) node{\tiny{C}};
\draw (7,1) node{\tiny{C}};
\draw (9,1) node{\tiny{B}};

\end{tikzpicture}
\end{center}
\caption{Geometric realization of the complex obtained by iteration of 
the topological substitution $\alpha$}\label{fig:penta-tiling}
\end{figure}

For all $k\in\N$, the cores of $\alpha^k(B)$ and $\alpha^k(C)$
are empty. For $k\geq 3$, the core of $\alpha^k(A)$ is nonempty,
but there are only faces of type $B$ and $C$ in it. 
Since there is a face of type $A$ in $\alpha(A)$
(even if it is not in the core of $\alpha(A)$),
we can obtain a 2-complex as the increasing union of the $\alpha^k(A)$
as in Section~\ref{sec:inflation}:
$$\alpha^\infty(A)=\bigcup_{k=0}^\infty\alpha^k(A).$$
But $\alpha^\infty(A)$ is not homeomorphic to the plane.

Nevertheless, $\alpha^\infty(A)$ can be realized as a tiling of a convex subset of 
the hyperbolic plane $\Hyp^2$, as explained in the following.
For now, we use the half-plane model for $\Hyp^2$:
$$\Hyp^2=\{z\in\C: \im(z)>0\}.$$
We consider the points 
$t_1=1+2i$, $t_2=2i$, $t_3=i$, $t_4=1/2+i$, $t_5=1+i$ in $\Hyp^2$, 
and the pentagon $T$ obtained by joining $t_j$ to $t_{j+1}$ 
($j\in\{1,\dots,5\}$ modulo 5) by geodesic arcs.
Let $\Gamma$ be the subgroup of $\isom(\Hyp^2)$ generated by the 
maps $z\mapsto z/2$ and $z\mapsto z+1$.
The hyperbolic plane $\Hyp^2$ is tiled by the set 
$\{g T:g\in\Gamma\}$.
This example of tiling is often attributed to Penrose
and has been fruitfully used by several authors;
for instance by Goodman-Strauss \cite{Good.05}, or Petite \cite{Pet.06}.

Let $Q\subseteq \mathbb{H}^2$ be the part of $\mathbb{H}^2$ between the line 
$Re(z)=0, Re(z)=1$ and under the geodesic joining $t_1, t_2$. 
We remark that the intersection of a tile $T'$ of the preceding tiling 
and $Q$ is either the empty set or the tile $T'$: the tiling restricts to $Q$.
It can be easily checked, using the configuration graph of vertices $\mathcal{C}(\alpha)$ (see Figure \ref{fig:hpara-val}), that the complex induced by the tiling of $Q$ is isomorphic 
(as a $2$-complex) to the unlabelled complex $\alpha^{\infty}(A)$. 
Hence $\alpha^\infty(A)$ can be realized as this tiling of $Q$ by taking $T$ 
as the geometric realization of each tile $A$, $B$ or $C$  
(consistently with respect to the numbering of the vertices)
-- see Figure~\ref{fig:penta-tiling}.

One can easily derive from $Q$ a tiling of the whole plane $\Hyp^2$
in the following way. Consider a pentagon in $Q$ labelled by $B$. 
The tiling restricts to the part of the hyperbolic plane below this pentagon
(i.e. the part containing this pentagon, and bounded by the segment $b_{12}$ 
and the half lines containing $b_{23}$ and $b_{15}$):
we call this set a band.
For all pentagons in $Q$ labelled by $B$, the corresponding bands
are tiled in the same way: there exists an element $g\in\isom(\Hyp^2)$
which send isometrically one band to the other one, and any tile of the band
on a tile with the same label.
A given band $Q_1$ contains, in its interior, a tile labelled by $B$,
and thus the corresponding band, that we denote by $Q_0$.
Considering that $Q_1$ plays the role of $Q_0$, and so on,
we can produce a chain:
$Q_0\subseteq Q_{1}\subseteq \dots \subseteq Q_{k-1}\subseteq Q_k$
for any $k\geq 0$. The union $\bigcup_{k\geq 0} Q_k$ is a tiling of $\Hyp^2$.

\subsection{A substitutive tiling of $\mathbb{H}^2$: the substitution $\beta$}
\label{subsec:subst-pav}

For the tilings of $\mathbb{H}^2$ obtained in the previous example the point $\infty$ plays a particular role. In some sense we can think there is a source at $\infty$ which generates the tiling. To obtain a substitutive hyperbolic tiling, the idea is to move the source from $\infty$ to a point inside $\Hyp^2$.

\begin{figure}[ht!]
\begin{center}
\includegraphics[width= 8cm]{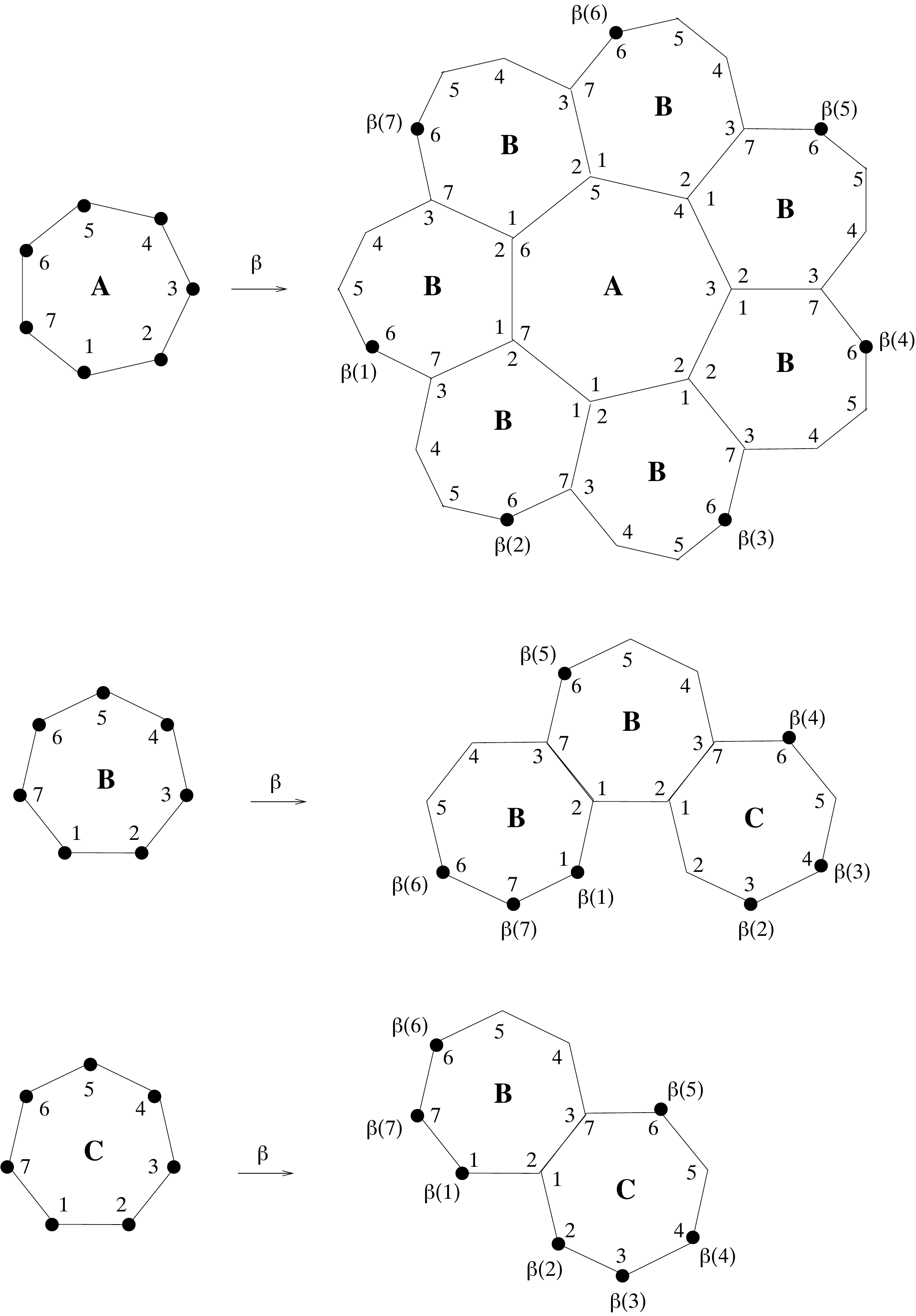}

\end{center}
\caption{Example of a non-primitive topological substitution $\beta$}\label{fig:hepta}
\end{figure}

For that, we consider the pre-substitution $\beta$ defined on Figure~\ref{fig:hepta}.
The compatibility of $\beta$ is checked by computing the heredity
graph of edges of $\beta$: see Figure~\ref{fig:hepta:edge}. 

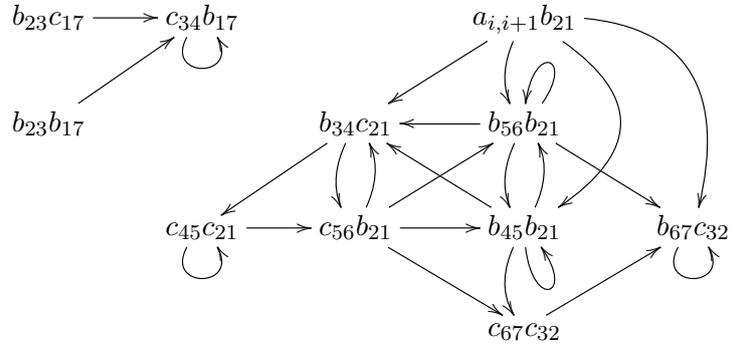
\begin{figure}[ht!]
\begin{center}
$\xymatrix{
b_{23}c_{17}\ar[r] & c_{34}b_{17}\ar@(dl,dr)[] & & a_{i,i+1}b_{21}\ar[dl]\ar@(ur,ul)[d]\ar@/^3pc/[ddr]\ar@/^3pc/[dd] & \\
b_{23}b_{17}\ar[ur] & & b_{34}c_{21}\ar[dl]\ar@(ur,ul)[d] & b_{56}b_{21}\ar[dr]\ar@(ur,u)[]\ar@(ur,ul)[d]\ar[l] &\\
& c_{45}c_{21}\ar[r]\ar@(dl,dr)[] & c_{56}b_{21}\ar[r]\ar[ur]\ar[dr]\ar@(dl,dr)[u] & b_{45}b_{21}\ar@(d,dr)[]\ar@(ur,ul)[d]\ar@(dl,dr)[u]\ar[lu] & b_{67}c_{32}\ar@(dl,dr)[]\\
 & & & c_{67}c_{32}\ar[ur] &
}$
\caption{$\mathcal{E}(\beta)$ for the heptagonal substitution}\label{fig:hepta:edge}
\end{center}
\end{figure}

\begin{figure}[ht!]
\begin{center}
$ \xymatrix{ 
a_ib_1b_2\ar[d]&b_5b_1b_2\ar[dl]\\
b_6b_1c_3\ar@(dl,dr)&c_6b_1c_3\ar[l]&c_5c_1b_2\ar[l]\\
&b_4c_1b_2\ar[u]
}$
\hspace{0.3cm}
$\xymatrix{
b_3b_7c_2\ar[d]\\
c_4b_7c_2\ar@(dl,dr)&b_3c_7c_2\ar[l]
}$
\caption{$\mathcal{C}(\beta)$ for the heptagonal tiling}\label{fig:val3}
\end{center}
\end{figure}
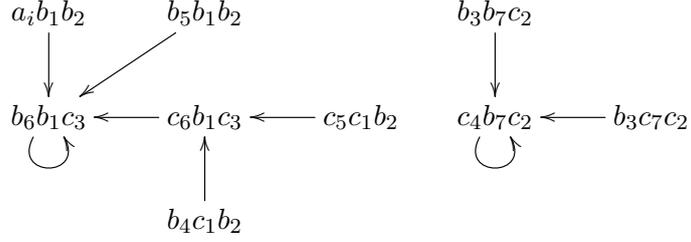

The core of $\beta(A)$ consists of a tile of type $A$. 
The pre-substitution $\beta$ is thus a substitution, 
and we can consider the complex $\beta^\infty(A)$.
This complex has the following properties: 
all the faces are heptagons, and the configuration graph of vertices  $\mathcal{C}(\beta)$ (see Figure \ref{fig:val3}) shows that each vertex is of valence $3$. 
Thus the unlabelled complex can be geometrized as a tiling 
by regular heptagons with angle $\frac{2\pi}{3}$, which proves:

\begin{thm}\label{thm:nprim}
There exists a (non-primitive) substitutive tiling of the 
hyperbolic plane $\Hyp^2$. The complex associated to this tiling is obtained by inflation from the topological substitution $\beta$.
\end{thm}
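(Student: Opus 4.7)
The plan is to exhibit the explicit non-primitive topological substitution $\beta$ of Figure~\ref{fig:hepta} and show that the inflated complex $\beta^\infty(A)$ is combinatorially isomorphic to the regular $\{7,3\}$-tiling of $\Hyp^2$. The skeleton of the argument has four steps: (1) check that $\beta$ is compatible; (2) check the core property, so that $\beta$ is an actual topological substitution and the complex $\beta^\infty(A)$ exists and is homeomorphic to $\R^2$; (3) verify that each face is a heptagon and each vertex is trivalent; (4) realize the resulting unlabelled CW-structure metrically by regular hyperbolic heptagons with angles $\frac{2\pi}{3}$.

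First, I would run the algorithm of Section~\ref{subsect-heredity}: compute the sets $W_p$ of interior-edge types occurring in $\beta^p(T_i)$ and their descendants, checking balancedness at each step, until the set $V_p$ stabilises. The terminating graph $\mathcal{E}(\beta)$ is displayed in Figure~\ref{fig:hepta:edge}; inspection shows that every vertex of $\mathcal{E}(\beta)$ is balanced, so by induction $\beta$ is $p$-compatible for every $p$, hence compatible. Next, from the definition of $\beta$ the patch $\beta(A)$ contains a face of type $A$ in its core, so $\beta$ has the core property and is a genuine topological substitution. By Section~\ref{sec:inflation}, $\beta^\infty(A) = \bigcup_{k\geq 0} \beta^k(A)$ is then a $2$-complex homeomorphic to $\R^2$.

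Third, I need to control the combinatorics of this complex. Heptagonality of every face is immediate: the three tile types $A, B, C$ are all heptagons, so every face of $\beta^\infty(A)$ is a heptagon. For the valence, I would first argue that it suffices to check vertices in the interior of some $\beta^n(T_i)$, since $\beta^\infty(A)$ is the nested union of such patches and every vertex is eventually interior. Then I would compute the configuration graph of vertices $\mathcal{C}(\beta)$ as in Section~\ref{sec:bound}: the outcome is Figure~\ref{fig:val3}, in which every equivalence class appearing has length $k=3$. Since $\mathcal{C}(\beta)$ describes how local vertex configurations evolve under $\beta$, this yields that every interior vertex of $\beta^\infty(A)$ has valence exactly $3$.

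Finally, the unlabelled complex $\beta^\infty(A)$ is a $2$-complex homeomorphic to $\R^2$ in which each face is a heptagon and each vertex has valence $3$: its abstract isomorphism type is therefore that of the $\{7,3\}$-tiling of $\Hyp^2$ by regular hyperbolic heptagons of angle $\frac{2\pi}{3}$. Realising each prototile $A, B, C$ as such a heptagon (with the vertex labelling induced by the substitution) produces a geometric realisation $\lfloor \beta^\infty(A)\rfloor$ isometric to $\Hyp^2$, and hence a substitutive tiling of $\Hyp^2$. Non-primitivity is evident from $\beta$: neither $B$ nor $C$ ever produces a tile of type $A$, so the transition matrix has no power with all entries positive. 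The main obstacle is the case-analysis bookkeeping: building $\mathcal{E}(\beta)$ and $\mathcal{C}(\beta)$ correctly from the picture of $\beta$ requires a careful and systematic enumeration of adjacencies and vertex neighbourhoods, and is the place where one must be most vigilant, but once these two finite graphs are in hand the rest of the argument is essentially automatic.
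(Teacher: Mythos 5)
Your proposal follows essentially the same route as the paper: verify compatibility via the heredity graph of edges $\mathcal{E}(\beta)$, observe that the core of $\beta(A)$ contains a face of type $A$ so that $\beta^\infty(A)$ exists, use the configuration graph $\mathcal{C}(\beta)$ to see that every vertex is trivalent, and geometrize the resulting heptagonal trivalent complex as the regular $\{7,3\}$ tiling of $\Hyp^2$. The paper's proof is exactly this argument, so no further comparison is needed.
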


\begin{figure}[ht!]
\begin{center}
\includegraphics[width=9cm]{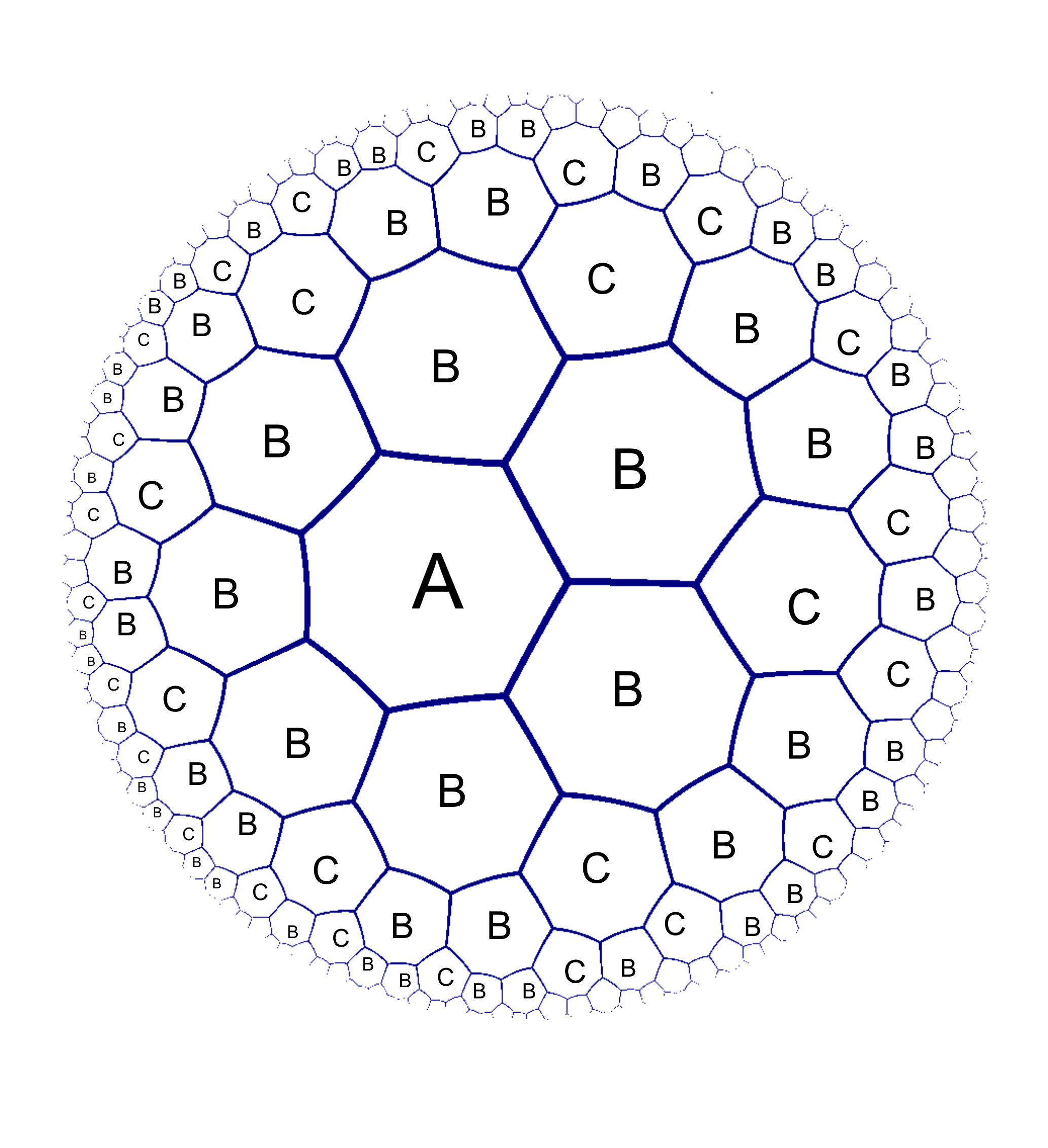}
\end{center}
\caption{Geometric realization of the complex obtained by iteration of the topological substitution $\beta$}\label{fig:hepta-tiling}
\end{figure}

This tiling is pictured in Figure~\ref{fig:hepta-tiling}.
The iteration of the  substitution on the heptagons $B, C$ 
give rise to complexes with empty core. 
Roughly speaking one can describe what happens as follows. The tiling of 
$\Hyp^2$ by heptagons is a grid and we are going to label each 
heptagon by $A, B$ or $C$.
First, we chose an heptagon, and label it by $A$. 
Applying $\beta$ will add a label $B$ on each heptagon
surrounding the one labelled by $A$.
Then, each iteration of $\beta$ is going to add a label $B$ or $C$ on each tile of the annulus surrounding the previous picture. 

\section{Appendix}
We include here a proof that the map $d$ defined in Section~\ref{sec:dynamics} is a distance. We use the notations of Section~\ref{sec:dynamics}. We mainly follow \cite[Lemma 2.7]{Robin.04} to prove the triangle inequality.

\begin{lem}\label{lem:pseudonorme}
Let $g,h\in\isom(M)$, $\alpha,\beta\geq 0$. Then
\begin{enumerate}[(i)]
\item \label{-1} $||g^{-1}||_\alpha=||g||_\alpha$
\item \label{croiss} $\alpha\leq \beta \;\Rightarrow\; ||g||_\alpha\leq ||g||_\beta$
\item \label{inclus}  $||g||_\alpha\leq \beta \;\Rightarrow\; gB_\alpha\supseteq B_{\alpha-\beta}$
\item \label{triang} $||hg||_\alpha\leq ||h||_\alpha+||g||_\alpha$
\end{enumerate}
\end{lem}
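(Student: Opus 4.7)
The four statements should be proved in the order given, each relying on the hypothesis that $g$ and $h$ are isometries (so in particular distance-preserving) together with the triangle inequality in $(M,d)$. I describe the main idea for each item.

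For (i), the key observation is that since $g$ is an isometry, $d(Q, g^{-1}Q) = d(gQ, g\cdot g^{-1}Q) = d(gQ, Q) = d(Q, gQ)$. Hence as $Q$ ranges over $B_\alpha$, the two suprema defining $||g||_\alpha$ and $||g^{-1}||_\alpha$ are taken over the \emph{same} set of values, so they coincide. Item (ii) is then a one-line consequence of the inclusion $B_\alpha \subseteq B_\beta$ and the definition of the supremum.

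The proof of (iii) is slightly more involved. The plan is to take an arbitrary $P \in B_{\alpha-\beta}$ and show that $g^{-1}P$ lies in $B_\alpha$; one then gets $P = g(g^{-1}P) \in gB_\alpha$ as desired. Estimating via the triangle inequality, $d(O, g^{-1}P) \le d(O,P) + d(P, g^{-1}P)$, where the first term is at most $\alpha - \beta$ by hypothesis on $P$, and the second term is bounded by $||g^{-1}||_\alpha$, which equals $||g||_\alpha \le \beta$ using (i) and the assumption $||g||_\alpha \le \beta$. The case $\alpha < \beta$ is vacuous, since by the convention $B_r = \emptyset$ for $r < 0$ we have $B_{\alpha-\beta} = \emptyset$.

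The most delicate point is (iv). The naive strategy of inserting $gQ$ as an intermediate point and bounding $d(gQ, hgQ)$ by $||h||_\alpha$ will fail, because $gQ$ need not lie in $B_\alpha$; one would only get $gQ \in B_{\alpha+||g||_\alpha}$, which does not give the clean inequality claimed. The trick is to insert $hQ$ instead: for $Q \in B_\alpha$, I write
\[
d(Q, hgQ) \;\le\; d(Q, hQ) + d(hQ, hgQ),
\]
and then exploit that $h$ is an isometry to rewrite $d(hQ, hgQ) = d(Q, gQ)$. The two terms on the right are now bounded by $||h||_\alpha$ and $||g||_\alpha$ respectively, and taking the supremum over $Q \in B_\alpha$ yields $||hg||_\alpha \le ||h||_\alpha + ||g||_\alpha$. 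I expect this rearrangement of the triangle inequality — using $hQ$ rather than $gQ$ as the midpoint — to be the only genuinely nontrivial ingredient in the entire lemma.
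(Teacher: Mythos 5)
Your proof is correct and follows essentially the same route as the paper: (i) and (ii) straight from the definition, (iii) by the triangle inequality through $P$ together with (i), and (iv) by inserting $hQ$ (not $gQ$) as the intermediate point and using that $h$ is an isometry. The only cosmetic difference is that the paper proves $gB_{\alpha-\beta}\subseteq B_\alpha$ and then invokes (i), whereas you work directly with $g^{-1}$; these are the same argument.
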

\begin{proof}
Properties (\ref{-1}) and (\ref{croiss}) follow immediately from the definition.
Suppose that $\alpha\geq \beta$ and $||g||_\alpha\leq \beta$, and consider $Q\in B_{\alpha-\beta}$ (if $\alpha<\beta$ Property $(iii)$ is trivial). 
Then 
$$d(gQ,O)\leq d(gQ,Q)+d(Q,O)\leq 
||g||_{\alpha-\beta}+\alpha-\beta
\leq ||g||_{\alpha}+\alpha-\beta
\leq \alpha,$$
which proves $gB_{\alpha-\beta}\subseteq B_{\alpha}$, 
and then, by property (\ref{-1}), the property (\ref{inclus}).
Property (\ref{triang}) follows from
$$d(hgQ,Q)\leq d(hgQ,hQ)+d(hQ,Q)
=d(gQ,Q)+d(hQ,Q)$$
for any point $Q\in B_\alpha$.
\end{proof}

\begin{prop}\label{lem:distance}
The map $d$ is a distance on $\Sigma_{\mathcal{P}}$.
\end{prop}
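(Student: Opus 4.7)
The plan is to verify the three metric axioms (symmetry, separation, triangle inequality) one by one, together with non-negativity, which is immediate since by construction $d(x,y)\in[0,1]$.

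\textbf{Symmetry.} This is immediate from Lemma~\ref{lem:pseudonorme}(i). Given $r\in A(x,y)$ with witnesses $(p,q,g)$, the triple $(q,p,g^{-1})$ witnesses $r\in A(y,x)$, since $g^{-1}q=p$, and $\|g^{-1}\|_{1/r}=\|g\|_{1/r}\leq r$.

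\textbf{Reflexivity and separation.} For $d(x,x)=0$, take $g=\mathrm{Id}$ and let $p=q$ be any patch of $x$ containing $B_{1/r}$; then $\|\mathrm{Id}\|_{1/r}=0\leq r$, so every $r>0$ lies in $A(x,x)$. Conversely, assume $d(x,y)=0$. Choose $r_n\leq 1/n$ in $A(x,y)$ with witnesses $(p_n,q_n,g_n)$. Since $\|g_n\|_n\leq 1/n$, the isometries $g_n$ converge to the identity uniformly on every compact subset of $M$. Fix a tile $T$ of $x$. For $n$ large, $T$ is one of the tiles of $p_n$, hence $g_nT$ is a tile of $y$. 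Because the set of prototiles is finite, tiles have a uniform lower bound on diameter (each contains a ball of fixed radius), so any bounded region of $M$ meets only finitely many tiles of $y$. The sequence $(g_nT)$ therefore takes only finitely many values in a neighborhood of $T$, and since $g_nT\to T$ in the Hausdorff sense, it must be eventually constant equal to $T$. Thus $T$ is a tile of $y$; by symmetry every tile of $y$ is a tile of $x$, so $x=y$.

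\textbf{Triangle inequality.} It suffices to prove that whenever $r_1\in A(x,y)$ and $r_2\in A(y,z)$ with $r_1+r_2\leq 1$, one has $r_1+r_2\in A(x,z)$; taking infima then gives $d(x,z)\leq d(x,y)+d(y,z)$ (the case when the sum exceeds $1$ is trivial). Let $(p,q,g)$ and $(p',q',h)$ be the respective witnesses, and set $k=hg$ and $R=1/(r_1+r_2)$. Combining Lemma~\ref{lem:pseudonorme}(ii) (monotonicity in the radius, using $R\leq 1/r_1$ and $R\leq 1/r_2$) with Lemma~\ref{lem:pseudonorme}(iv) gives
\[
\|k\|_{R}\leq \|h\|_{R}+\|g\|_{R}\leq \|h\|_{1/r_2}+\|g\|_{1/r_1}\leq r_1+r_2,
\]
which is the required bound on the isometry.

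It remains to produce patches $\tilde p\in B_R[x]$ and $\tilde q\in B_R[z]$ with $k\tilde p=\tilde q$. I would define $\tilde p$ to be the subpatch of $p$ consisting of those tiles $T$ whose image $gT$ lies entirely inside $\mathrm{supp}(p')$. Since tiles of the tiling $y$ whose support is contained in $\mathrm{supp}(p')$ are automatically tiles of $p'$, the patch $g\tilde p$ is a subpatch of $p'$, and therefore $\tilde q:=hg\tilde p$ is a subpatch of $hp'=q'$, i.e.\ a genuine patch of $z$. The main obstacle is then to check that $\mathrm{supp}(\tilde p)\supseteq B_R$ and $\mathrm{supp}(\tilde q)\supseteq B_R$. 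For this one uses that $\mathrm{supp}(q)\supseteq gB_{1/r_1}\supseteq B_{1/r_1-r_1}$ and $\mathrm{supp}(p')\supseteq B_{1/r_2}$ (Lemma~\ref{lem:pseudonorme}(iii)), so that tiles of $y$ meeting a well-chosen ball $B_{\rho}$ with $\rho\geq R$ lie simultaneously in $q$ and $p'$; pushing these common tiles forward by $h$ and pulling them back by $g^{-1}$ yields patches of $z$ and $x$ whose supports contain $B_R$, once the inevitable loss of $r_1$, respectively $r_2$, in radius when applying $g^{-1}$ or $h$ is absorbed into the inequality $R\leq 1/r_i-r_i$ (valid since $r_1+r_2\leq 1$). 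Finite prototile set enters here to bound how far tiles can protrude beyond a ball, so no pathology occurs at the boundary.

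The main technical difficulty is this last geometric bookkeeping in the triangle inequality; the two algebraic ingredients (composition bound on $\|hg\|_R$ and upward closure of $A(x,y)$) are easy, and separation is routine once local finiteness of the tiling is invoked.
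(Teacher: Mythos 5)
Your overall route is the same as the paper's: pass to witnesses $(p,q,g)$ and $(p',q',h)$, intersect the two patches of the middle tiling $y$, transport the intersection by $g^{-1}$ and by $h$, and control $\|hg\|_R$ via parts (ii) and (iv) of Lemma~\ref{lem:pseudonorme}. Symmetry and the composition bound are fine, and your separation argument (local finiteness of the tiling plus rigidity of tiles under isometries close to the identity) is a genuine addition, since the paper only asserts that $d(x,y)=0$ implies $x=y$.

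The problem sits exactly at the step you yourself flag as the main difficulty. The inequality $R\le 1/r_i-r_i$ is \emph{not} valid for both indices merely because $r_1+r_2\le 1$: take $r_1=1/2$ and $r_2=1/100$, so that $R=100/51>3/2=1/r_1-r_1$. In general $1/r_i-1/(r_1+r_2)=r_j/(r_i(r_1+r_2))\ge r_j$ for $j\ne i$, so the loss of $r_i$ in radius when applying $g^{-1}$ (resp.\ $h$) can be absorbed only when $r_i$ is the \emph{smaller} of the two tolerances; for the larger index the inequality can fail, and your verification that the corresponding transported patch covers $B_R$ does not go through as written. For comparison, the paper's proof (with $\alpha\le\beta$) verifies only $\supp(p_0)\supseteq B_{1/\gamma}$, using $1/\gamma\le 1/\beta-\alpha$ -- which does hold, precisely because the loss incurred on that side is $\alpha$, the smaller tolerance -- and never checks $r_0\in B_{1/\gamma}[z]$, where the analogous inequality $1/\gamma\le 1/\beta-\beta$ can fail for the same numerical reason. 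So you have not made an error that the paper avoids; rather, you have made explicit, and then dismissed as ``valid since $r_1+r_2\le 1$,'' the one estimate that genuinely does not follow from the stated lemmas and needs a further argument (or a relaxation of the requirement that \emph{both} patches in the definition of $A(x,z)$ cover $B_{1/r}$, since the covering by the second patch is only guaranteed up to a loss of $r$).
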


\begin{proof}
For $x,y\in \Sigma_{\mathcal{P}}$, clearly $d(x,y)=d(y,x)$
and $d(x,y)=0$ implies $x=y$.
We consider three tilings $x,y,z\in \Sigma_{\mathcal{P}}$: 
we are going to prove that $d(x,z)\leq d(x,y)+d(y,z)$.
We can assume that $d(x,y)=a\leq d(y,z)=b$ and $a+b<1$. 
Let $\varepsilon>0$ such that $\varepsilon<\frac{1-(a+b)}{2}$.
We take 
$\alpha=a+\varepsilon,\; \beta=b+\varepsilon,\; \gamma=\alpha+\beta$
(and thus $\gamma<1$). 
There exist
$$p\in B_{1/\alpha}[x], \;q\in B_{1/\alpha}[y],\; q'\in B_{1/\beta}[y], \;
r\in B_{1/\beta}[z], \; g,h\in \Gamma$$ such that
$$gp =q, \; hq'=r, \; ||g||_{1/\alpha}\leq \alpha, \; ||h||_{1/\beta}\leq \beta.$$
Now we define $q_0=q\cap q'$ and $p_0=g^{-1}q_0, \; r_0=hq_0$.
Thus $hgp_0=r_0$. 

By properties (\ref{-1}) and (\ref{croiss}) of Lemma \ref{lem:pseudonorme},
we obtain that: 
$$||g^{-1}||_{1/\beta}= ||g||_{1/\beta} \leq ||g||_{1/\alpha}\leq\alpha.$$
We deduce from property (\ref{inclus}) that 
$g^{-1}B_{1/\beta}\supset B_{1/\beta-\alpha}$.
Moreover, since $\beta,\gamma<1$, we remark that 
$1/\gamma\leq 1/\beta-\alpha.$
Thus $g^{-1}B_{1/\beta}\supset B_{1/\gamma}.$
Since $q_0\in B_{1/\beta}[y]$, we deduce that 
$p_0=g^{-1}q_0\in B_{1/\gamma}[x].$

Finally, using property (\ref{triang}) and (\ref{croiss}) of Lemma \ref{lem:pseudonorme}, 
we obtain that:
$$||hg||_{1/\gamma} \leq ||h||_{1/\gamma} + ||g||_{1/\gamma}
\leq ||h||_{1/\beta} + ||g||_{1/\alpha}
\leq\gamma.$$
This proves that $d(x,z)\leq \gamma\leq d(x,y)+d(y,z)+2\varepsilon$ for 
arbitrarily small $\varepsilon$, and the triangle inequality follows.
\end{proof}
We note that, if $M=\E^2$ and $\Gamma$ is the group of translations of $\E^2$,
the definition of the distance $d$ given below coincides precisely
with the classical one. 

\medskip

We include below a proof, in our context, of Gottschalk's Theorem (Proposition~\ref{prop:repet}). 

\begin{proof}
A tiling $y$ is in $\Omega(x)$ if and only if there exists a sequence 
of tilings $g_n x$ ($g_n\in\Gamma$) converging to $y$. 
It means that for every integer $n$ there exists a patch $Y_n$ of $y$ and a patch $P_n$ of $g_n x$, 
each of whose supports contains a ball of radius $n$ centered at $0$,
and an isometry $g\in \Gamma$ with $||g||\leq 1/n$ such that $Y_n=gP_n$. 

(i) If $y\in \Omega(x)$, any protopatch of $y$ occurs in the support of some $Y_n=gP_n$, thus occurs in $x$. Hence $\mathcal{L}(y)\subseteq \mathcal{L}(x)$. Conversely let $B$ be the ball of radius $n$ centered in $0$. Let $Y_n$ be a patch of $y$, the support of which contains $B$.  By hypothesis the protopatch defined by $Y_n$ is a protopatch of $x$. Thus there exists a patch $P_n$ of $x$ and an element $g_n\in \Gamma$ such that $g_nY_n=P_n$. The sequence of tilings $(g_{n}^{-1}x)$ converges to $y$ by construction.

(ii) Let $\hat{P}$ be a protopatch of $\mathcal{L}(x)$. The repetitivity of $x$ implies there exists some $R>0$ such that the protopatch $\hat{P}$ occurs in every ball of radius bigger than $R$. Thus for $n\geq R$, $\hat{P}$ occurs in the support of $P_n=g^{-1}Y_n$, and hence also occurs in $y$. This proves that $\mathcal{L}(x)=\mathcal{L}(y)$. It follows that $y$ is a repetitive tiling.

(iii) Let $y$ be in $\Omega(x)$. Property $(ii)$ implies that $\mathcal{L}(y)=\mathcal{L}(x)$, and thus by Property $(i)$, $x\in \Omega(y)$. Hence $(\Omega(x), \Gamma)$ is minimal.

(iv) Let $B_n$ be the ball of radius $n$ centered at the origin. Suppose that $x$ is not a repetitive tiling. There exists a protopatch  $\hat{P}\in \mathcal{L}(x)$ such that, for every integer $n$, there exists an element $g_n\in \Gamma$, and a patch $P_n$, the support of which contains $g_nB_n$, such that $\hat{P}$ does not occur in $P_n$. 
By compactness, we can assume that the sequence $g_n^{-1}x$ converges. Let $y$ be this limit tiling, then $\hat{P}$ is not a protopatch of $y$. Thus $\Omega(y)\subsetneq \Omega(x)$, and $(\Omega(x), \Gamma)$ is not a minimal system.

\end{proof}


\bibliographystyle{alpha}
\bibliography{biblio-pav}

\def\ocirc#1{\ifmmode\setbox0=\hbox{$#1$}\dimen0=\ht0 \advance\dimen0
  by1pt\rlap{\hbox to\wd0{\hss\raise\dimen0
  \hbox{\hskip.2em$\scriptscriptstyle\circ$}\hss}}#1\else {\accent"17 #1}\fi}
\begin{thebibliography}{BBG06}

\bibitem[BBG06]{Bel.Ben.Gam.06}
J.~Bellissard, R.~Benedetti, and J.~M. Gambaudo.
\newblock Spaces of tilings, finite telescopic approximations and gap-labeling.
\newblock {\em Comm. Math. Phys.}, 261(1):1--41, 2006.

\bibitem[BH99]{Brid.Haef.99}
Martin~R. Bridson and Andr{\'e} Haefliger.
\newblock {\em Metric spaces of non-positive curvature}, volume 319 of {\em
  Grundlehren der Mathematischen Wissenschaften}.
\newblock Springer-Verlag, Berlin, 1999.

\bibitem[Fra03]{Frank-03}
Natalie~Priebe Frank.
\newblock Detecting combinatorial hierarchy in tilings using derived
  {V}orono\"\i\ tesselations.
\newblock {\em Discrete Comput. Geom.}, 29(3):459--476, 2003.

\bibitem[Fra08]{Priebe.08}
N.~Priebe Frank.
\newblock A primer of substitution tilings of the {E}uclidean plane.
\newblock {\em Expo. Math.}, 26(4):295--326, 2008.

\bibitem[Gro86]{Gro.86}
M.~Gromov.
\newblock Large {R}iemannian manifolds.
\newblock In {\em Curvature and topology of {R}iemannian manifolds ({K}atata,
  1985)}, volume 1201 of {\em Lecture Notes in Math.}, pages 108--121.
  Springer, Berlin, 1986.

\bibitem[GS05]{Good.05}
C.~Goodman-Strauss.
\newblock A strongly aperiodic set of tiles in the hyperbolic plane.
\newblock {\em Invent. Math.}, 159(1):119--132, 2005.

\bibitem[Hat02]{Hat.02}
A.~Hatcher.
\newblock {\em Algebraic topology}.
\newblock Cambridge University Press, Cambridge, 2002.

\bibitem[K{\ocirc{u}}r03]{Kurk.03}
P.~K{\ocirc{u}}rka.
\newblock {\em Topological and symbolic dynamics}, volume~11 of {\em Cours
  Sp\'ecialis\'es}.
\newblock Soci\'et\'e Math\'ematique de France, Paris, 2003.

\bibitem[Pen84]{Penr.84}
R.~Penrose.
\newblock Pentaplexity: a class of nonperiodic tilings of the plane.
\newblock In {\em Geometrical combinatorics (Milton {K}eynes, 1984)}, volume
  114 of {\em Res. Notes in Math.}, pages 55--65. Pitman, Boston, MA, 1984.

\bibitem[Pet06]{Pet.06}
S.~Petite.
\newblock On invariant measures of finite affine type tilings.
\newblock {\em Ergodic Theory Dynam. Systems}, 26(4):1159--1176, 2006.

\bibitem[PF02]{Pyth.02}
N.~Pytheas~Fogg.
\newblock {\em Substitutions in dynamics, arithmetics and combinatorics},
  volume 1794 of {\em Lecture Notes in Mathematics}.
\newblock Springer-Verlag, Berlin, 2002.
\newblock Edited by V. Berth{\'e}, S. Ferenczi, C. Mauduit and A. Siegel.

\bibitem[Pre98]{Prev-98}
Joseph~P. Previte.
\newblock Graph substitutions.
\newblock {\em Ergodic Theory Dynam. Systems}, 18(3):661--685, 1998.

\bibitem[Rob04]{Robin.04}
E.~A. Robinson, Jr.
\newblock Symbolic dynamics and tilings of {$\Bbb R\sp d$}.
\newblock In {\em Symbolic dynamics and its applications}, volume~60 of {\em
  Proc. Sympos. Appl. Math.}, pages 81--119. Amer. Math. Soc., Providence, RI,
  2004.

\bibitem[Sad08]{Sad.08}
Lorenzo Sadun.
\newblock {\em Topology of tiling spaces}, volume~46 of {\em University Lecture
  Series}.
\newblock American Mathematical Society, Providence, RI, 2008.

\bibitem[Sen06]{Sen.06}
E.~Seneta.
\newblock {\em Non-negative matrices and {M}arkov chains}.
\newblock Springer Series in Statistics. Springer, New York, 2006.
\newblock Revised reprint of the second (1981) edition [Springer-Verlag, New
  York; MR0719544].

\bibitem[Sol97]{Solom.97}
Boris Solomyak.
\newblock Dynamics of self-similar tilings.
\newblock {\em Ergodic Theory Dynam. Systems}, 17(3):695--738, 1997.

\bibitem[Sol99]{Solom.99}
Boris Solomyak.
\newblock Corrections to: ``{D}ynamics of self-similar tilings'' [{E}rgodic
  {T}heory {D}ynam.\ {S}ystems {\bf 17} (1997), no.\ 3, 695--738; {MR}1452190
  (98f:52030)].
\newblock {\em Ergodic Theory Dynam. Systems}, 19(6):1685, 1999.

\end{thebibliography}

\end{document}